\def\d{\delta}
\def\C{\mathbb{C}}
\def\c2{\mathbb{C}^2}
\def\R{\mathbb{R}}
\def\N{\mathbb{N}}
\def\P{\mathbb{P}}
\def\1{\mathbf{1}}
\def\a{\alpha}
\def\b{\beta}
\def\e{\varepsilon}
\def\l{\lambda}
\def\f{\varphi}
\def\p{\psi}
\def\r{\varrho}
\def\om{\omega}
\newtheorem{lem}{Lemma}[section]
\newtheorem{pro}[lem]{Proposition}
\newtheorem{defi}[lem]{Definition}
\newtheorem{def/not}[lem]{Definition/Notations}
\newtheorem{thm}[lem]{Theorem}
\newtheorem{ques}[lem]{Question}
\newtheorem{cor}[lem]{Corollary}
\newtheorem{rqe}[lem]{Remark}
\newtheorem{exa}[lem]{Example}
\newtheorem{exas}[lem]{Examples}
\begin{document}

\title[Definition of the complex Monge-Amp\`ere operator]
{Domains of definition of Monge-Amp\`ere operators on compact
K\"ahler manifolds}

\author{DAN COMAN, VINCENT GUEDJ, and AHMED ZERIAHI}
\thanks{Dan Coman was partially supported by the NSF Grant DMS
0500563.} \subjclass[2000]{Primary: 32W20. Secondary: 32U15,
32Q15.}
\address{Dan Coman: Department of Mathematics, 215 Carnegie Hall,
Syracuse University, Syracuse, NY 13244-1150, USA. dcoman@syr.edu}
\address{Vincent Guedj and Ahmed Zeriahi: Laboratoire Emile
Picard, UMR 5580, Universit\'e Paul Sabatier, 118 route de
Narbonne, 31062 TOULOUSE Cedex 04, FRANCE.
guedj@picard.ups-tlse.fr, zeriahi@picard.ups-tlse.fr}

\begin{abstract} Let $(X,\omega)$ be a compact K\"ahler manifold.
We introduce and study the largest set $DMA(X,\omega)$ of
$\omega$-plurisubharmonic (psh) functions on which the complex
Monge-Amp\`ere operator is well defined. It is much larger than
the corresponding local domain of definition, though still a
proper subset of the set $PSH(X,\om)$ of all $\om$-psh functions.

We prove that certain twisted Monge-Amp\`ere operators are well
defined for all $\omega$-psh functions. As a consequence, any
$\om$-psh function with slightly attenuated singularities has
finite weighted Monge-Amp\`ere energy.
\end{abstract}

\maketitle

\section*{Introduction}

\par It is well known that the complex Monge-Amp\`ere operator is
not well defined for arbitrary plurisubharmonic (psh) functions.
Bedford and Taylor \cite{BT82} found a way to define it for
locally bounded psh functions. Later the definition was extended
to classes of unbounded functions (see \cite{Si85}, \cite{De93},
\cite{FS95}). Whenever defined, the Monge-Amp\`ere operator was
shown to be continuous along decreasing sequences, but it is
discontinuous along sequences in $L^p$. The natural domain of
definition of the Monge-Amp\`ere operator on open sets in ${\Bbb
C}^n$ was recently characterized in \cite{Ce04}, \cite{Bl04},
\cite{Bl06}.

\par We consider here the problem of defining Monge-Amp\`ere
operators on a compact K\"ahler manifold $X$ of complex dimension
$n$. Let $PSH(X,\omega)$ denote the set of
$\omega$-plurisubharmonic ($\omega$-psh) functions on $X$. Here,
and throughout the paper, $\omega$ is a fixed K\"ahler form on
$X$. Recall that an upper semicontinuous function $\f\in L^1(X)$
is called $\omega$-psh if the current $\omega_\f:=\omega+dd^c\f$
is positive. Motivated by the results of \cite{BT82}, \cite{Ce04},
\cite{Bl04}, \cite{Bl06}, it is natural to define the domain of
the Monge-Amp\`ere operator as follows: \vskip.2cm

\noindent {\bf Definition.} {\it Let $DMA(X,\omega)$ be the set of
functions $\varphi\in PSH(X,\omega)$ for which there is a positive
Radon measure $MA(\varphi)$ with the following property: If
$\{\varphi_j\}$ is any sequence of bounded $\omega$-psh functions
decreasing to $\varphi$ then $(\om+dd^c {\f_j})^n\rightarrow
MA(\varphi)$, in the weak sense of measures. We set
$$
\omega_\f^n=(\om+dd^c\f)^n:=MA(\f).
$$
}

\par According to this definition, $DMA(X,\omega)$ is the largest
set of $\omega$-psh functions on which the Monge-Amp\`ere operator
$(\om+dd^c\cdot)^n$ can be defined so that it is continuous with
respect to decreasing sequences of bounded $\omega$-psh functions.
It includes all the classes in which the operator was previously
defined, either as a consequence of the local theory, or genuinely
in the compact setting (the class ${\mathcal E}(X,\omega)$ from
\cite{GZ2}). The set $DMA(X,\omega)$ is a proper subset of
$PSH(X,\omega)$ (see Examples \ref{E:curve} and \ref{E:Evans}). We
show in Proposition \ref{P:dec} that the operator is continuous
under decreasing sequences in its domain. Moreover, if $\f\in
DMA(X,\omega)$ then we prove in Proposition \ref{P:cLn} that the
set of points where $\f$ has positive Lelong number is at most
countable.

There are several properties of $DMA(X,\om)$ that we expect to
hold. We discuss these in section \ref{SS:subclasses}, and we
introduce a few subclasses of interest, especially the class
$\widehat{DMA}(X,\om)$ (Definition \ref{D:DMAhat}): a function $\f
\in PSH(X,\om)$ belongs to this class if it is in $DMA(X,\om)$ and
moreover for any sequence of bounded $\om$-psh function $\f_j$
decreasing towards $\f$, and for any ``test function'' $u \in
PSH(X,\om) \cap L^{\infty}(X)$,
$$
\int_X u (\om+dd^c \f_j)^n \longrightarrow \int_X u (\om+dd^c \f)^n.
$$
This convergence property interpolates inbetween the two natures
of the Monge-Amp\`ere measure $(\om+dd^c \f)^n$: on one hand it is
stronger than the weak convergence in the sense of positive Radon
measures (any smooth test function is $C\om$-psh for some constant
$C>0$), on the other hand it is weaker than the convergence in the
sense of Borel measures. We show (Theorem \ref{T:comparison}) that
a generalized comparison principle holds in
$\widehat{DMA}(X,\om)$, and that all concrete classes under
consideration are subsets of this class (see Corollary
\ref{C:EDMAhat} and Theorem \ref{T:DMAloc}).

\vskip.1cm

\par In \cite{GZ2} a class ${\mathcal E}(X,\omega)\subset
PSH(X,\omega)$ was introduced, on which the Monge-Amp\`ere
operator is well defined and continuous along decreasing
sequences, hence ${\mathcal E}(X,\omega)\subset DMA(X,\omega)$.
Defining this class requires that one works globally on a compact
manifold, hence many of its properties have no analogue in the
local context (see \cite{GZ2}). We study in Section \ref{S:E} more
general classes ${\mathcal E}(T,\om)$ of $\om$-psh functions with
finite energy with respect to a closed positive current $T$. These
help us in studying twisted Monge-Amp\`ere operators, which happen
to be well defined in all of $PSH(X,\om)$ (Theorems \ref{T:tma1}
and \ref{T:tma2}). As a consequence, we show that any $\om$-psh
function with slightly attenuated singularities has finite energy
(Corollary \ref{C:atten}):

\vskip.2cm

\noindent {\bf Theorem.} {\it Let $\chi:\R^- \rightarrow \R^-$ be
a smooth convex increasing function, with $\chi'(-1) \leq 1$,
$\chi'(-\infty)=0$. Fix $\f \in PSH(X,\om)$ with $\sup_X \f \leq
-1$. Then
$$
\chi \circ \f \in {\mathcal E}(X,\om) \subset DMA(X,\om),
$$
hence its Monge-Amp\`ere
measure does not charge pluripolar sets.
}
\vskip.2cm

Note that it is necessary to slightly attenuate the singularities
of $\f$ (condition $\chi'(-\infty)=0$), since functions in
${\mathcal E}(X,\om)$ have zero Lelong numbers at all points (see
Lemma \ref{L:ELel}).

\par In Sections \ref{S:local} and \ref{S:Sob} we consider the set
$DMA_{loc}(X,\omega)$ on which the Monge-Amp\`ere operator is
defined as a consequence of the local theory (\cite{Bl04},
\cite{Bl06}). We prove in Theorem \ref{T:DMAloc} that this local
domain can be characterized in terms of energy classes. Moreover,
$DMA_{loc}(X,\om)$ is a proper subset of $DMA(X,\om)$ and consists
of functions whose gradient is square integrable (Proposition
\ref{P:WnW12}). In Proposition \ref{P:div} we show that
$\omega$-psh functions, bounded in a neighborhood of an ample
divisor, belong to $DMA_{loc}(X,\omega)$. In Proposition
\ref{P:wgrad} we prove that the measure
$\chi''(\varphi)d\varphi\wedge d^c \varphi \wedge\omega^{n - 1}$
has density in $L^1(X)$ for every $\f\in PSH(X,\omega)$, where
$\chi$ is any convex increasing function. It is an interesting
problem to study the stability of subclasses in $DMA(X,\omega)$
under standard geometric constructions. We show that
$DMA_{loc}(X,\omega)$ is not preserved by blowing up, but behaves
well under blowing down.

\vskip.1cm

\par We conclude this paper by analyzing further the connection between
$DMA(X,\om)$ and various energy classes in some concrete cases of
K\"ahler surfaces. The motivation comes from the fact that energy
classes have important properties, such as convexity and stability
under taking maximum. Hence an equivalent description of
$DMA(X,\om)$ in terms of such energy classes would be very useful.
Let ${\mathcal E}(\om,\om)$ be the class of $\om$-psh functions on
$X$ such that the trace measure $\om_{\f} \wedge \om$ does not
charge the set $\{\f=-\infty\}$. Then both $DMA_{loc}(X,\om)$ and
${\mathcal E}(X,\om)$ are contained in ${\mathcal E}(\om,\om)$. We
give some evidence that ${\mathcal E}(\om,\om)$ might be equal to
$DMA(X,\om)$ when $\dim_{\C} X=2$.

\vskip.2cm

\noindent {\bf Acknowledgement.} D. Coman is grateful to the
Laboratoire Emile Picard for the support and hospitality during
his visits in Toulouse.

\section{General properties and examples}\label{S:general}

\subsection{Lelong numbers and other constraints}\label{SS:Lelong}

It is important to
require in the definition that the convergence holds on {\it any}
sequence of bounded $\om$-psh functions
(see Example \ref{E:curve}). This allows us to show that
the operator is continuous under decreasing sequences in
$DMA(X,\om)$. We have in fact the following more general property.

\begin{pro}\label{P:dec} Let $\epsilon_j\geq0$,
$\epsilon_j\rightarrow0$, and let $\f_j \in
DMA(X,(1+\epsilon_j)\om)$ be a decreasing sequence towards $\f \in
DMA(X,\om)$. Then
$$
((1+\epsilon_j)\om+dd^c \f_j)^n \longrightarrow (\om+dd^c \f)^n
$$
in the weak sense of measures.
\end{pro}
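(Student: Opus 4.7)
The plan is to reduce the statement to the defining property of $DMA(X,\om)$ at $\f$, by constructing from the $\f_j$ a decreasing sequence of \emph{bounded $\om$-psh} functions with limit $\f$. Since the set of positive Radon measures on $X$ of bounded total mass is metrizable and compact in the weak-$*$ topology, and the masses $(1+\epsilon_j)^n\int_X\om^n$ of the measures $((1+\epsilon_j)\om+dd^c\f_j)^n$ are uniformly bounded, it is enough to show that every subsequence admits a subsubsequence converging weakly to $(\om+dd^c\f)^n$. I therefore pass to a subsequence on which $\epsilon_j\downarrow 0$, and after subtracting a common constant (which affects none of the Monge-Amp\`ere measures involved) I assume $\f_j\leq 0$.

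Write $\om_j:=(1+\epsilon_j)\om$ and, for each $j$, approximate $\f_j$ by the bounded $\om_j$-psh functions $\f_j^k:=\max(\f_j,-k)$. Since $\f_j\in DMA(X,\om_j)$, the definition yields $(\om_j+dd^c\f_j^k)^n\to(\om_j+dd^c\f_j)^n$ weakly as $k\to\infty$. Fix a metric $d$ compatible with the weak-$*$ topology on positive measures of mass bounded by some fixed $M$, and choose $k(j)\to\infty$ strictly increasing, so fast that $\psi_j:=\max(\f_j,-k(j))$ satisfies
$$
d\bigl((\om_j+dd^c\psi_j)^n,\,(\om_j+dd^c\f_j)^n\bigr)<\tfrac 1j.
$$
Monotonicity of $k(j)$ together with the decrease of $\f_j$ makes $(\psi_j)$ a decreasing sequence of bounded $\om_j$-psh functions, and $k(j)\to\infty$ together with $\f_j\downarrow\f$ gives $\psi_j\downarrow\f$ pointwise.

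To transfer to $\om$-psh data, set $\tilde\psi_j:=\psi_j/(1+\epsilon_j)$. Linearity gives $\om+dd^c\tilde\psi_j=(1+\epsilon_j)^{-1}(\om_j+dd^c\psi_j)\geq 0$, so $\tilde\psi_j$ is bounded $\om$-psh. Because $\psi_j\leq 0$ and both $(\psi_j)$ and $(1+\epsilon_j)$ are decreasing, a direct computation shows that $\tilde\psi_j$ is also decreasing; since $\psi_j\to\f$ and $\epsilon_j\to 0$, $\tilde\psi_j\to\f$ pointwise. Applying $\f\in DMA(X,\om)$ to this sequence yields $(\om+dd^c\tilde\psi_j)^n\to(\om+dd^c\f)^n$ weakly. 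Together with the scaling identity $(\om_j+dd^c\psi_j)^n=(1+\epsilon_j)^n(\om+dd^c\tilde\psi_j)^n$ and $(1+\epsilon_j)^n\to 1$, this gives $d((\om_j+dd^c\psi_j)^n,(\om+dd^c\f)^n)\to 0$, and the diagonal estimate then concludes by the triangle inequality.

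The main obstacle is coordinating the diagonal extraction with the rescaling: $k(j)$ must grow fast enough that $\psi_j$ approximates $\f_j$ in Monge-Amp\`ere mass, while $\tilde\psi_j$ must nevertheless be monotone decreasing to $\f$. This is what forces the normalization $\f_j\leq 0$ and the passage to a subsequence on which $\epsilon_j$ is itself monotone; once those reductions are made, each of the remaining steps is routine from the definition of $DMA$.
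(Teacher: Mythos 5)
Your proof is correct and takes essentially the same approach as the paper: a diagonal extraction of bounded approximants $\max(\varphi_j,-k_j)$ (using that each $\varphi_j$ lies in the relevant $DMA$ class) producing a decreasing bounded sequence to which the definition of $DMA(X,\omega)$ at $\varphi$ applies, combined with the rescaling $\psi\mapsto\psi/(1+\epsilon_j)$ to pass between $(1+\epsilon_j)\omega$ and $\omega$. The only cosmetic differences are that the paper first treats the case $\epsilon_j=0$ (testing against a fixed test function rather than a metric for the weak topology) and then rescales the $\varphi_j$ themselves, whereas you rescale the bounded approximants directly.
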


\begin{proof} We first assume that all $\epsilon_j=0$.
Fix $\chi$ a test function and set, for an integer $k>0$,
$\f_j^k:=\max(\f_j,-k) \in PSH(X,\om) \cap L^{\infty}(X)$. Since
$\f_j \in DMA(X,\om)$, we can find an increasing sequence $k_j$ so
that
$$\left|\langle(\om+dd^c \f_j^{k_j})^n,\chi\rangle-\langle
(\om+dd^c \f_j)^n, \chi \rangle \right| \leq 2^{-j}.$$ Thus
$\tilde{\f_j}:=\max(\f_j,-k_j)$ is a sequence of bounded $\om$-psh
functions decreasing towards $\f$, hence $\langle
\om_{\tilde{\f_j}}^n,\chi \rangle \rightarrow \langle \om_\f^n,
\chi \rangle$. The desired convergence follows.

\par In the general case, subtracting a constant we may assume that
$\varphi_1<0$. The sequence of measures
$((1+\epsilon_j)\om+dd^c\f_j)^n$ has bounded mass, so by passing
to a subsequence we may assume that it converges weakly to a
measure $\mu$. By taking another subsequence we may assume that
$\epsilon_j$ decreases to 0. Then
$\varphi'_j=\varphi_j/(1+\epsilon_j)\in DMA(X,\omega)$ is a
decreasing sequence to $\varphi$, so
$\omega_{\varphi'_j}^n\rightarrow\omega_\varphi^n$. We conclude
that $\mu=\omega_\varphi^n$.
\end{proof}

\par If $\varphi\in PSH(X,\omega)$, let $\nu(\f,x)$ be the
Lelong number of $\varphi$ at $x\in X$, and
$$E_{\e}(\f)=\{x\in X\,/\,
\nu(\f,x)\geq\e\}\;,\;E^+(\f)=\bigcup_{\e>0}E_\e(\varphi).$$

\begin{pro}\label{P:cLn}
If $\f \in DMA(X,\om)$ then
$\om_\f^n(\{p\})\geq\nu^n(\f,p)$, for all $p\in X$. Moreover, the
set $E^+(\f)$ is at most countable and
$$\sum_{x \in E^+(\f)}\nu^n(\f,x)\leq \int_X \om^n.$$
\end{pro}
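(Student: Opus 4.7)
The plan is two-fold: establish the pointwise Lelong-number estimate $\om_\f^n(\{p\}) \geq \nu(\f,p)^n$ for each $p \in X$ by a local argument, then derive the countability of $E^+(\f)$ and the sum bound from the fact that $\om_\f^n$ is a finite Radon measure of total mass $\int_X \om^n$.

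For the first step, I would fix $p \in X$ with $\nu := \nu(\f,p)>0$ (nothing to prove otherwise) and pick a small coordinate chart $U$ centered at $p$ on which $\om = dd^c\rho$ for some smooth potential $\rho$. Setting $u := \rho+\f$, this is a psh function on $U$ with $\nu(u,p)=\nu$. The bounded approximants $\tilde\f_k := \max(\f,-k)$ decrease to $\f$, so by the defining property of $DMA(X,\om)$ one has $\om_{\tilde\f_k}^n \to \om_\f^n$ weakly on $X$, hence on $U$; and on $U$, $\om_{\tilde\f_k}^n = (dd^c u_k)^n$ with $u_k := \rho + \tilde\f_k = \max(u,\rho-k)$ a bounded psh sequence decreasing to $u$. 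I would then adapt Demailly's classical comparison between Lelong numbers and Monge-Amp\`ere mass: for $\gamma < \nu$, the Lelong condition gives $u(z) \leq \gamma\log|z-p|+O(1)$ near $p$, and applying the Bedford-Taylor comparison principle to each bounded $u_k$ against a truncation of the test function $\gamma\log|z-p|-M$, then passing to the weak limit in $k$ with $M\to\infty$, should yield $\om_\f^n(\bar B_r) \geq \gamma^n$ for every small closed ball $\bar B_r$ around $p$. Outer regularity of $\om_\f^n$ then gives $\om_\f^n(\{p\}) \geq \gamma^n$ by letting $r\to 0$, and $\gamma\nearrow\nu$ concludes the step.

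For the second step, since $X$ is compact, weak convergence preserves total mass:
$$\om_\f^n(X) = \lim_k \om_{\tilde\f_k}^n(X) = \int_X \om^n,$$
the last equality being Stokes' identity for bounded $\om$-psh functions. Hence $\om_\f^n$ is a finite positive Radon measure, so the set $\{x \in X : \om_\f^n(\{x\}) > 0\}$ is at most countable. By the first step it contains $E^+(\f)$, which is therefore at most countable, and
$$\sum_{x \in E^+(\f)} \nu(\f,x)^n \;\leq\; \sum_{x \in E^+(\f)} \om_\f^n(\{x\}) \;\leq\; \om_\f^n(X) = \int_X \om^n.$$

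The main obstacle lies in the first step. Because $\om_\f^n|_U$ is only defined as a weak limit of classical Monge-Amp\`ere measures of bounded approximants, the local Monge-Amp\`ere theory (and in particular Demailly's theorem on Lelong numbers) cannot be invoked as a black box: the function $u=\rho+\f$ need not lie in the local domain of definition. One has to run Demailly's proof at the level of the sequence $(u_k)$, where the classical comparison principle \emph{is} available but each $u_k$ has Lelong number $0$ at $p$, so that no single $(dd^c u_k)^n$ charges $\{p\}$. The atomic mass must therefore emerge entirely in the weak limit, and this forces one to choose the truncation and shift parameters in the test function carefully, in a $k$-dependent way, so that the mass produced by the comparison is stable under the weak passage to the limit.
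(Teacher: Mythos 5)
Your second step is fine, and your overall architecture (pointwise bound at each $p$, then countability and the sum bound from the finiteness of $\om_\f^n$) is consistent with the paper's. The problem is the first step, and it is more serious than the ``main obstacle'' you flag at the end: the purely local comparison argument on the canonical approximants $u_k=\max(u,\rho-k)$ cannot be completed, no matter how carefully the truncation and shift parameters are tuned. The reason is that your argument would use only the local data ``$u$ psh on $U$ with $\nu(u,p)=\nu>0$, $u_k=\max(u,\rho-k)$ bounded psh decreasing to $u$, $(dd^cu_k)^n\to\mu$ weakly'', and would conclude $\mu(\{p\})\geq\nu^n$. This conclusion is false for that local data in general: take $\f$ as in Example \ref{E:curve}, with $\om_\f=d^{-1}[{\mathcal C}]$. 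Its canonical approximants have Monge--Amp\`ere measures of total mass $1$ concentrated on shrinking neighborhoods of ${\mathcal C}$, so any weak limit is carried by ${\mathcal C}$; since every point of the (uncountable) curve ${\mathcal C}$ has Lelong number $1/d$, the limit cannot have an atom of mass $\geq d^{-2}$ at each such point. Thus the statement you want to prove in step 1 is true only because $\f\in DMA(X,\om)$, and that global hypothesis enters nowhere in your local comparison scheme. Any correct proof must exploit the freedom, granted by the definition of $DMA(X,\om)$, to replace the canonical approximants by a \emph{different} decreasing sequence of bounded (or locally bounded off $p$) $\om$-psh functions.

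That is exactly what the paper does. It sets $\f_j=\max(\f,\e_0g-j)$ where $g$ is a global function with a logarithmic pole at $p$ and $\e_0g\in PSH(X,\om)$; each $\f_j$ is locally bounded off $p$, hence lies in $DMA_{loc}(X,\om)\subset DMA(X,\om)$ by Proposition \ref{P:div}, and by Demailly's local theory \cite{De93} already satisfies $\om_{\f_j}^n(\{p\})\geq(\min(\e_0,\nu(\f,p)))^n$; Proposition \ref{P:dec} then transfers the atom to $\om_\f^n$. Since $\e_0$ is constrained by the requirement $\e_0g\in PSH(X,\om)$, this only shows that $\om_\f^n$ has \emph{some} atom at $p$; to reach the sharp constant $\nu^n(\f,p)$ and the mass bound $\sum_x\nu^n(\f,x)\leq\int_X\om^n$, the paper invokes Demailly's regularization theorem \cite{De92} to produce $\psi_j\in PSH(X,(1+\d_j)\om)$ decreasing to $\f$, smooth outside the finite sets $E_{\e_j}(\f)$, with Lelong numbers converging uniformly, applies the classical estimates to each $\psi_j$, and passes to the limit using the $(1+\epsilon_j)\om$ version of Proposition \ref{P:dec}. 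If you want to salvage your write-up, you should replace your step 1 by an argument of this kind; your step 2 can then be kept essentially as is.
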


\begin{proof} Observe that the measure $\om_\f^n$ has at
most countably many atoms. We are going to show that if
$\nu(\f,x)>0$ then $\om_\f^n$ has an atom at $x$. Let
$g=\chi\,\log dist(\cdot, x)$, where $\chi\geq0$ is a smooth cut
off function such that $\chi=1$ in a neighborhood of $x$. Since
$\omega$ is K\"ahler, $dd^c g \geq 0$ near $x$, and $g$ is smooth
on $X\setminus\{x\}$, we can find $\e_0>0$ such that $\e_0 g \in
PSH(X,\om)$. Consider
$$\f_j:=\max(\f,\e_0 g-j)\in PSH(X,\om)
\cap L^{\infty}_{loc}(X \setminus \{x\}).$$ Proposition
\ref{P:div} from Section \ref{S:Sob} implies $\f_j \in
DMA(X,\om)$. Since $\f_j$ is in the local domain of definition of
the Monge-Amp\`ere operator, we have by \cite{De93}
$$\om_{\f_j}^n(\{x\})\geq\nu^n(\f_j,x)=(\min
(\e_0,\nu(\f,x)))^n.$$ Using Proposition \ref{P:dec}, we infer
that $\om_\f^n$ has a Dirac mass at $x$.

\par By \cite{De92}, there exist sequences $\e_j,\d_j \searrow 0$,
and $\psi_j\in PSH(X,(1+\d_j)\om)$, $\psi_j\searrow\varphi$, such
that $\psi_j$ is smooth in $X \setminus E_{\e_j}(\f)$ and $\sup_{x
\in X} |\nu(\psi_j,x)-\nu(\f,x)| \rightarrow 0$. Using Siu's
theorem \cite{Siu74} and the previous discussion, we conclude that
$E_{\e_j}(\f)$ is a finite set and
$$(1+\delta_j)^n\int_X\om^n\geq\sum_{x\in
E^+(\f)}\nu^n(\psi_j,x),\;
((1+\delta_j)\om+dd^c\psi_j)^n(\{p\})\geq\nu^n(\psi_j,p),$$ where
$p\in X$. We conclude by Proposition \ref{P:dec}, letting
$j\to+\infty$.
\end{proof}

\par Proposition \ref{P:cLn} provides examples of
functions not in $DMA(X,\om)$.

\begin{exa}\label{E:curve} Let $X={\Bbb P}^2$, $\om$ be the
Fubini-Study K\"ahler form, and $\f \in PSH(\P^2,\om)$ be so that
$\om_\f=d^{-1}[{\mathcal C}]$, where $[\mathcal C]$ is the current
of integration along an algebraic curve $\mathcal C$ of degree $d
\geq 1$. By Proposition \ref{P:cLn}, $\f \notin DMA({\Bbb
P}^2,\om)$ since $E^+(\varphi)$ is not countable. Alternatively,
we can construct two sequences of functions in $DMA(\P^2,\om)$
decreasing to $\varphi$ with constant Monge-Amp\`ere measures that
are different. Indeed, if $L$ is a generic line,
$\f_j^L:=\max(\f,u_L-j)\in PSH(\P^2,\om)$, where $\om+dd^c
u_L=[L]$, then $\f_j^L$ is continuous outside the finite set $L
\cap {\mathcal C}$, and $\varphi_j^L\searrow\varphi$. Hence
$\f_j^L \in DMA(\P^2,\om)$ and
$$
(\om+dd^c \f_j^L)^2=\om_\f\wedge [L]=\frac{1}{d}\sum_{p \in L \cap
{\mathcal C}} \d_p
$$
is independent of $j$. Here $\delta_p$ is the Dirac mass at $p$,
and the first equality follows easily since the currents involved
have local potentials which are pluriharmonic away from their
$(-\infty)$-locus. Using sequences $\f_j^L,\f_j^{L'}$, for lines
$L\neq L'$, we conclude by Proposition \ref{P:dec} that $\f \notin
DMA(\P^2,\om)$.
\end{exa}

\par The previous construction can be generalized to exhibit
examples of functions $\f\notin DMA(X,\om)$ with zero Lelong
numbers at all but one point.

\begin{exa}\label{E:Evans}
Assume $\f \in PSH(\P^2,\om)$ is such that $\{\f=-\infty\}$ is a
closed proper subset of $\P^2$, and the positive current $\om_\f$
is supported on $\{\f=-\infty\}$. We claim that $\f \notin
DMA(\P^2,\om)$. Indeed, let $p\notin\{\f=-\infty\}$ and $q_1,q_2$
be distinct points in $\{\f=-\infty\}$. The lines $L_k=(pq_k)$
intersect $\{\f=-\infty\}$ in compact subsets of $\C \subset \P^1
\simeq L_k$, hence both sequences
$$\f_j^{L_k}:=\max(\f, u_{L_k}-j)\in DMA(\P^2,\om),\;where
\;\om+dd^cu_{L_k}=[L_k],$$ decrease towards $\f$. Now $(\om+dd^c
\f_j^{L_k})^2=\om_\f\wedge [L_k]$ are distinct measures
(independent of $j$), hence $\f\notin DMA(\P^2,\om)$.\end{exa}

\par A concrete function $\f$ as in Example \ref{E:Evans} is
obtained as follows. Fix $a \in \P^2$ and a line $L \simeq \P^1$,
$a \notin L$. Let $\pi$ denote the projection from $a$ onto $L$:
this is a meromorphic map which is holomorphic in $\P^2 \setminus
\{a\}$. Fix $\nu$ a probability measure on $L$ with no atom, whose
logarithmic potential is $-\infty$ exactly on the support of $\nu$
(one can consider for instance the Evans potential of a compact
polar Cantor set). Then $\pi^* \nu=\om_\f$, where $\f \in
PSH(\P^2,\om)$ satisfies our assumptions. Note that $\nu(\f,x)=0$,
$x\neq a$, and $\nu(\f,a)=1$. Further examples can be obtained by
considering functions $\f$ such that $\om_\f$ is a laminar
current, whose transverse measure is an Evans measure.

\subsection{Special subclasses}\label{SS:subclasses}

There are several properties of $DMA(X,\om)$ that we expect to hold.
We analyze here some of these and introduce interesting
subclasses of $DMA(X,\om)$.

\subsubsection{Intermediate Monge-Amp\`ere operators}

It is natural to expect that if a function $\f \in PSH(X,\om)$
has a well defined Monge-Amp\`ere measure $(\om+dd^c \f)^n$,
then the currents $(\om+dd^c \f)^\ell$ are also well defined
for $1 \leq \ell \leq n$.
Unfortunately we are unable to prove this
(except of course in dimension 2), hence the following:

\begin{defi}\label{D:intMA} We let $DMA_\ell(X,\om)$, where $1\leq
\ell\leq n$, be the set of functions $\varphi\in PSH(X,\omega)$
for which there is a positive closed current $MA^\ell(\f)$ of
bidegree $(\ell,\ell)$ with the following property: If
$\{\varphi_j\}$ is any sequence of bounded $\omega$-psh functions
decreasing to $\varphi$ then $\om_{\f_j}^\ell\rightarrow
MA^\ell(\f)$, in the weak sense of currents. We set
$\omega_\f^\ell=(\om+dd^c \f)^\ell:=MA^\ell(\f)$. We also set
$$
DMA_{\leq k}(X,\om)=\bigcap_{\ell=1}^k DMA_\ell(X,\om).
$$
\end{defi}

\par We let the reader check that Proposition \ref{P:dec} holds
for $\f\in DMA_\ell(X,\omega)$ and $1\leq\ell\leq n$. Clearly
$$DMA_{\leq n}(X,\om)\subseteq DMA_n(X,\om)=DMA(X,\om),$$ with
equality when $n=2$  since $\f\longmapsto\om_\f$ is well defined
for all $\omega$-psh functions. We expect the equality to hold
also when $n\geq3$.
\vskip.1cm

\par It is clear from the definition that
$\varphi\in DMA(X,\omega)$ if and only if $\lambda\varphi\in
DMA(X,\lambda\omega)$, where $\lambda>0$. One would expect
moreover that, if $\f \in DMA(X,\om)$, then $\l \f \in DMA(X,\om)$
for $0 \leq \lambda \leq 1$, as the function $\l \f$ is slightly
less singular than $\f$. It is also natural to expect that the
class $DMA(X,\om)$ is stable under taking maximum:
$$
\f \in DMA(X,\om) \text{ and } \p \in PSH(X,\om)
\stackrel{?}{\Longrightarrow } \max(\f,\p) \in DMA(X,\om).
$$
If this property holds, then applying it to $\p=\l \f \leq 0$, $0 \leq \l \leq 1$, shows that
$\l \f=\max(\f,\l \f) \in DMA(X,\om)$ as soon as $\f \in DMA(X,\om)$.
An alternative and desirable property is
$$
DMA(X,\om) \stackrel{?}{=} DMA(X,\om') \cap PSH(X,\om),
$$
when $\om \leq \om'$.
All these properties are related to convexity properties of
$DMA(X,\om)$.

\subsubsection{The non pluripolar part}

\par  It was observed in \cite{GZ2} that if $\f$ is $\om$-psh and
$\f_j:=\max(\f,-j)$, then
$$j\longmapsto\mu_j(\f):=\1_{\{\f >-j \}} (\om+dd^c \f_j)^n$$
is an increasing sequence of positive Radon measures, of total
mass uniformly bounded above by $\int_X \om^n$. Thus
$\{\mu_j(\f)\}$ converges to a positive Radon measure $\mu(\f)$ on
$X$. It is generally expected (see \cite{BT87} for similar
considerations in the local context) that $\mu(\f)$ should
correspond to the non pluripolar part of $(\om+dd^c\f)^n$,
whenever the latter makes sense. We can justify this expectation
in two special cases:

\begin{pro}\label{P:P16}
Fix $\f \in DMA(X,\om)$. Then
$$
\mu(\f)\leq{\bf 1}_{ \{ \f>-\infty \} } (\om+dd^c \f)^n,
$$
with equality if $\exp \f$ is continuous, or if $\om_\f^n$ is
concentrated on $\{\f=-\infty\}$.
\end{pro}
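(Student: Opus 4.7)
The plan is to prove the general inequality first, and then establish the equality under each of the two sufficient hypotheses. For the inequality, note that $\1_{\{\f>-j\}}$ vanishes on $\{\f=-\infty\}$, so every $\mu_j(\f)$ puts zero mass on this set; passing to the increasing limit gives $\mu(\f)(\{\f=-\infty\})=0$. To compare $\mu(\f)$ with $\om_\f^n$, for any continuous $\chi\ge 0$ the pointwise bound $\mu_j(\f)\le(\om+dd^c\f_j)^n$, combined with monotone convergence on the left ($\mu_j(\f)\nearrow\mu(\f)$) and the weak convergence $(\om+dd^c\f_j)^n\to\om_\f^n$ on the right, yields $\int\chi\,d\mu(\f)\le\int\chi\,d\om_\f^n$. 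Together these two observations give $\mu(\f)\le\1_{\{\f>-\infty\}}\om_\f^n$.

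The concentration case is immediate: if $\om_\f^n$ is supported on $\{\f=-\infty\}$, then $\1_{\{\f>-\infty\}}\om_\f^n=0$, and the inequality forces $\mu(\f)=0$, so equality holds trivially. For the continuity case, I normalize $\f\le 0$ (by subtracting $\sup_X\f$), and replace the Borel indicator $\1_{\{\f>-\infty\}}$ by the continuous family $e^{\a\f}$ for $\a>0$, which increases pointwise to $\1_{\{\f>-\infty\}}$ as $\a\searrow 0^+$. Continuity of $\exp\f$ makes both $e^{\a\f}$ and $e^{\a\f_j}=\max(\exp\f,e^{-j})^{\a}$ continuous on $X$, with $e^{\a\f_j}\searrow e^{\a\f}$ uniformly by Dini's theorem. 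Coupling this uniform convergence with the weak convergence $(\om+dd^c\f_j)^n\to\om_\f^n$ yields
$$
\int_X e^{\a\f_j}(\om+dd^c\f_j)^n\longrightarrow\int_X e^{\a\f}\,d\om_\f^n.
$$

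I then split the left-hand integrand over $\{\f>-j\}$ (on which $\f_j=\f$, contributing $\int e^{\a\f}\,d\mu_j(\f)$) and $\{\f\le-j\}$ (on which $e^{\a\f_j}=e^{-\a j}$, contributing at most $e^{-\a j}\int_X\om^n=o(1)$); sending $j\to\infty$ and using monotone convergence $\int e^{\a\f}\,d\mu_j(\f)\nearrow\int e^{\a\f}\,d\mu(\f)$ gives
$$
\int_X e^{\a\f}\,d\mu(\f)=\int_X e^{\a\f}\,d\om_\f^n\qquad\text{for every }\a>0.
$$
Letting $\a\searrow 0^+$, monotone convergence on both sides (since $\f\le 0$ implies $e^{\a\f}\nearrow\1_{\{\f>-\infty\}}$) yields $\mu(\f)(X)=\om_\f^n(\{\f>-\infty\})=(\1_{\{\f>-\infty\}}\om_\f^n)(X)$; combined with the general inequality and equal total masses, the two Radon measures must coincide.

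The main obstacle is finding continuous approximations of the Borel indicator $\1_{\{\f>-\infty\}}$ that are compatible with the weak convergence $(\om+dd^c\f_j)^n\to\om_\f^n$. The hypothesis $\exp\f\in C(X)$ is precisely what makes the family $e^{\a\f}$ available for this purpose, and the Dini-type uniform convergence $e^{\a\f_j}\to e^{\a\f}$ is what enables both $j$-dependent factors to be passed through the weak limit simultaneously.
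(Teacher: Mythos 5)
Your proof is correct. The core idea --- replacing the Borel indicator ${\bf 1}_{\{\f>-\infty\}}$ by continuous approximants that vanish on $\{\f=-\infty\}$ and are compatible with the weak convergence $\om_{\f_j}^n\to\om_\f^n$ --- is the same as in the paper, but the execution differs in a few places. The paper uses the bounded $\om$-psh cutoffs $u_s=\max(\f/s+1,0)$, which are identically zero on $\{\f\le-s\}$ (so the truncation identity $u_s\,\om_{\f_j}^n=u_s\,\mu_j(\f)$ for $j>s$ is exact, with no error term) and which are continuous precisely when $e^\f$ is; it derives the general inequality from the upper semicontinuity of $u_s$, and in the continuous case it identifies $u_s\,\mu(\f)=u_s\,\om_\f^n$ as measures before letting $s\to\infty$. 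You instead obtain the inequality by the more elementary route $\mu(\f)\le\om_\f^n$ (testing against continuous functions) combined with $\mu(\f)(\{\f=-\infty\})=0$, and you close the equality case by comparing total masses, using that a dominated finite measure with the same total mass as its dominator must equal it; the small error $e^{-\a j}\int_X\om^n$ on $\{\f\le-j\}$ is handled explicitly. Both arguments are valid. One advantage of the paper's choice is that the $u_s$ are bounded $\om$-psh, so the same proof transfers verbatim to the class $\widehat{DMA}(X,\om)$ in Corollary \ref{C:C18}, where convergence of $\int u\,\om_{\f_j}^n$ is only guaranteed for bounded $\om$-psh test functions $u$ --- a role the functions $e^{\a\f}$ cannot play, since they are not $\om$-psh.
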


\begin{proof}
We can assume wlog that $\f \leq 0$.
Set $u_s:=\max(\f/s+1,0)$. Note that $u_s$ are bounded $\om$-psh functions
which increase towards ${\bf 1}_{\{\f >-\infty\}}$.
Moreover $\{u_s>0\}=\{\f>-s\}$ and $u_s=0$ elsewhere. We infer, when $j>s$, that
$$
u_s (\om+dd^c \f_j)^n=u_s {\bf 1}_{ \{ \f>-j \}} (\om+dd^c \f_j)^n,
$$
where $\f_j=\max(\f,-j)$ are the canonical approximants.

When $e^{\f}$ is continuous, then so is $u_s$, hence passing to
the limit yields $u_s (\om+dd^c \f)^n=u_s \mu(\f)$. Letting $s
\rightarrow +\infty$, we infer $\mu(\f)={\bf 1}_{ \{ \f>-\infty \}
} \mu(\f)={\bf 1}_{\{\f>-\infty\}}(\om+dd^c \f)^n$.

In the general case, we only get $u_s \mu(\f) \leq u_s (\om+dd^c
\f)^n$, since $u_s$ is upper-semi-continuous. This yields
$\mu(\f)={\bf 1}_{ \{ \f>-\infty \} } \mu(\f) \leq {\bf 1}_{ \{
\f>-\infty \} } (\om+dd^c \f)^n$, whence equality if $(\om+dd^c
\f)^n$ is concentrated on $\{\f=-\infty\}$.
\end{proof}

To overcome this difficulty in the general case, we introduce
interesting subclasses of $DMA(X,\om)$:

\begin{defi}\label{D:DMAhat} Fix $1 \leq \ell \leq n$.
We let $\widehat{DMA}_\ell(X,\om)$ (resp. $\widehat{DMA}_{\leq \ell}(X,\om)$)
denote the set of functions $\f \in DMA_\ell(X,\om)$
(resp. $DMA_{\leq \ell}(X,\om)$) such that
for any sequence $\f_j \in PSH(X,\om) \cap L^{\infty}(X)$ decreasing to $\f$,
$$
\int_X u (\om+dd^c \f_j)^\ell \wedge \om^{n-\ell} \longrightarrow
\int_X u (\om+dd^c \f)^\ell \wedge \om^{n-\ell},
$$
for all $u \in PSH(X,\om) \cap L^{\infty}(X)$.
\end{defi}

Note that this convergence property is stronger than the usual
convergence in the weak sense of Radon measures: any smooth test
function is $C\om$-psh for some constant $C>0$. The following
corollary can be proved exactly like Proposition \ref{P:P16}.

\begin{cor}\label{C:C18} If $\f \in \widehat{DMA}(X,\om)$ then
$$
\mu(\f)={\bf 1}_{\{\f>-\infty\}} (\om+dd^c \f)^n.
$$
Moreover, if $\f_j= \max(\f , - j)$ and $B \subset \{ \f >  -
\infty\}$ is any Borel set then
$$
\int_B \omega_{\f}^n = \lim_{j \to + \infty} \int_{B \cap \{\f > -
j\}} \omega_{\f_j}^n.
$$
\end{cor}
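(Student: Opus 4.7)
The plan is to import the auxiliary family $u_s := \max(\f/s+1,0)$ from the proof of Proposition \ref{P:P16} and exploit the $\widehat{DMA}$ hypothesis in exactly the spot where the previous argument ran into upper-semi-continuity trouble. After normalizing $\f \leq 0$, recall that each $u_s$ is bounded $\om$-psh, that $0 \leq u_s \leq 1$, that $u_s \nearrow \mathbf{1}_{\{\f > -\infty\}}$ as $s \to +\infty$, and that $\{u_s > 0\} = \{\f > -s\}$. Writing $\f_j = \max(\f,-j)$ for the canonical approximants and $\mu_j(\f) = \mathbf{1}_{\{\f > -j\}}\om_{\f_j}^n$, the inclusion $\{\f \leq -j\} \subset \{u_s = 0\}$ (valid for $j > s$) gives the identity
$$u_s \, \om_{\f_j}^n \;=\; u_s \, \mathbf{1}_{\{\f > -j\}}\,\om_{\f_j}^n \;=\; u_s \, d\mu_j(\f).$$

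Now I would let $j \to +\infty$ in $\int_X u_s\,\om_{\f_j}^n = \int_X u_s\,d\mu_j(\f)$. The left-hand side converges to $\int_X u_s\,\om_\f^n$ because $u_s$ is a bounded $\om$-psh test function and $\f \in \widehat{DMA}(X,\om)$; this is precisely the step that was previously unavailable. For the right-hand side, the measures $\mu_j(\f)$ increase to $\mu(\f)$ as positive Radon measures, so monotone convergence applied to the nonnegative bounded Borel function $u_s$ yields $\int u_s \,d\mu_j(\f) \to \int u_s \,d\mu(\f)$. Hence
$$\int_X u_s \,\bigl[\om_\f^n - d\mu(\f)\bigr] \;=\; 0 \qquad \text{for every } s \geq 1.$$

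Proposition \ref{P:P16} already supplies $\mu(\f) \leq \mathbf{1}_{\{\f > -\infty\}}\om_\f^n$, so the measure in brackets is nonnegative. Since $u_s > 0$ exactly on $\{\f > -s\}$, the vanishing of the integral forces $\om_\f^n = \mu(\f)$ on $\{\f > -s\}$; letting $s \to +\infty$ and using that $\mu(\f)$ has no mass on $\{\f = -\infty\}$ gives the first claim $\mu(\f) = \mathbf{1}_{\{\f > -\infty\}}\om_\f^n$. The Borel statement then falls out immediately: for $B \subset \{\f > -\infty\}$,
$$\int_B \om_\f^n \;=\; \mu(\f)(B) \;=\; \lim_{j \to +\infty} \mu_j(\f)(B) \;=\; \lim_{j \to +\infty} \int_{B \cap \{\f > -j\}} \om_{\f_j}^n,$$
again by monotone convergence for the increasing sequence $\mu_j(\f) \uparrow \mu(\f)$. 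I do not foresee any serious obstacle: the entire purpose of the class $\widehat{DMA}(X,\om)$ is to legalize the limit step against the bounded $\om$-psh weight $u_s$, thereby removing the one-sided inequality that confined Proposition \ref{P:P16} to the special cases treated there.
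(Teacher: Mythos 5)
Your proof is correct and follows essentially the same route as the paper, which simply notes that Corollary \ref{C:C18} ``can be proved exactly like Proposition \ref{P:P16}'': you reuse the cut-off functions $u_s=\max(\f/s+1,0)$ and observe that the $\widehat{DMA}$ hypothesis upgrades the one-sided inequality $u_s\,\mu(\f)\leq u_s\,\om_\f^n$ to an equality by legitimizing the limit $\int_X u_s\,\om_{\f_j}^n\to\int_X u_s\,\om_\f^n$. The deduction of the Borel-set statement from the setwise monotone convergence $\mu_j(\f)\uparrow\mu(\f)$ is also exactly what is intended.
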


As a consequence, we obtain the following generalized comparison principle:

\begin{thm}\label{T:comparison} Let  $\f, \p\in PSH (X,\om)$ and set $\f \vee \psi := \max(\f,\psi)$. Then
$$
{\bf 1}_{\{ \f > \psi\}} \mu ( \f) ={\bf 1}_{\{\f > \psi\}}
\mu (\f \vee \psi).
$$
Moreover if $\f, \p\in \widehat{DMA} (X,\om)$ then
$$
\int _{\{  \f < \psi \} } \omega_{\psi}^n \leq \int _{\{  \f <
\psi \} \cup \{\f = -\infty\} }\omega_{\f}^n.
$$
\end{thm}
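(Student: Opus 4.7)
The plan is to handle the two assertions separately. Set $u := \f \vee \p$, and consider the canonical truncations $\f_j := \max(\f, -j)$, $\p_j := \max(\p, -j)$, $u_j := \max(u, -j) = \max(\f_j, \p_j)$, all bounded $\om$-psh.

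For the first identity, I would invoke the local Bedford--Taylor identity
$$\1_{\{\f_j > \p_j\}} \om_{\f_j}^n = \1_{\{\f_j > \p_j\}} \om_{u_j}^n,$$
which holds in a chart where $\om = dd^c \rho$ (there $\om + dd^c f = dd^c(\rho + f)$ and the statement is the classical Bedford--Taylor identity for bounded psh functions). A short case analysis gives $\{\f > \p\} \cap \{\f > -j\} \subset \{\f_j > \p_j\}$, and also $\{u > -j\} \cap \{\f > \p\} = \{\f > -j\} \cap \{\f > \p\}$ (because $u = \f$ on $\{\f > \p\}$). Multiplying the identity by $\1_{\{\f > \p\}} \1_{\{\f > -j\}}$ and letting $j \to \infty$, using the definition $\mu(\f) = \lim_j \1_{\{\f > -j\}} \om_{\f_j}^n$ and the analogous formula for $\mu(u)$, yields the first identity.

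For the inequality with $\f, \p \in \widehat{DMA}(X,\om)$, I would apply the classical comparison principle for \emph{bounded} $\om$-psh functions to $\f_j, \p_j$, giving
$$\int_{\{\f_j < \p_j\}} \om_{\p_j}^n \leq \int_{\{\f_j < \p_j\}} \om_{\f_j}^n,$$
and note that $\{\f_j < \p_j\} = \{\f < \p\} \cap \{\p > -j\}$. The left-hand side converges to $\int_{\{\f < \p\}} \om_\p^n$ by Corollary \ref{C:C18} applied to $\p$ on the Borel set $\{\f < \p\} \subset \{\p > -\infty\}$. For the right-hand side, split $\{\f_j < \p_j\} = A_j \sqcup B_j$ with $A_j := \{-j < \f < \p\}$ and $B_j := \{\f \leq -j, \p > -j\}$. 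On the open set $\{\f > -j\}$, $\f_j = \f$ locally, so the Bedford--Taylor theory forces $\om_{\f_j}^n = \om_\f^n$ there, whence $\int_{A_j} \om_{\f_j}^n = \int_{A_j} \om_\f^n \nearrow \int_{\{-\infty < \f < \p\}} \om_\f^n$ by monotone convergence. For the remainder, $\int_{B_j} \om_{\f_j}^n \leq \int_X \om^n - \int_{\{\f > -j\}} \om_\f^n$, which tends to $\om_\f^n(\{\f = -\infty\})$ since $\int_X \om_\f^n = \int_X \om^n$. Summing yields $\limsup_j \int_{\{\f_j < \p_j\}} \om_{\f_j}^n \leq \int_{\{\f < \p\} \cup \{\f = -\infty\}} \om_\f^n$, which combined with the comparison and the left-hand side limit gives the claim.

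The main obstacle is the mass accounting on the right-hand side: identifying the limit of $\int_{B_j} \om_{\f_j}^n$ as exactly the pluripolar mass $\om_\f^n(\{\f = -\infty\})$. This is precisely what accounts for the extra term $\{\f = -\infty\}$ in the statement compared with the standard comparison principle in $\mathcal{E}(X,\om)$, where Monge--Amp\`ere measures put no mass on pluripolar sets.
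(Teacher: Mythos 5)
Your proof of the first identity follows essentially the paper's own argument: the same Bedford--Taylor identity for the truncations, the same level-set inclusion, and the same passage to the limit using the increasing convergence $\1_{\{\f>-j\}}\om_{\f_j}^n\nearrow\mu(\f)$ (the paper truncates $\p$ at level $j+1$ to get the inclusion $\{\f>\p\}\subset\{\f_j>\p_{j+1}\}$ outright, but your variant with the extra factor $\1_{\{\f>-j\}}$ works just as well).

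For the inequality your route is genuinely different, and it is correct. The paper first treats bounded $\p$ by combining the first identity with the equality of total masses $\int_X\om_{\max(\f,\p)}^n=\int_X\om_{\f}^n$, then lets $\p_j\searrow\p$, and finally replaces $\f$ by $\f+\e$ and sends $\e\to0$ to convert the set $\{\f\le\p\}$ into $\{\f<\p\}\cup\{\f=-\infty\}$. You instead apply the bounded comparison principle directly to the canonical truncations $\f_j,\p_j$ and do the mass accounting in one pass; this avoids both the reduction to bounded $\p$ and the $\e$-shift, and it makes the pluripolar term appear transparently as the missing mass $\int_X\om^n-\lim_j\int_{\{\f>-j\}}\om_{\f}^n=\om_{\f}^n(\{\f=-\infty\})$. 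One justification does need repair: $\{\f>-j\}$ is \emph{not} open in general ($\f$ is merely upper semicontinuous, so only sublevel sets are open), and in any case $\om_{\f}^n$ for $\f\in DMA(X,\om)$ is defined globally by decreasing limits, not by the local theory; so you cannot invoke ordinary open-set locality of the Bedford--Taylor operator to get $\1_{\{\f>-j\}}\om_{\f_j}^n=\1_{\{\f>-j\}}\om_{\f}^n$. The correct justification is plurifine locality as encoded in (\ref{e:nu}) together with Corollary \ref{C:C18}: for $\f\in\widehat{DMA}(X,\om)$ one has $\1_{\{\f>-j\}}\om_{\f_j}^n=\1_{\{\f>-j\}}\mu(\f)=\1_{\{\f>-j\}}\om_{\f}^n$. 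With that substitution every step of your argument goes through.
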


\begin{proof}
 Set $\f_j = \max (\f, -j ) $
and $\psi_j =\max (\psi, -j )$. Recall from \cite{BT87} that the
desired equality is
 known for bounded psh functions,
$$
{\bf 1}_{\{\f_j> \psi_{j+1}\}} (\omega + dd^c \f_j )^n ={\bf 1}_{\{\f _j> \psi_{j+1}\}}
 (\omega + dd^c \max ( \f_j , \psi_{j+1}) )^n .
$$
Observe that
 $\{\f > \psi\} \subset \{\f _j > \psi_{j+1}\}$, hence
\begin{multline*}
{\bf 1}_{\{\f > \psi \}}\cdot {\bf 1}_ {\{\f  >-j \} } (\omega + dd^c \f_j )^n =
{\bf 1}_{\{\f > \psi \}}\cdot {\bf 1}_ {\{\f  >-j \} }  (\omega + dd^c \max (\f , \psi , -j) )^n
\\
 =
{\bf 1}_{\{\f > \psi \}} \cdot {\bf 1}_{ \{\f \vee \psi > -j \} }
(\omega + dd^c \max ( \f \vee \psi , -j) )^n  .
\end{multline*}
Note that the sequence of measures $ {\bf 1}_ {\{\f  >-j \} } (\omega + dd^c \f_j )^n $
converges in the strong sense of Borel measures towards $\mu(\f) $ and
 the sequence  ${\bf 1}_{ \{ \f \vee \psi > -j \}}
(\omega + dd^c ( \f \vee \psi , -j) )^n$ converges in the strong
sense of Borel measures towards $\mu (\f \vee \psi) $. Hence,
since $\{\f > \p\} \subset  \{\f > - \infty\} \subset \{ \f \vee
\p > - \infty\},$ it follows from Corollary \ref{C:C18} that
$$
{\bf 1}_{\{\f > \psi \}} \mu (\f)  ={\bf 1}_{\{\f > \psi\}}
\mu (\f \vee \psi) .
$$
Now let  $\f, \p\in  \widehat{DMA} (X,\om)$ and assume first that
$\p$ is bounded. Then it follows from Corollary \ref{C:C18} that
$$
\int _{\{\f < \psi\}} (\omega + dd^c \psi )^n
 =  \int _{\{\f < \psi\}} (\omega + dd^c \max(\psi,\f) )^n.
$$
Since $ \int_X (\omega + dd^c \max(\psi,\f) )^n =  \int_X (\omega + dd^c \f )^n$ it follows that
$$
\int _{\{\f < \psi\}} (\omega + dd^c \psi )^n  \leq   \int _X (\om+dd^c \f )^n -
\int _{\{\f > \psi \}} (\om+dd^c \f )^n  \\
=  \int _{\{\f \leq \psi\}} (\om+dd^c \f)^n.
$$
If $\p \in \widehat{DMA} (X,\om)$ is not bounded, we apply the
previous inequality to $\f$ and $\p_j := \max(\p , - j)$ for $j
\in \N$. Then we get for any $j \in \N$
$$ \int _{\{\f < \psi_j \}} (\omega + dd^c \psi_j )^n  \leq  \int _{\{\f \leq \psi_j\}} (\om+dd^c \f)^n.$$
Since $\{\f < \p \} \cap \{\p > - j\} \subset \{\f < \p_j \}$ for
any $j$ and $\{\f \leq \psi_j\}$ is a decreasing sequence of Borel
sets converging to the Borel set $\{\f \leq \p \}$,  by taking the
limit in the previous inequalities and applying Corollary
\ref{C:C18}, we obtain
$$ \int _{\{\f < \psi\}} (\omega + dd^c \psi )^n  \leq \int _{\{\f \leq \psi\}} (\om+dd^c \f)^n.$$ Now take  $0<
\varepsilon <1$ and apply the previous result with $\f + \e$ and
$\psi$. Then let $\e \to 0$ to obtain  the required inequality.
\end{proof}

\begin{rqe} Let $\f\in PSH(X,\om)$ and
$\f_j:=\max(\f,-j)$. It follows as in \cite{GZ2} that
$\mu_j^\ell(\f)=\1_{\{\f >-j \}} (\om+dd^c \f_j)^\ell$ is an
increasing sequence of positive currents of mass at most $\int_X
\om^n$, so it converges in the strong sense to a positive current
$\mu^\ell(\f)$ on $X$. We leave it to the reader to check that
Proposition \ref{P:P16}, Corollary \ref{C:C18} and Theorem
\ref{T:comparison} have analogues in the classes
$DMA_\ell(X,\om)$. For instance, if $\f\in DMA_\ell(X,\om)$ and
$e^\f$ is continuous, or if $\f\in \widehat{DMA}_\ell(X,\om)$,
then $\mu^\ell(\f)={\bf 1}_{\{\f>-\infty\}}\om_\f^\ell$. Moreover,
if $\f, \p\in \widehat{DMA}_\ell(X,\om)$ then
$$\int _{\{  \f < \psi \}
} \omega_{\psi}^\ell\wedge\om^{n-\ell} \leq \int _{\{ \f < \psi \}
\cup \{\f = -\infty\} }\omega_{\f}^\ell\wedge\om^{n-\ell}.$$
\end{rqe}

\begin{rqe}
Observe that in dimension $n=1$,
$$
PSH(X,\om)=DMA(X,\om)=\widehat{DMA}(X,\om).
$$
The latter equality follows easily by integrating by parts and
using the monotone convergence theorem. We expect that the
equality $\widehat{DMA}(X,\om)=DMA(X,\om)$ continues to hold in
higher dimension.
\end{rqe}

\subsubsection{The class ${\mathcal E}(X,\om)$}

We consider here
$$
{\mathcal E}(X,\om)=\left\{\f\in
PSH(X,\om)\,/\,\mu(\f)(X)=\int_X\om^n\right\}.
$$
Equivalently, $\f \in {\mathcal E}(X,\om)$ if and only if
$(\om+dd^c \f_j)^n(\{\f\leq-j\})\rightarrow 0$. This class of
functions was studied in \cite{GZ2}, where it was shown that
\par $(i)$ ${\mathcal E}(X,\om) \subset DMA(X,\om)$;
\par $(ii)$ ${\mathcal E}(X,\om)$ is convex and stable under maximum;
\par $(iii)$ ${\mathcal E}(X,\om)$ is the largest subclass of
$DMA(X,\om)$ on which the comparison principle holds.

The rough idea is that a $\om$-psh function $\f$ should
belong to ${\mathcal E}(X,\om)$
iff it belongs to $DMA(X,\om)$ and its Monge-Amp\`ere measure does not
charge pluripolar sets.
This is partly justified by the following result:

\begin{pro}\label{P:P112}
A function $\f$ belongs to ${\mathcal E}(X,\om)$ if and only if
it belongs to $\widehat{DMA}(X,\om)$ and
$(\om+dd^c \f)^n$ does not charge pluripolar sets.
\end{pro}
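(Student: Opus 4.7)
My plan is to prove both implications separately, using Corollary \ref{C:C18} to relate $\mu(\f)$ to $\om_\f^n$ and invoking structural properties of $\mathcal{E}(X,\om)$ from \cite{GZ2}.

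For $(\Leftarrow)$: Assume $\f \in \widehat{DMA}(X,\om)$ and $\om_\f^n$ does not charge pluripolar sets. Corollary \ref{C:C18} gives $\mu(\f) = \1_{\{\f > -\infty\}}\om_\f^n$; since $\{\f = -\infty\}$ is pluripolar and carries no $\om_\f^n$-mass by hypothesis, this simplifies to $\mu(\f) = \om_\f^n$. Testing the defining convergence of $\widehat{DMA}(X,\om)$ against the constant bounded $\om$-psh function $u \equiv 1$ yields
$$\om_\f^n(X) = \lim_j \int_X (\om + dd^c \f_j)^n = \int_X \om^n,$$
hence $\mu(\f)(X) = \int_X \om^n$, i.e., $\f \in \mathcal{E}(X,\om)$.

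For $(\Rightarrow)$: Assume $\f \in \mathcal{E}(X,\om)$. The non-charging of pluripolar sets by $\om_\f^n$ is a classical property of $\mathcal{E}(X,\om)$ established in \cite{GZ2}, following from the comparison principle of item (iii) above. It remains to show $\f \in \widehat{DMA}(X,\om)$: given $\f_j \in PSH(X,\om) \cap L^\infty(X)$ decreasing to $\f$ and $u \in PSH(X,\om) \cap L^\infty(X)$, we must prove $\int u \, \om_{\f_j}^n \to \int u \, \om_\f^n$. Using Demailly regularization, choose continuous $(1+\e_\nu)\om$-psh functions $u_\nu \searrow u$. Since $\f \in DMA(X,\om)$, the weak convergence $\om_{\f_j}^n \to \om_\f^n$ applied to each continuous $u_\nu$, combined with $u \leq u_\nu$ and monotone convergence in $\nu$, yields $\limsup_j \int u \, \om_{\f_j}^n \leq \int u \, \om_\f^n$.

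The main obstacle is the reverse inequality, since Portmanteau gives only one-sided bounds when $u$ is merely upper semicontinuous. Here one exploits the total mass conservation $\int \om_{\f_j}^n = \int \om^n = \int \om_\f^n$, where the last equality follows from $\mu(\f) = \om_\f^n$ for $\f \in \mathcal{E}(X,\om)$ (obtained by sandwiching $\int \om^n = \mu(\f)(X) \leq \om_\f^n(X) \leq \int \om^n$ via Proposition \ref{P:P16} and the weak-convergence upper bound on $\om_\f^n(X)$), together with the finer continuity results for the Monge-Amp\`ere operator on $\mathcal{E}(X,\om)$ established in \cite{GZ2}, which give convergence against bounded $\om$-psh test functions along the canonical approximation $\max(\f,-k)$. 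A diagonal argument in the spirit of the proof of Proposition \ref{P:dec} then extends the convergence to arbitrary decreasing sequences $\f_j$, completing the argument.
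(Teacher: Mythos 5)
Your backward implication coincides with the paper's: Corollary \ref{C:C18} gives $\mu(\f)={\bf 1}_{\{\f>-\infty\}}\,\om_\f^n=\om_\f^n$, which has full mass $\int_X\om^n$, hence $\f\in{\mathcal E}(X,\om)$. That half is correct. The paper disposes of the forward implication by invoking Theorem \ref{T:econvergence} (with $T=[X]$ and a weight $\chi\in{\mathcal W}^-$ such that $\f\in{\mathcal E}_\chi(X,\om)$, which exists by \cite{GZ2}), and it is exactly there that your sketch has a genuine gap.

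The hard point is the inequality $\liminf_j\int_X u\,\om_{\f_j}^n\ge\int_X u\,\om_\f^n$ for an \emph{arbitrary} bounded decreasing sequence $\f_j\searrow\f$ and $u$ bounded $\om$-psh. Mass conservation does not produce it: for a merely upper semicontinuous $u$ it only reinforces the $\limsup$ bound you already have. And the diagonal argument of Proposition \ref{P:dec} does not transplant: there one chooses $k_j$ so that $\tilde\f_j=\max(\f_j,-k_j)$ is an auxiliary bounded sequence decreasing to $\f$ and then falls back on the \emph{definition} of $DMA(X,\om)$ tested against a fixed \emph{continuous} function; here the $\f_j$ are already bounded (so truncation at level $-k$ is eventually vacuous for each fixed $j$), the test function is only u.s.c., and convergence against bounded $\om$-psh test functions along arbitrary bounded decreasing sequences is precisely what must be proved --- there is no definition to invoke. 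What actually closes the argument is the quantitative tail estimate, uniform in $j$,
$$
\left|\int_X u\,\om_{\f_j}^n-\int_X u\,\om_{\max(\f_j,-k)}^n\right|
\le\frac{2\|u\|_{L^\infty(X)}}{|\chi(-k)|}\int_X(-\chi\circ\f_j)\,\om_{\f_j}^n,
$$
whose right-hand side is bounded uniformly in $j$ because $\f\in{\mathcal E}_\chi(X,\om)$ controls the $\chi$-energies of the whole sequence. Together with Bedford--Taylor continuity for the bounded functions $\max(\f_j,-k)\searrow\max(\f,-k)$ at fixed $k$ and the canonical-approximant case from \cite{GZ2}, this three-term decomposition is the proof of Theorem \ref{T:econvergence}. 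Your proposal names the right ingredients but replaces the one step that carries all the weight --- the uniformity in $j$ of the $k\to+\infty$ limit --- with a diagonal argument that cannot supply it.
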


\begin{proof}
The inclusion ${\mathcal E}(X,\om) \subset \widehat{DMA}(X,\om)$
will follow from Theorem \ref{T:econvergence} below. Assume
conversely that $\f \in \widehat{DMA}(X,\om)$ is such that
$(\om+dd^c \f)^n$ does not charge pluripolar sets. Then
$$
\mu(\f)={\bf 1}_{\{\f>-\infty\}} (\om+dd^c \f)^n=(\om+dd^c \f)^n
$$
has full mass, hence $\f \in {\mathcal E}(X,\om)$.
\end{proof}

\begin{rqe}
The previous characterization of ${\mathcal E}(X,\om)$ is related
to the question of uniqueness of solutions to the equation
$(\om+dd^c \cdot)^n=\mu$. Indeed assume $\f \in DMA(X,\om)$ is
such that $\mu:=(\om+dd^c \f)^n$ does not charge pluripolar sets.
It was shown in \cite{GZ2} that there exists $\p \in {\mathcal
E}(X,\om)$ so that $\mu=(\om+dd^c \p)^n$. It is expected that the
solution $\p$ is unique up to an additive constant. If such is the
case, then $\f \equiv \p+constant$ belongs to ${\mathcal
E}(X,\om)$.
\end{rqe}

\section{Finite energy classes}\label{S:E}

In this section we establish further properties of the class
${\mathcal E}(X,\om)$ and we consider energy classes with respect
to a fixed current $T$.

\subsection{Weighted energies}\label{SS:WE}

Let $T$ be a positive closed current of bidimension $(m,m)$ on
$X$. In the sequel $T$ will be of the form $T=(\om+dd^c u_{m+1})
\wedge \cdots \wedge (\om+dd^c u_n)$, where $u_j \in PSH(X,\om)
\cap L^{\infty}(X)$. Let $\chi:\R \rightarrow \R$ be a convex
increasing function such that $\chi(-\infty)=-\infty$. Following
\cite{GZ2} we consider
$$
{\mathcal E}_{\chi}(T,\om):=\left\{ \f \in PSH(X,\om) \, / \,
\sup_j \int_X (-\chi) \circ \f_j
\, \om_{\f_j}^m \wedge T <+\infty \right\},
$$
where $\f_j:=\max(\f,-j)$ denote the canonical approximants of
$\f$. We let the reader check that \cite{GZ2} can be adapted line
by line, showing that the Monge-Amp\`ere measure $(\om+dd^c \f)^m
\wedge T$ is well defined for $\f \in {\mathcal E}_{\chi}(T,\om)$
and that $\chi \circ \f \in L^1((\om+dd^c \f)^m \wedge T)$.

When $m=n$, i.e. $T=[X]$ is the current of integration along $X$, then
the classes
${\mathcal E}_{\chi}(T,\om)={\mathcal E}_{\chi}(X,\om)$ yield
the following alternative description
of ${\mathcal E}(X,\om)$
(see \cite[Proposition 2.2]{GZ2}):
$$
{\mathcal E}(X,\om)=\bigcup_{\chi \in {\mathcal W}^-} {\mathcal
E}_{\chi}(X,\om),\; {\mathcal E}_{\chi}(X,\om):=\left\{ \f
\in{\mathcal E}(X,\om): \, \chi \circ \f \in L^1(\om_\f^n)
\right\},$$ where ${\mathcal W}^-=\{\chi:\R \rightarrow
\R\,/\,\chi \;{\rm
convex,\;increasing},\;\chi(-\infty)=-\infty\}$. We set
$$
{\mathcal E}(T,\om):=\bigcup_{\chi \in {\mathcal W}^-}
{\mathcal E}_{\chi}(T,\om).
$$

\begin{thm}\label{T:econvergence}
Fix $\chi \in {\mathcal W}^-$.
Let $\f_j$ be a sequence of $\om$-psh functions decreasing
towards $\f \in {\mathcal E}_{\chi}(T,\om)$. Then $\f_j \in
{\mathcal E}_{\chi}(X,\om)$ and
$$
\int_X (-\chi) \circ \f_j \, (\om+dd^c \f_j)^m \wedge T
\longrightarrow \int_X (-\chi) \circ \f \, (\om+dd^c \f)^m \wedge T.
$$
Moreover
for any $u \in PSH(X,\om) \cap L^{\infty}(X)$,
$$
\int_X u (\om+dd^c \f_j)^m \wedge T \longrightarrow
\int_X u (\om+dd^c \f)^m \wedge T.
$$
\end{thm}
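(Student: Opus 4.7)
The plan is to adapt the convergence arguments of \cite{GZ2} to the $T$-weighted setting. Since $T$ is closed and positive and is never differentiated in the integration-by-parts manipulations, the a priori weighted-energy estimates of \cite{GZ2} transfer to ${\mathcal E}_\chi(T,\om)$ line by line. Throughout I may assume $\f \leq -1$ after subtracting a constant.

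The first step is to show $\f_j \in {\mathcal E}_\chi(T,\om)$ for every $j$. Set $\f_j^{(k)} := \max(\f_j,-k)$ and $\f^{(k)} := \max(\f,-k)$. Since $\f_j \geq \f$, the $T$-weighted comparison principle (the analogue of the fundamental estimate of \cite{GZ2}) yields
$$\int_X (-\chi)\circ \f_j^{(k)} \, \om_{\f_j^{(k)}}^m \wedge T \leq A \int_X (-\chi) \circ \f^{(k)} \, \om_{\f^{(k)}}^m \wedge T + B,$$
with $A,B$ depending only on $\chi$ and the total mass of $T$. Taking $\sup_k$ on both sides and using $\f \in {\mathcal E}_\chi(T,\om)$ gives $\f_j \in {\mathcal E}_\chi(T,\om)$, with a bound uniform in $j$.

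Next I prove the convergence of weighted energies. For fixed $k$, the bounded sequence $\f_j^{(k)} \searrow \f^{(k)}$ as $j\to\infty$, so by Bedford-Taylor $\om_{\f_j^{(k)}}^m \wedge T \to \om_{\f^{(k)}}^m \wedge T$ weakly; since $(-\chi) \circ \f_j^{(k)}$ is a decreasing sequence of bounded continuous functions, this implies
$$\int_X (-\chi)\circ\f_j^{(k)} \, \om_{\f_j^{(k)}}^m \wedge T \longrightarrow \int_X (-\chi)\circ\f^{(k)} \, \om_{\f^{(k)}}^m \wedge T.$$
To let $k\to\infty$ simultaneously I need the uniform tail estimate
$$\lim_{k\to\infty} \sup_j \left( \int_X (-\chi)\circ\f_j \, \om_{\f_j}^m \wedge T - \int_X (-\chi)\circ\f_j^{(k)} \, \om_{\f_j^{(k)}}^m \wedge T \right) = 0,$$
and the analogue with $\f$ in place of $\f_j$. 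I obtain this by splitting along $\{\f_j > -k\}$ (where the two measures coincide by locality) and applying the $T$-weighted comparison principle once more to bound the tail on $\{\f_j \leq -k\}$ by a corresponding tail for $\f$ on $\{\f \leq -k/2\}$, which vanishes as $k\to\infty$ because $(-\chi)\circ\f \in L^1(\om_\f^m \wedge T)$.

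Convergence against a bounded $\om$-psh test function $u$ then follows. At fixed $k$, Bedford-Taylor yields $\int_X u \, \om_{\f_j^{(k)}}^m \wedge T \to \int_X u \, \om_{\f^{(k)}}^m \wedge T$ as $j\to\infty$; the uniform Chebyshev estimate
$$\int_{\{\f_j \leq -k\}} \om_{\f_j^{(k)}}^m \wedge T \leq \frac{1}{-\chi(-k)} \int_X (-\chi)\circ\f_j^{(k)} \, \om_{\f_j^{(k)}}^m \wedge T \leq \frac{C}{-\chi(-k)},$$
(with $C$ the uniform bound from Step 1) together with the analogous bound for $\om_{\f_j}^m \wedge T$, kills the tail uniformly in $j$ via $\left|\int_X u(\om_{\f_j}^m - \om_{\f_j^{(k)}}^m)\wedge T\right| \leq \|u\|_\infty \int_{\{\f_j \leq -k\}} (\om_{\f_j}^m + \om_{\f_j^{(k)}}^m)\wedge T$. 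The main obstacle throughout is the weighted comparison principle in the $T$-setting: once this is established by reproducing the integration-by-parts argument of \cite{GZ2}, using only closedness and positivity of $T$, the rest is bookkeeping.
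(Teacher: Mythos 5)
Your overall architecture is sound and, for the second assertion (testing against bounded $\om$-psh functions $u$), it coincides with the paper's: the three-term splitting through the truncations $\f_j^{(k)}$, Bedford--Taylor convergence at fixed $k$, and the Chebyshev bound $\|u\|_\infty/|\chi(-k)|$ times the uniformly bounded $\chi$-energy, which your Step 1 (the fundamental inequality applied to $\f_j\geq\f$) correctly supplies. That part is fine.

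The gap is in your proof of the first assertion, at the uniform tail estimate. After the locality reduction, what must be shown is
$$
\sup_j\int_{\{\f_j\leq-k\}}\bigl[(-\chi)\circ\f_j-(-\chi)(-k)\bigr]\,\om_{\f_j}^m\wedge T\longrightarrow0
\quad\text{as }k\to+\infty,
$$
i.e.\ uniform integrability of $(-\chi)\circ\f_j$ with respect to the varying measures $\om_{\f_j}^m\wedge T$. You propose to obtain this by ``applying the $T$-weighted comparison principle once more'' to dominate this tail by a tail of $\f$ on $\{\f\leq-k/2\}$. But the fundamental inequality of \cite{GZ2} is available only for weights in ${\mathcal W}^-$ (or ${\mathcal W}^+_M$), and the weight implicit in your tail, $t\mapsto\min(\chi(t)-\chi(-k),0)$, is not convex (it has a concave kink at $t=-k$), so the comparison principle does not apply to it; nor does the pointwise bound $\{\f_j\leq-k\}\subset\{\f\leq-k\}$, $(-\chi)\circ\f_j\leq(-\chi)\circ\f$ rescue the argument, because the measure $\om_{\f_j}^m\wedge T$ still depends on $j$ and may a priori concentrate where $\f$ is very negative. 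This uniform integrability is precisely the delicate point, and the paper resolves it by a different mechanism: a de la Vall\'ee Poussin argument produces a convex superlinear $h$ of polynomial growth with $h(-\chi(\f))\in L^1(\om_\f^m\wedge T)$, hence $\f\in{\mathcal E}_{\tilde\chi}(T,\om)$ for $\tilde\chi:=-h(-\chi)$ with $\chi=o(\tilde\chi)$; the fundamental inequality for $\tilde\chi$ then gives $\sup_j\int_X h((-\chi)\circ\f_j)\,\om_{\f_j}^m\wedge T<+\infty$, which is exactly the uniform integrability needed to run [GZ2, Theorem 2.6] for an arbitrary decreasing sequence. You should either reproduce that stronger-weight argument or supply an actual proof of your localized tail estimate; as written, that step does not follow from the tools you invoke.
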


\begin{proof}
When $\f_j$ is the canonical sequence of approximants, the theorem
follows by a similar argument as in \cite[Theorem 2.6]{GZ2}.
Moreover, it also follows  from \cite[Theorem 2.6]{GZ2} that the
first convergence in the statement holds for an arbitrary
decreasing sequence, if there exists a weight function $\tilde
\chi$ such that $\chi = o (\tilde \chi)$ and $\f \in \mathcal
E_{\tilde \chi} (T,\om)$. It turns out that such a weight always
exists. Indeed, since $\chi (\f) \in L^1 (\omega_{\f}^m \wedge
T)$, it follows from standard measure theory arguments that there
exists a convex increasing function $h : \R^+ \to \R^+$ such that
$\lim_{t \to + \infty} h (t) \slash t = + \infty$ and $ h (- \chi
(\f)) \in L^1 (\omega_{\f}^m \wedge T)$. Note that we can assume
that $h$ has polynomial growth, which implies that $\f \in
\mathcal E_{\tilde \chi} (T,\om)$, where $\tilde \chi := - h (-
\chi) \in {\mathcal W}^- \cup {\mathcal W}_M^+$ (see \cite{GZ2}
for the definition of this latter class). \vskip.1cm

We now prove the second assertion. Set $\f_j^k := \max(\f_j , -
k)$ and $\f^k := \max(\f , - k)$. Then
\begin{eqnarray*}
\int_X u \, \omega_{\f_j}^m \wedge T -  u \, \omega_{\f}^m \wedge T
 &=& \int_X u \, \omega_{\f_j}^m \wedge T -  \int_X  u \, \omega_{\f_j^k}^m \wedge T \\
 &+& \int_X u \, \omega_{\f_j^k}^m \wedge T- \int_X  u \, \omega_{\f^k}^m \wedge T \\
& + & \int_X  u \, \omega_{\f^k}^m \wedge T - \int_X  u \, \omega_{\f}^m \wedge T.
\end{eqnarray*}
 It suffices to prove that
$\int_X u \, \omega_{\f_j}^m \wedge T -  \int_X  u \, \omega_{\f_j^k}^m \wedge T $
converges to $0$ as $k \to + \infty$ uniformly in $j$.
This is a consequence of the following estimate,
$$
\left|  \int_X u \, \omega_{\f_j}^m \wedge T -  \int_X  u \, \omega_{\f_j^k}^m \wedge T \right|
\leq \frac{ 2 \Vert u \Vert_{L^{\infty} (X)}}{\vert \chi (- k)\vert} \int_{X}( - \chi (\f_j))
\omega_{\f_j}^m \wedge T.
$$
The latter integral is uniformly bounded since $\f \in {\mathcal E}_{\chi}(T,\om)$.
\end{proof}

Let ${\mathcal E}^1(\om^{n-p},\om)$ denote the class ${\mathcal
E}_{\chi}(T,\om)$ for $T=\om^{n-p}$ and $\chi(t)=t$.

\begin{cor}\label{C:econvergence} Let  $1 \leq p \leq n - 1$ and
let $\{\f_j\}$ be a
decreasing sequence of $\om$-psh functions converging to $\f \in
\mathcal E^1 (\omega^{n - p},\omega)$.  Then for any $u \in
PSH(X,\omega) \cap L^{\infty} (X)$ we have
$$
\lim_{j \to + \infty} \int_X u \,\omega_{\f_j}^{p + 1} \wedge
\om^{n - p - 1} =  \int_X u \,\omega_{\f}^{p + 1} \wedge \om^{n -
p - 1}.
$$
In particular, $\mathcal E^1 (\omega^{n - p},\omega) \subset
\widehat {DMA}_{p + 1}(X,\om)$.
\end{cor}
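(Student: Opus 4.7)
The plan is to reduce the $(p+1)$-st order convergence to $p$-th order convergence, which is directly supplied by Theorem \ref{T:econvergence}, via a single integration by parts that trades $dd^c\f_j$ for $dd^c u$. We may assume the $\f_j$ are bounded $\om$-psh. Setting $\om_u := \om + dd^c u \geq 0$, the Bedford--Taylor integration by parts (valid since $u$ and $\f_j$ are bounded) yields
\begin{equation*}
\int_X u\,\om_{\f_j}^{p+1}\wedge\om^{n-p-1} = \int_X u\,\om_{\f_j}^{p}\wedge\om^{n-p} + \int_X \f_j\,\om_{\f_j}^{p}\wedge\om_u\wedge\om^{n-p-1} - \int_X \f_j\,\om_{\f_j}^{p}\wedge\om^{n-p}.
\end{equation*}
Every piece on the right has the form $\int_X v\,\om_{\f_j}^{p}\wedge T$, where $T$ is a positive closed current of bidimension $(p,p)$ of the form required in \S\ref{SS:WE} (either $T = \om^{n-p}$ or $T_u := \om_u \wedge \om^{n-p-1}$), and $v$ is either bounded $\om$-psh or matches the linear weight $\chi(t)=t$.

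The case $T = \om^{n-p}$ is immediate from $\f \in \mathcal E^1(\om^{n-p},\om)$. For $T = T_u$ we must verify $\f \in \mathcal E^1(T_u,\om)$: decomposing $T_u = \om^{n-p} + dd^c u \wedge \om^{n-p-1}$ and integrating by parts against the canonical approximants $\f^k := \max(\f,-k)$ gives
\begin{equation*}
\int_X (-\f^k)\,\om_{\f^k}^{p}\wedge T_u = \int_X (-\f^k)\,\om_{\f^k}^{p}\wedge\om^{n-p} + \int_X u\,(\om-\om_{\f^k})\wedge\om_{\f^k}^{p}\wedge\om^{n-p-1},
\end{equation*}
uniformly bounded in $k$ (the first term by hypothesis, the second in absolute value by $\|u\|_{L^\infty}\int_X\om^n$). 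Applying Theorem \ref{T:econvergence} to each of the three pieces then produces
\begin{equation*}
\lim_j \int_X u\,\om_{\f_j}^{p+1}\wedge\om^{n-p-1} = \int_X u\,\om_\f^{p}\wedge\om^{n-p} + \int_X \f\,\om_\f^{p}\wedge\om_u\wedge\om^{n-p-1} - \int_X \f\,\om_\f^{p}\wedge\om^{n-p}.
\end{equation*}

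Specializing to the canonical approximants $\f_j = \f^j$ shows that the measures $\om_{\f^j}^{p+1}\wedge\om^{n-p-1}$ converge weakly; this limit defines $\om_\f^{p+1}\wedge\om^{n-p-1}$, and the displayed identity then establishes the stated convergence for an arbitrary sequence $\f_j\searrow\f$. Running the same argument with $\om^{n-p-1}$ replaced by an arbitrary smooth strongly positive closed $(n-p-1,n-p-1)$-test form yields the weak convergence of $\om_{\f_j}^{p+1}$ as a current, so $\f \in DMA_{p+1}(X,\om)$ and therefore $\f \in \widehat{DMA}_{p+1}(X,\om)$. The principal obstacle is the auxiliary energy bound $\f \in \mathcal E^1(T_u,\om)$, where the boundedness of $u$ plays a decisive role.
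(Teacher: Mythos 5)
Your reduction is the same one the paper uses: a single Bedford--Taylor integration by parts trading $dd^c\f_j$ for $dd^c u$, the auxiliary energy bound $\mathcal E^1(\om^{n-p},\om)\subset\mathcal E^1(\om_u\wedge\om^{n-p-1},\om)$ obtained by integrating by parts against the canonical approximants, and three applications of Theorem \ref{T:econvergence}. Those steps are correct (up to an immaterial factor of $2$ in the bound $\Vert u\Vert_{L^\infty}\int_X\om^n$). The problem is the final step. What you have actually proved is that
$$
\lim_{j}\int_X u\,\om_{\f_j}^{p+1}\wedge\om^{n-p-1}
=\int_X u\,\om_{\f}^{p}\wedge\om^{n-p}+\int_X\f\,\om_{\f}^{p}\wedge\om_u\wedge\om^{n-p-1}-\int_X\f\,\om_{\f}^{p}\wedge\om^{n-p}
$$
for every decreasing sequence, with a sequence-independent right-hand side. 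The corollary asserts that this limit equals $\int_X u\,d\nu$, where $\nu$ is the weak limit of the measures $\om_{\f^k}^{p+1}\wedge\om^{n-p-1}$ along the canonical approximants; so you still owe the identification of the three-term expression with $\int_X u\,d\nu$. You treat this as automatic from the definition of $\nu$ as a weak limit, but $u$ is only upper semicontinuous, and weak convergence of measures yields just one inequality: for $u\leq 0$ u.s.c. one gets $\limsup_j\int_X u\,d\mu_j\leq\int_X u\,d\nu$, which could a priori be strict. Indeed, the fact that convergence against continuous test functions does not automatically upgrade to convergence against bounded $\om$-psh (merely u.s.c.) test functions is exactly the content of the corollary and of the class $\widehat{DMA}$.

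The missing ingredient is the integration-by-parts identity for the \emph{unbounded} limit function $\f$ itself, i.e.\ the paper's (\ref{e:intpartsE1}): $\int_X(-u)\,d\nu$ equals the three-term expression. The paper proves this by a two-sided squeeze. One inequality comes from the upper semicontinuity of $u$ together with the weak convergence $\om_{\f^k}^{p+1}\wedge\om^{n-p-1}\to\nu$. For the reverse inequality one regularizes $u$ by a decreasing sequence of \emph{smooth} $\om$-psh functions $u_k\searrow u$ (using \cite{BK07}), applies the identity to the continuous $u_k$ (for which weak convergence suffices), and lets $k\to\infty$ using monotone convergence, the weak convergence $\om_{u_k}\wedge\om_{\f}^{p}\wedge\om^{n-p-1}\to\om_{u}\wedge\om_{\f}^{p}\wedge\om^{n-p-1}$, and the lower semicontinuity of $-\f$. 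Without this extra argument your proof establishes only that the limits exist and are sequence-independent, not that they equal $\int_X u\,\om_\f^{p+1}\wedge\om^{n-p-1}$.
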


\begin{proof} For simplicity, we consider the case $p = n - 1$.
The general case follows along the same lines. We want to prove
that if $\f_j\searrow\f \in \mathcal E^1 (\omega,\omega)$ then for
any $u \in PSH(X,\omega) \cap L^{\infty} (X)$ we have

$$ \lim_{j \to + \infty} \int_X u\, \omega_{\f_j}^{n} =
\int_X u \, \omega_{\f}^n.$$

Observe that since $\omega_{\f_j}^n \to \omega_{\f}^n$ in the weak
sense of Radon measures on X,  the above equality holds when $u$
is continuous on $X$.

We claim that  for any  $\psi \in \mathcal E^1 (\omega,\omega)$,
we have
\begin{equation}\label{e:intpartsE1}
\int_X (-  u) \omega_{\psi}^n = \int_X(- u) \omega \wedge
\omega_{\psi}^{n - 1}
 + \int_X (- \psi) \omega_u  \wedge \omega_{\psi}^{n - 1} + \int_X \psi
 \omega \wedge \omega_{\psi}^{n - 1}.
\end{equation}
Observe that this identity is just integration by parts which
clearly holds when $u$ is a smooth test function on $X$. The
identity also holds when $u, \psi \in PSH (X,\om) \cap L^{\infty}
(X)$ (see \cite{GZ2}).

Fix $u$ a bounded $\om$-psh function and set $\psi_j := \max
\{\psi , - j\}$ for $j \in \N$. Applying (\ref{e:intpartsE1}) to
$u$ and $\psi_j$, it follows immediately that $\mathcal E^1
(\omega_u,\omega) = \mathcal E^1 (\omega,\omega)$. Hence the
corollary follows at once from Theorem \ref{T:econvergence}, as
soon as (\ref{e:intpartsE1}) is established for $\psi \in \mathcal
E^1 (\omega,\omega)$.

To prove (\ref{e:intpartsE1}) we can assume that $u \leq 0$. Note
that $\om_{\psi_j}^n \to\om_{\psi}^n$ weakly. Using the upper
semicontinuity of $u$ and applying (\ref{e:intpartsE1}) to $u$ and
$\psi_j$, it follows from Theorem 2.1  that
\begin{eqnarray}\label{e:intpartsineq1}
&&\int_X (-  u) \omega_{\psi}^n\leq\liminf_{j \to + \infty}
\int_X (-  u) \omega_{\psi_j}^n= \\
&& \int_X  (- u)
 \omega \wedge \omega_{\psi}^{n - 1} + \int_X (- \psi) \omega_u  \wedge
 \omega_{\psi}^{n - 1} + \int_X \psi  \omega \wedge \omega_{\psi}^{n -
 1}.\nonumber
 \end{eqnarray}

Next let $u_j\searrow u$ be a sequence of smooth $\om$-psh
functions (see \cite{BK07}). Note that $ u_j \,\omega_{\psi}^{n -
1} \to  u \,\omega_{\psi}^{n - 1}$ in the weak sense of currents,
hence $ \omega_{u_j} \wedge \omega_{\psi}^{n - 1} \to
\omega_{u}\wedge \omega_{\psi}^{n - 1}$ weakly in the sense of
measure. Applying (\ref{e:intpartsE1}) to $u_j$ and $\psi$, it
follows by monotone convergence and the upper semicontinuity of
$\psi$ that
\begin{eqnarray}\label{e:intpartsineq2}
&&\int_X (- u) \omega_{\psi}^n= \\
&&\int_X  (- u) \omega \wedge \omega_{\psi}^{n - 1}+\lim_{j \to +
\infty} \int_X (-\psi) \omega_{u_j} \wedge \omega_{\psi}^{n - 1} +
\int_X\psi\omega \wedge \omega_{\psi}^{n - 1}\geq\nonumber\\
&&\int_X  (- u) \omega \wedge \omega_{\psi}^{n - 1}+\int_X (-\psi)
\omega_u \wedge \omega_{\psi}^{n - 1} + \int_X \psi\omega \wedge
\omega_{\psi}^{n - 1}.\nonumber
\end{eqnarray}

The identity (\ref{e:intpartsE1}) now follows from the
inequalities (\ref{e:intpartsineq1}) and (\ref{e:intpartsineq2}).
\end{proof}

\begin{cor}\label{C:EDMAhat}
$
{\mathcal E}(X,\om) \subset \widehat{DMA}_{\leq n}(X,\om).
$
\end{cor}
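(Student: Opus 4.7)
The plan is to reduce the claim to Theorem~\ref{T:econvergence}. Fix $\f \in \mathcal{E}(X,\om)$. By the characterization $\mathcal{E}(X,\om) = \bigcup_{\chi \in \mathcal{W}^-}\mathcal{E}_\chi(X,\om)$ recalled from \cite{GZ2} at the beginning of this section, I would first pick a weight $\chi \in \mathcal{W}^-$ such that
\[
M := \sup_j \int_X (-\chi)(\f_j)\,\om_{\f_j}^n < \infty,
\]
where $\f_j := \max(\f,-j)$ are the canonical approximants; after subtracting a constant we may assume $\f \leq 0$. The goal is then to show $\f \in \mathcal{E}_\chi(\om^{n-\ell},\om)$ for every $\ell \in \{1,\dots,n\}$. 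Once this is in hand, Theorem~\ref{T:econvergence} applied with $T = \om^{n-\ell}$ (of bidimension $(\ell,\ell)$, so $m=\ell$) yields
\[
\int_X u\,\om_{\f_j}^\ell \wedge \om^{n-\ell} \longrightarrow \int_X u\,\om_\f^\ell \wedge \om^{n-\ell}
\]
for every sequence $\f_j \in PSH(X,\om)\cap L^{\infty}(X)$ decreasing to $\f$ and every $u \in PSH(X,\om)\cap L^{\infty}(X)$, which is exactly the defining convergence of $\widehat{DMA}_\ell(X,\om)$.

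The crucial step is a monotonicity in $\ell$ of the weighted energies
\[
I_\ell(\psi) := \int_X (-\chi)(\psi)\,\om_\psi^\ell \wedge \om^{n-\ell}
\]
for bounded $\om$-psh $\psi \leq 0$. Using
\[
\om_\psi^\ell \wedge \om^{n-\ell} - \om_\psi^{\ell-1} \wedge \om^{n-\ell+1} = dd^c\psi \wedge \om_\psi^{\ell-1}\wedge\om^{n-\ell}
\]
and integrating by parts (Stokes on the closed manifold $X$, applied to the $(2n-1)$-current $(-\chi)(\psi)\,d^c\psi \wedge \om_\psi^{\ell-1}\wedge\om^{n-\ell}$, and exploiting that $\om_\psi$ and $\om$ are closed) I obtain
\[
I_\ell(\psi) - I_{\ell-1}(\psi) = \int_X \chi'(\psi)\,d\psi\wedge d^c\psi \wedge \om_\psi^{\ell-1}\wedge\om^{n-\ell} \geq 0,
\]
since $\chi'\geq 0$ (as $\chi$ is increasing) and $d\psi\wedge d^c\psi$ is a positive $(1,1)$-form. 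This identity is first checked for smooth $\om$-psh $\psi$ (available via \cite{BK07}) and then extended to bounded $\om$-psh $\psi$ by Bedford--Taylor continuity of mixed Monge--Amp\`ere products under decreasing sequences of bounded weights. Specializing to $\psi = \f_j$ yields $I_\ell(\f_j) \leq I_n(\f_j) \leq M$ uniformly in $j$, hence $\f \in \mathcal{E}_\chi(\om^{n-\ell},\om)$.

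The main obstacle I anticipate is the prerequisite $\f \in DMA_\ell(X,\om)$ built into the definition of $\widehat{DMA}_\ell$: one needs $\om_\f^\ell$ to exist as a positive closed current of bidegree $(\ell,\ell)$ which is the weak limit of $\om_{\f_j}^\ell$ along \emph{every} decreasing bounded approximation, and not merely the trace measure $\om_\f^\ell\wedge\om^{n-\ell}$ that Theorem~\ref{T:econvergence} directly delivers. This is already encoded in the weighted-energy machinery of \cite{GZ2}: for $\f$ in the appropriate class, the mixed products $(\om+dd^c\f)^\ell \wedge (\om+dd^c u_{\ell+1})\wedge\cdots\wedge(\om+dd^c u_n)$ are well defined as positive measures for all $u_{\ell+1},\dots,u_n \in PSH(X,\om)\cap L^{\infty}(X)$, together with continuity under decreasing sequences. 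The same Stokes computation above adapts to show that $\f$ belongs to $\mathcal{E}_\chi(\om_{u_{\ell+1}}\wedge\cdots\wedge\om_{u_n},\om)$ for every choice of bounded $\om$-psh $u_i$, and testing against smooth $u_i$ (which are $C\om$-psh for some $C>0$) pins down $\om_\f^\ell$ as a genuine bidegree-$(\ell,\ell)$ current, thereby disposing of this last subtlety and giving $\f \in \widehat{DMA}_{\leq n}(X,\om)$.
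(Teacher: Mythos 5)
Your proof is correct and follows essentially the same route as the paper: fix $\chi\in{\mathcal W}^-$ with $\f\in{\mathcal E}_\chi(X,\om)$, establish the inclusion ${\mathcal E}_\chi(X,\om)\subset{\mathcal E}_\chi(\om^p,\om)$ by integration by parts, and then invoke Theorem~\ref{T:econvergence} for each $T=\om^{n-\ell}$. The paper merely cites \cite{GZ2} for the integration-by-parts step that you carry out explicitly via the monotonicity of $I_\ell$, and your closing remark on pinning down $\om_\f^\ell$ as a current by varying $T$ over products of bounded $\om$-psh potentials is exactly what the paper's phrase ``apply Theorem~\ref{T:econvergence} several times'' leaves implicit.
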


\begin{proof} Fix $\f \in {\mathcal E}(X,\om)$.
Then there exists $\chi \in {\mathcal W}^-$ such that $\f \in {\mathcal E}_{\chi}(X,\om)$.
Recall now that for any $1 \leq p \leq n-1$,
$$
{\mathcal E}_{\chi}(X,\om) \subset {\mathcal E}_{\chi}(\om^p,\om),
$$
as follows from simple integration by parts (see \cite{GZ2}). We
can thus apply Theorem \ref{T:econvergence} several times to
conclude.
\end{proof}

\subsection{Twisted Monge-Amp\`ere operators}\label{SS:twisted}
We show here that certain Monge-Amp\`ere operators with weights
are always well defined.

\begin{thm}\label{T:tma1}
Let $\eta:\R\rightarrow \R^+$ be a continuous function with
$\eta(-\infty)=0$ and $1 \leq \ell \leq n$. Let $\f \in
PSH(X,\om)$ and $\{\f_j\}$ be any sequence of bounded $\om$-psh
functions decreasing to $\f$. Then the twisted Monge-Amp\`ere currents
$$
M_{\eta}^{\ell}(\f_j):=\eta \circ \f_j (\om+dd^c \f_j)^{\ell}
$$
converge weakly towards a positive current $M_{\eta}^{\ell}(\f)$,
which is independent of the sequence $\{\f_j\}$. The twisted
Monge-Amp\`ere operator $M_{\eta}^{\ell}$ is well defined on
$PSH(X,\om)$ and is continuous along any decreasing sequences of
$\om$-psh functions. If $\f \in \widehat{DMA}_{\ell}(X,\om)$ then
$$
M_{\eta}^{\ell}(\f)= \eta\circ\f\, (\om+dd^c \f)^\ell.
$$
\end{thm}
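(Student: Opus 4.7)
The plan is to define $M_{\eta}^{\ell}(\f)$ as the weak limit of the twisted currents $T_k := \eta(\phi^k)(\om+dd^c \phi^k)^{\ell}$ associated to the canonical approximants $\phi^k := \max(\f,-k) \in PSH(X,\om) \cap L^{\infty}(X)$, and then to verify that any bounded decreasing approximation of $\f$ produces the same limit. For $k<k'$, the bounded $\om$-psh functions $\phi^k$ and $\phi^{k'}$ coincide on the open set $\{\f>-k\}$, so locality of the Bedford--Taylor Monge--Amp\`ere operator gives $T_k=T_{k'}$ there. On the complementary closed set $\{\f\leq -k\}$, both weights are bounded by $\eta^{*}(k):=\sup_{t\leq -k}\eta(t)$, which tends to $0$ by assumption. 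Combined with the uniform mass bound $\int_X (\om+dd^c\phi^k)^{\ell}\wedge\om^{n-\ell}=\int_X \om^n$, this yields, for every smooth test form $\alpha$ of bidegree $(n-\ell,n-\ell)$, an estimate
$$
|\langle T_{k'}-T_k,\alpha\rangle|\leq C_\alpha\,\eta^{*}(k),
$$
so $\{T_k\}$ is Cauchy and converges weakly to a positive current $T$.

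Given an arbitrary bounded decreasing sequence $\f_j\searrow\f$, I repeat the splitting with $u_j^k:=\max(\f_j,-k)$. By locality on the open set $\{\f_j>-k\}$, the currents $\eta(\f_j)(\om+dd^c\f_j)^{\ell}$ and $\eta(u_j^k)(\om+dd^c u_j^k)^{\ell}$ coincide there; on the complement both have mass controlled by $C_\alpha\eta^{*}(k)$, uniformly in $j$. For fixed $k$, one has $u_j^k\searrow\phi^k$ in a uniformly bounded range as $j\to\infty$, so classical Bedford--Taylor continuity of the weighted Monge--Amp\`ere operator along bounded decreasing sequences (obtained, e.g., by uniform polynomial approximation of $\eta$ on the relevant compact interval) gives $\eta(u_j^k)(\om+dd^c u_j^k)^{\ell}\to T_k$ weakly. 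Letting $j\to\infty$ and then $k\to\infty$, the uniform-in-$j$ error estimate legitimizes the interchange of limits, yielding $M_{\eta}^{\ell}(\f_j)\to T$. Continuity along arbitrary (possibly unbounded) decreasing sequences $\p_j\searrow\p$ in $PSH(X,\om)$ then follows by a diagonal argument analogous to Proposition \ref{P:dec}, using truncations $\max(\p_j,-k_j)$ for a sufficiently fast $k_j\nearrow\infty$.

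For the identification when $\f\in\widehat{DMA}_{\ell}(X,\om)$, I invoke the analogue of Corollary \ref{C:C18} stated in the Remark after Theorem \ref{T:comparison}: the currents $\mu^{\ell}_k(\f):=\1_{\{\f>-k\}}(\om+dd^c\phi^k)^{\ell}$ form an increasing sequence of positive currents converging strongly to $\mu^{\ell}(\f)=\1_{\{\f>-\infty\}}(\om+dd^c\f)^{\ell}$. Since $\eta(\phi^k)=\eta(\f)$ on $\{\f>-k\}$, I can rewrite $\int_X \eta(\phi^k)(\om+dd^c\phi^k)^{\ell}\wedge\alpha$ as $\int_X \eta(\f)\,\mu^{\ell}_k(\f)\wedge\alpha$ plus a term of mass $O(\eta^{*}(k))$. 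Wedging with any smooth positive test form $\alpha$ yields an increasing sequence of positive Radon measures, so since $\eta\circ\f$ is a bounded Borel function that vanishes on $\{\f=-\infty\}$ (because $\eta(-\infty)=0$), monotone convergence gives
$$
\int_X\eta(\f)\,\mu^{\ell}_k(\f)\wedge\alpha\longrightarrow\int_X\eta(\f)(\om+dd^c\f)^{\ell}\wedge\alpha.
$$
This identifies $\langle T,\alpha\rangle$ with $\int_X\eta(\f)(\om+dd^c\f)^{\ell}\wedge\alpha$ for every smooth positive test $\alpha$, and by linearity for every smooth test form. The main technical obstacle lies in the continuity step of the second paragraph: with a merely continuous (not monotone or convex) weight $\eta$ one cannot directly combine the weak convergence $(\om+dd^c u_j^k)^{\ell}\to(\om+dd^c\phi^k)^{\ell}$ with semicontinuity of $\eta\circ u$, so a reduction via uniform polynomial approximation together with quasicontinuity of $\om$-psh functions is needed to pass the weight through the limit.
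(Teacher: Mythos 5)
Your proof is correct and follows essentially the same route as the paper's: both construct $M_\eta^\ell(\f)$ through the canonical approximants (equivalently, the strong limit current $\mu^\ell(\f)$ of ${\bf 1}_{\{\f>-k\}}\om_{\max(\f,-k)}^\ell$), exploit that the weight is bounded by $\sup_{t\le -k}\eta(t)\to 0$ on $\{\f_j\le -k\}$ together with the uniform mass bound to make the truncation error uniform in $j$, and reduce the remaining middle term to Bedford--Taylor continuity of the weighted operator for uniformly bounded decreasing sequences. The one point to tighten is that $\{\f>-k\}$ is only plurifine open (not open, since $\f$ is merely usc), so the locality you invoke must be the plurifine locality of \cite{BT87} --- which is precisely what the paper uses via its identity (\ref{e:nu}).
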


\begin{proof}
It is a standard fact that the operator
$M_{\eta}^{\ell}(\f)= \eta\circ\f\, (\om+dd^c \f)^\ell$
is well defined and continuous under decreasing sequences in
the subclass of bounded $\omega$-psh functions \cite{BT82}.

\par As it was observed in Remark 1.10,  the sequence
$$k\longmapsto{\bf 1}_{\{\varphi>-k\}}(\omega +
dd^c \max(\varphi,-k))^\ell$$ is an increasing sequence of
positive currents of total mass bounded by $\int_X \om^n$. Hence
this sequence converges in a strong sense to a positive current
$\mu^{\ell}(\varphi)$ which puts no mass on pluripolar sets and
satisfies (see \cite[Theorem 1.3]{GZ2})
\begin{equation}\label{e:nu}
{\bf 1}_{\{\varphi>-k\}}(\omega + dd^c \max (\varphi,-k))^\ell  =
{\bf 1}_{\{\varphi > - k\}} \mu^{\ell} (\varphi).
\end{equation}
 Since $\eta \circ \varphi$  is a bounded positive Borel function on $X$,
we can define a positive current on $X$
$$
M_{\eta}^{\ell}(\f) :=
\eta(\varphi)\,\mu^{\ell} (\varphi).
$$
Note that when $\varphi$ is bounded then
$\mu^{\ell}(\varphi) =\omega_\varphi^{\ell} $ hence $M_{\eta}^{\ell}(\f) =
\eta(\varphi)\ \omega_\varphi^{\ell}$.

To prove the continuity of this operator, let $\f_j$ be any
sequence of $\om$-psh functions decreasing to $\f$, and set
$\varphi_{j}^k := \max(\varphi_{j} , - k)$, $\varphi^k := \max
(\varphi , - k)$ for $j,k \in \N$.

Since the forms of type $(n-\ell,n-\ell)$ on $X$ have a basis
consisting of forms $h \Psi$, where $h$ are smooth functions
and $\Psi$ are positive closed forms, it suffices to prove that
$\eta(\varphi_j)\,\mu^{\ell} (\varphi_j) \wedge \Psi \to \eta(\varphi)\,
\mu^{\ell} (\varphi) \wedge \Psi$
weakly as measures on $X$.
For simplicity, we may assume that $\Psi=\omega^{n-\ell} $.
Then
\begin{eqnarray*}
 && \int h\,\eta(\varphi_j)\, \mu^{\ell} (\varphi_j) \wedge \om^{n - \ell}  -
\int h\,\eta(\varphi)\,\mu^{\ell} (\f) \wedge \om^{n - \ell}  \\
&& =  \int
h\,\eta(\varphi_j)\,\mu^{\ell} (\f_j) \wedge \om^{n - \ell} -
\int h\,\eta(\varphi_j^k)\, \mu^{\ell} ({\varphi_j^k}) \wedge \om^{n - \ell}   \\
&&  + \int h\,\eta(\varphi_j^k)\, \mu^{\ell}({\varphi_j^k}) \wedge
\om^{n - \ell} -
\int h\,\eta(\varphi^k)\, \mu^{\ell}({\varphi^k}) \wedge \om^{n - \ell}   \\
&& + \int h\,\eta(\varphi^k)\, \mu^{\ell}({\varphi^k})\wedge \om^{n
- \ell} - \int h\,\eta(\varphi)\, \mu^{\ell}({\varphi}) \wedge
\om^{n - \ell}.
\end{eqnarray*}
We claim that the first term in the sum tends to $0$ uniformly in
$j$ as $k \to + \infty.$ Indeed, we have by (\ref{e:nu})
\begin{eqnarray*}
&& \left|\int h\,\eta(\varphi_j)\, \mu^{\ell}({\varphi_j}) \wedge
\om^{n - \ell} - \int h\,\eta(\varphi_j^k)\, \mu^{\ell}
({\varphi_j^k}) \wedge \om^{n - \ell}\right|
\\
&& \leq  \int_{\{\varphi_j \leq - k\}}|h|\,\eta (\varphi_j)\,
\mu^{\ell} ({\varphi_j}) \wedge \om^{n - \ell} + \int_{\{\varphi_j
\leq -
k\}}|h|\,\eta(\varphi_j^k)\, \mu^{\ell} ({\varphi_j^k}) \wedge \om^{n - \ell} \\
&& \leq  2\tilde\eta(-k)\|h\|_\infty\int_X \omega^n,
\end{eqnarray*}
where $\tilde{\eta}(-k):=\sup\{\eta(s):\,s\leq-k\}\to0$ as $k\to+
\infty$. In the same way we see that the last term tends to $0$ as
$k \to + \infty$. Now for fixed  $k$, it follows from the uniformly bounded case
that the second term tends to $0$ as $j \to + \infty$. The
desired continuity result follows.
\end{proof}

\begin{thm}\label{T:tma2} Let $\eta:\R\rightarrow\R^+$ be an
increasing function of class $C^1$ and $0\leq\ell\leq n-1$. Let
$\f \in PSH(X,\om)$ and $\{\f_j\}$ be any sequence of bounded
$\om$-psh functions decreasing to $\f$. Then the  currents
$$
S^{\ell}_\eta(\f_j):=\eta'\circ\f_j\,d \f \wedge d^c \f_j \wedge
(\omega+dd^c\f_j)^{n-\ell-1}
$$
converge weakly towards a positive current $S_{\eta}^{\ell}(\f)$
which is independent of the sequence $\{\f_j\}$. The operator
$S_{\eta}^{\ell}$ is well defined on $PSH(X,\om)$ and is
continuous along any decreasing sequences of $\om$-psh functions.
\end{thm}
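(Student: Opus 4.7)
My plan is to follow the strategy of Theorem \ref{T:tma1}. Since $\eta$ is increasing and nonnegative, $\eta(-\infty)=\lim_{t\to-\infty}\eta(t)\in\R^+$ exists; replacing $\eta$ by $\eta-\eta(-\infty)$ changes neither $\eta'$ nor $S^\ell_\eta$, so I may assume $\eta(-\infty)=0$. Let $H$ be a $C^2$ antiderivative of $\eta$; it is convex and increasing. The chain rule gives, for bounded $\om$-psh $\f_j$, the current identity
$$\eta'(\f_j)\,d\f_j\wedge d^c\f_j \;=\; dd^c H(\f_j)-\eta(\f_j)\,dd^c\f_j.$$
Wedging with $T_j:=(\om+dd^c\f_j)^{n-\ell-1}$ and writing $dd^c\f_j=(\om+dd^c\f_j)-\om$ yields the decomposition
$$S^\ell_\eta(\f_j) \;=\; dd^cH(\f_j)\wedge T_j \;-\;\eta(\f_j)(\om+dd^c\f_j)^{n-\ell} \;+\;\eta(\f_j)\,\om\wedge T_j.$$

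By Theorem \ref{T:tma1} the last two summands converge weakly, independently of the approximating sequence, to $M^{n-\ell}_\eta(\f)$ and $\om\wedge M^{n-\ell-1}_\eta(\f)$ respectively. For the remaining summand I pair with a smooth test $(\ell,\ell)$-form $\Psi$ and apply Stokes' theorem twice (using that $T_j$ is $d$-closed):
$$\int_X dd^cH(\f_j)\wedge T_j\wedge\Psi \;=\; \int_X H(\f_j)\,T_j\wedge dd^c\Psi, \qquad \int_X T_j\wedge dd^c\Psi \;=\; 0.$$
The vanishing identity means I may freely subtract constants from $H$ in the first integral. Truncate by setting $\tilde H_k(t):=H(\max(t,-k))-H(-k)$: this is continuous, nonnegative, bounded, with $\tilde H_k(-\infty)=0$. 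Theorem \ref{T:tma1} applied to this weight then yields, for each fixed $k$,
$$\lim_{j\to\infty}\int_X \tilde H_k(\f_j)\,T_j\wedge dd^c\Psi \;=\; \int_X M^{n-\ell-1}_{\tilde H_k}(\f)\wedge dd^c\Psi.$$

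It remains to control the tail
$$E^k_j \;:=\; \int_{\{\f_j<-k\}}\bigl[H(\f_j)-H(-k)\bigr]\,T_j\wedge dd^c\Psi$$
uniformly in $j$. Using the pointwise estimate $|T_j\wedge dd^c\Psi|\leq C\,T_j\wedge\om^{\ell+1}$ (valid since $\Psi$ is smooth), the formula $H(-k)-H(\f_j)=\int_{\f_j}^{-k}\eta(s)\,ds$ on $\{\f_j<-k\}$, and Fubini, one gets
$$|E^k_j| \;\leq\; C\int_{-\infty}^{-k}\eta(s)\,\bigl[T_j\wedge\om^{\ell+1}\bigr]\bigl(\{\f_j<s\}\bigr)\,ds.$$
This is the main obstacle of the proof: the crude bound $[T_j\wedge\om^{\ell+1}](\{\f_j<s\})\leq\int_X\om^n$ together with $\eta(s)\to 0$ is insufficient when $\eta$ is not integrable at $-\infty$, so one must quantitatively control the sublevel-set masses, by combining the cohomological mass identity $T_j\wedge\om^{\ell+1}(X)=\int_X\om^n$ with Chern--Levine--Nirenberg-type estimates and the monotonicity of $\eta$. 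Once the uniform tail bound is in place, $\lim_j\int S^\ell_\eta(\f_j)\wedge\Psi$ exists and is independent of the approximating sequence, defining the positive current $S^\ell_\eta(\f)$; continuity along arbitrary decreasing sequences in $PSH(X,\om)$ then follows by the diagonal argument of Proposition \ref{P:dec}.
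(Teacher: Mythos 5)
Your decomposition via an antiderivative $H$ of $\eta$ is a sensible start, and the treatment of the terms $\eta(\f_j)\,\om_{\f_j}^{n-\ell}$ and $\eta(\f_j)\,\om\wedge T_j$ via Theorem \ref{T:tma1} is fine. But the proof is not complete: everything hinges on the uniform tail estimate for $E^k_j$, which you explicitly leave open, and the tools you point to cannot deliver it. The best decay of $\bigl[T_j\wedge\om^{\ell+1}\bigr](\{\f_j<s\})$ obtainable from Chern--Levine--Nirenberg-type bounds is $O(1/|s|)$, and that only when $\ell=n-1$ (where $T_j\wedge\om^{\ell+1}=\om^n$ and one uses $\f\in L^1(\om^n)$); for $0\leq\ell\leq n-2$ a uniform bound on $\int_X(-\f_j)\,T_j\wedge\om^{\ell+1}$ is precisely the condition $\f\in{\mathcal E}^1(\om^{\ell+1},\om)$, which is not assumed and fails in general. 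Even where the $O(1/|s|)$ decay is available, $\int_{-\infty}^{-k}\eta(s)|s|^{-1}ds$ can diverge (take $\eta(s)=1/\log(e-s)$), so the estimate you reduce to is not merely unproven but unreachable along the route you sketch. You also do not address the existence of the limit current $S^\ell_\eta(\f)$ independently of this convergence argument.

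The paper avoids the potential-level quantity $H(\f_j)$ altogether and works directly with the quadratic form. Two ingredients: first, the uniform Stokes bound $\int_X\theta'(u)\,du\wedge d^cu\wedge R\leq\int_X\om\wedge R$, valid for any $C^1$ function $0\leq\theta\leq1$ and any closed positive current $R$ of bidimension $(1,1)$; second, the factorization $\eta'=2\sqrt{\eta}\,(\sqrt{\eta})'$, i.e.\ $S^\ell_\eta(\f_j)=2\sqrt{\eta(\f_j)}\,S^\ell_{\sqrt{\eta}}(\f_j)$. On $\{\f_j\leq-k\}$ one has $\eta'(\f_j^k)\leq2\sqrt{\eta(-k)}\,(\sqrt{\eta})'(\f_j^k)$ by monotonicity of $\eta$, so the tail is bounded by $4\sqrt{\eta(-k)}\,\|h\|_\infty\int_X\om^n$ uniformly in $j$ --- no sublevel-set mass decay is needed. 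Existence and positivity of $S^\ell_\eta(\f)$ are obtained beforehand by showing that ${\bf 1}_{\{\f>-k\}}\eta'(\f^k)\,d\f^k\wedge d^c\f^k\wedge\om_{\f^k}^{n-\ell-1}$ increases in $k$ (plurifine locality from \cite{BT87}) with mass controlled by the Stokes bound. I suggest you redo your tail estimate with this factorization; the rest of your architecture then becomes unnecessary.
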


\begin{proof} By subtracting a constant, we may assume that
$\eta(-\infty)=0$. Fix $\ell$ and $\f\in PSH(X,\omega)$. We can
assume that $\f<0$ and $\eta(0)=1$.

\par Observe that  if $\theta :\R \longrightarrow \R$ is any $C^1$ function with
$0 \leq \theta \leq 1$ then for any
$u\in PSH(X,\omega)\cap L^\infty(X)$ and
 any positive closed current $R$ on $X$ of bidimension $(1,1)$, we
have
$$d(\theta (u)d^c u \wedge R)=\theta' (u)du\wedge d^cu\wedge R+
\theta (u) \omega_u\wedge R - \theta (u) \omega \wedge R.$$
 Using Stokes'
formula and the fact that $0\leq \theta  (u)\leq 1$, we get the following uniform bound
\begin{equation}\label{e:bdmass}
\int_X \theta' (u)\,du\wedge d^cu\wedge R\leq\int_X \omega\wedge
R.\end{equation}

\par Assume that $\f \leq 0$ and  set $\f^k := \max(\f,-k)$ for $k \geq 0$. We want
to prove that  the sequence of
positive currents
$${\bf 1}_{\{ \f > -
k\}} \eta'(\f^k)\, d \f^k \wedge d^c \f^k  \wedge \om_{\f^k}^{n - \ell - 1}$$
converges to a positive current of bidimension $(\ell,\ell)$ which will be denoted
by $S_{\eta}^{\ell} (\f)$. It follows from
the quasi-continuity of bounded psh functions \cite{BT87} that
for $j \geq k \geq 0$,

$$
{\bf 1}_{\{ \f > - k\}}  \eta' (\f^k)   d \f^k \wedge d^c \f^k  \wedge \om_{\f^k}^{n - \ell - 1} =
{\bf 1}_{\{ \f > - k\}}  \eta' (\f^j)  d \f^j \wedge d^c \f^j  \wedge \om_{\f^j}^{n - \ell - 1}.
$$

This implies that
$$ {\bf 1}_{\{ \f > -
k\}}  \eta' (\f^k) d \f^k \wedge d^c \f^k  \wedge \om_{\f^k}^{n -
\ell - 1}$$ is an increasing sequence of positive currents of
bidimension $(\ell , \ell)$, with uniformly bounded mass $\leq
\int_X \om^n$ by (\ref{e:bdmass}). Therefore it converges in a
strong sense to a positive current $S_{\eta}^{\ell} (\f)$ on $X$
which satisfies the following equation
\begin{equation}\label{e:jk}
{\bf 1}_{\{ \f > - k\}}  S_{\eta}^{\ell} (\f) ={\bf 1}_{\{ \f > - k\}}
\eta' (\f^k) d \f^k \wedge d^c \f^k  \wedge \om_{\f^k}^{n - \ell - 1} .
\end{equation}
Observe that if $\f$ is bounded then
$S_{\eta}^{\ell} (\f) = \eta' (\f) d \f \wedge d^c \f  \wedge \om_{\f}^{n - \ell - 1}$.

Now we want to prove that for any decreasing sequence $\{\f_j\}$
which converges to $\f$, the currents $S_{\eta}^{\ell} (\f_j)$
converge to the current $S_{\eta}^{\ell} (\f)$ in the sense of
currents on $X$. As before it is enough to prove that the sequence
of positive measures $ S_{\eta}^{\ell} (\f_j) \wedge \om^{\ell}$
converges weakly to the positive measure $S_{\eta}^{\ell} (\f)
\wedge \om^{\ell}$ on $X$. Let $h$ be a continuous function on
$X$. Proceeding as in the proof of the previous theorem, it
suffices to show that
$$
\left|\int h\, S_{\eta}^{\ell} (\f_j) \wedge \om^{\ell} - \int h\, \eta' (\f_j^k) d \f_j^k
\wedge d^c \f_j^k  \wedge \om_{\f_j^k}^{n - \ell - 1}\wedge\om^{\ell}\right| \to 0
$$
as $k\to\infty$, uniformly in $j$, where $\f_j^k=\max(\f_j,-k)$.  Indeed, if $\theta =\sqrt{\eta}$
then  $\eta'=2 \sqrt{\eta}\,\theta'$, so it
follows at once from the definition of $S^\ell_\eta$ that
$S^\ell_\eta(\f_j)= 2 \sqrt{\eta (\f_j)}S^\ell_\theta(\f_j)$. Since
$\eta' (\f_j^k) \leq 2 \sqrt{\eta (- k)} \theta' (\f_j^k)$ on the set $\{\f_j\leq-k\}$,
we have by (\ref{e:jk}) that
\begin{eqnarray*}
&& \left|\int h\, S_{\eta}^{\ell} (\f_j) \wedge \om^{\ell} -
 \int h\, \eta' (\f_j^k) d \f_j^k \wedge d^c \f_j^k  \wedge \om_{\f_j^k}^{n - \ell - 1}\wedge \om^{\ell}
\right|
\\
&& \leq 2 \sqrt{\eta (- k)} \int_{\{\varphi_j \leq - k\}} |h|\, S_{\theta}^{\ell} (\f_j) \wedge
\om^{\ell} \\
&&  + 2 \sqrt{\eta (- k)}  \int_{\{\varphi_j \leq - k\}} |h|\,\theta' (\f_j^k) d \f_j^k
\wedge d^c \f_j^k  \wedge \om_{\f_j^k}^{n - \ell - 1}\wedge \om^{\ell} \\
&& \leq  4 \sqrt{\eta (- k)} \|h\|_{\infty} \int_X \omega^n,
\end{eqnarray*}
which tends to $0$ as $k \to + \infty$ uniformly in $j$.
\end{proof}

\subsection{Attenuation of singularities}\label{SS:attenuation}

\par The goal of this section is to show that a very small
attenuation of singularities transforms a function $\f \in
PSH(X,\om)$ into a function $\chi \circ \f \in {\mathcal
E}(X,\om)$. In particular most functions in the class ${\mathcal
E}(X,\om)$ do not belong to the local domain of definition
$DMA_{loc}(X,\om)$, which we consider in Section \ref{S:local}.

\par Let $\chi:\R^- \rightarrow \R^-$ be a convex increasing
function of class $C^2$ and such that $\chi(-\infty)=-\infty$,
$\chi'(-\infty)=0$ and $\chi'(-1)\leq 1$. Define for $x\leq-1$
$$\eta_\ell(x)=(\chi'(x))^\ell(1-\chi'(x))^{n-\ell}\;,\;
\e_\ell(x)=\int_0^{\chi'(x)}t^{n-\ell-1}(1-t)^\ell dt.$$

\begin{cor}\label{C:atten}
If $\f \in PSH(X,\om)$, $\f\leq-1$, then $\chi \circ \f \in
{\mathcal E}(X,\om)$ and
\begin{equation}\label{e:atten}\om_{\chi \circ \f}^n=
\sum_{\ell=0}^n{n\choose\ell}M^\ell_{\eta_\ell}(\f)\wedge
\om^{n-\ell}+n\sum_{\ell=0}^{n-1}{n-1\choose\ell}
S^\ell_{\e_\ell}(\f)\wedge \om^\ell,\end{equation} where
$MA^\ell_{\eta_\ell}$ and $S^\ell_{\e_\ell}$ are the operators
defined in Theorems \ref{T:tma1} and \ref{T:tma2}.\end{cor}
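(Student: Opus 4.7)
The plan is: (1) prove the identity (\ref{e:atten}) first for bounded $\om$-psh $\f$ by an algebraic expansion, (2) extend to general $\f$ by continuity using the twisted operators $M^\ell_\eta$ and $S^\ell_\eta$, and (3) deduce $\chi\circ\f \in \mathcal{E}(X,\om)$ from preservation of total mass along these approximants.

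For bounded $\f$, one decomposes
$$\om + dd^c(\chi\circ\f) = \underbrace{(1-\chi'(\f))\om}_{A} + \underbrace{\chi'(\f)\om_{\f}}_{B} + \underbrace{\chi''(\f)\, d\f \wedge d^c\f}_{C},$$
where the three summands are positive $(1,1)$-forms because $0 \leq \chi'(\f) \leq \chi'(-1) \leq 1$ and $\chi$ is convex. The key observation is $C\wedge C = 0$ (since $d\f\wedge d\f = 0$), so $(A+B+C)^n = (A+B)^n + n\,C\wedge(A+B)^{n-1}$. Expanding $(A+B)^n$ binomially gives exactly $\sum_\ell \binom{n}{\ell} M^\ell_{\eta_\ell}(\f)\wedge\om^{n-\ell}$, and expanding $n(A+B)^{n-1}\wedge C$ (followed by the reindexing $\ell \leftrightarrow n-1-\ell$ and the identity $\e_\ell'(x) = (\chi'(x))^{n-1-\ell}(1-\chi'(x))^\ell\chi''(x)$) yields $n\sum_\ell \binom{n-1}{\ell} S^\ell_{\e_\ell}(\f)\wedge\om^\ell$. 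This establishes the formula for bounded $\f$.

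For general $\f \leq -1$, set $\f_j := \max(\f,-j)$; the bounded case gives $\om_{\chi\circ\f_j}^n = \mathrm{RHS}(\f_j)$. Each term on the right converges weakly: by Theorem \ref{T:tma1} for $\ell\geq 1$, where $\eta_\ell(-\infty)=0$; by monotone convergence against the smooth measure $\om^n$ for $\ell=0$, where $\eta_0(\f_j)$ increases to $\eta_0(\f)$; and by Theorem \ref{T:tma2} for the $S^\ell_{\e_\ell}$ terms. Hence $\mathrm{RHS}(\f_j)$ converges weakly to a positive measure $\mathrm{RHS}(\f)$, whose total mass equals $\int_X \om^n$ (testing against the constant $1$ on the compact $X$) and which does not charge the pluripolar set $\{\f=-\infty\}$: the $\ell=0$ term is absolutely continuous with respect to $\om^n$, while the remaining pieces are strong limits of truncations $\mathbf{1}_{\{\f>-k\}}(\cdots)$ by the constructions in Theorems \ref{T:tma1} and \ref{T:tma2}.

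Finally, since $\chi$ is increasing, $\chi\circ\f_j = \max(\chi\circ\f, -k_j)$ with $k_j := -\chi(-j) \to +\infty$; that is, $\chi\circ\f_j$ is exactly the canonical approximant of $u := \chi\circ\f$ at level $k_j$. The Portmanteau inequality applied to the closed sets $\{\f \leq -j_0\}$ gives
$$\limsup_j \om_{u_{k_j}}^n\bigl(\{u\leq -k_j\}\bigr) \leq \mathrm{RHS}(\f)\bigl(\{\f\leq -j_0\}\bigr) \xrightarrow{j_0 \to \infty} \mathrm{RHS}(\f)\bigl(\{\f = -\infty\}\bigr) = 0,$$
so $\mu(u)(X) = \int_X\om^n$ and $u \in \mathcal{E}(X,\om)$. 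Proposition \ref{P:dec} (via $\mathcal{E}\subset DMA$) then identifies $\om_{\chi\circ\f}^n = \mathrm{RHS}(\f)$, completing (\ref{e:atten}). The main obstacle I foresee is this last step: transferring the weak total-mass conservation of the specific sequence $\mathrm{RHS}(\f_j)$ into the canonical-approximants characterization of $\mathcal{E}$. The combination of the pluripolar-non-charging of $\mathrm{RHS}(\f)$ with the Portmanteau inequality on the shrinking closed sets $\{\f \leq -j_0\}$ is the key; once $u \in \mathcal{E}$ is secured, everything else reduces to the algebraic identity $C\wedge C = 0$ and the continuity theorems of Section \ref{SS:twisted}.
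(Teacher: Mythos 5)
Your algebraic step (the expansion of $(\om+dd^c\chi\circ\f)^n$ for bounded $\f$ using $d\f\wedge d^c\f\wedge d\f\wedge d^c\f=0$) and the identification of the weak limit of $\mathrm{RHS}(\f_j)$ via Theorems \ref{T:tma1} and \ref{T:tma2} coincide with the paper's argument. The gap is in the step you yourself flag as the main obstacle: the Portmanteau argument. For a merely upper semicontinuous $\f$ the sublevel set $\{\f\leq -j_0\}$ is \emph{not} closed (it is only a $G_\delta$: $\{\f<c\}$ is open and $\{\f\geq c\}$ is closed, but $\{\f\leq c\}$ has no good topological property), so the inequality $\limsup_j\mu_j(\{\f\leq-j_0\})\leq\mathrm{RHS}(\f)(\{\f\leq-j_0\})$ is not licensed. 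Replacing the set by its closure does not help: $\{\f=-\infty\}$ can be a dense pluripolar subset of $X$ (e.g. $\f=\sum_k\e_k\log \mathrm{dist}(\cdot,a_k)$ with $\{a_k\}$ dense), in which case $\overline{\{\f\leq-j_0\}}=X$ and the bound degenerates to $\int_X\om^n$. More fundamentally, weak convergence $\mu_j\to\mu$ together with $\mu(\{\f=-\infty\})=0$ does not imply $\mu_j(\{\f\leq-j\})\to0$ for a shrinking family of Borel sets: one can spread an $\e$-portion of each $\mu_j$ over points of the dense set $\{\f<-j\}$ so that it merges into the absolutely continuous part of the limit.

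The paper closes this step quantitatively rather than through weak convergence: it bounds the mass of each term of $\mathrm{RHS}(\f^{s})$ on $\{\f\leq s\}$ directly, using the estimates \emph{inside} the proofs of Theorems \ref{T:tma1} and \ref{T:tma2}. For $1\leq\ell\leq n$ the density satisfies $\eta_\ell(\f^{s})=\eta_\ell(s)\leq(\chi'(s))^\ell\to0$ on $\{\f\leq s\}$ while $\om_{\f^s}^\ell\wedge\om^{n-\ell}$ has fixed total mass; for $\ell=0$ one uses $\om^n(\{\f\leq s\})\to0$; and for the gradient terms one uses the $\sqrt{\e_\ell}$-factorization estimate from the proof of Theorem \ref{T:tma2}, available precisely because $\e_\ell(-\infty)=0$, which is where the hypothesis $\chi'(-\infty)=0$ enters. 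These uniform bounds give $(\om+dd^c\max(\chi\circ\f,-j))^n(\{\chi\circ\f\leq-j\})\to0$ directly, hence $\chi\circ\f\in{\mathcal E}(X,\om)$; after that, your identification of $\om_{\chi\circ\f}^n$ with $\mathrm{RHS}(\f)$ via the continuity of the twisted operators goes through. You should replace the Portmanteau step by these term-by-term estimates.
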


\begin{proof}
Note first that if $\f$ is bounded then
\begin{eqnarray*}
(\om+dd^c\chi\circ\f)^n & = &
\left[(1-\chi'(\f))\om+\chi'(\f)\,\om_{\f}\right]^n+ \\ &&
n\chi''(\f)\,d\f\wedge d^c\f\wedge\left[(1-\chi'(\f))\om+
\chi'(\f)\,\om_{\f}\right]^{n-1},\end{eqnarray*} which equals the
measure from (\ref{e:atten}). In the general case, let
$\f^s=\max(\f,s)$, $s<0$. As $s\rightarrow-\infty$ we have, by the
proofs of Theorems \ref{T:tma1} and \ref{T:tma2}, that
\begin{eqnarray*}&&
\left(\e_\ell'(\f^s)\,d\f^s\wedge d^c\f^s\wedge
\om_{\f^s}^{n-\ell-1}\wedge\om^\ell\right)(\{\f\leq
s\})\rightarrow0,\\&& \left(\eta_\ell(\f^s)\,\om_{\f^s}^\ell\wedge
\om^{n-\ell}\right) (\{\f\leq s\})\rightarrow0.\end{eqnarray*}
Since $\max(\chi(\f),-j)=\chi(\f^{s_j})$, where
$s_j=\chi^{-1}(-j)\rightarrow-\infty$ as $j\rightarrow+\infty$, we
conclude by formula (\ref{e:atten}) applied to $\f^{s_j}$ that
$$\left(\om+dd^c\max(\chi(\f),-j)\right)^n
(\{\chi(\f)\leq-j\})\rightarrow0,$$ so $\chi \circ \f \in
{\mathcal E}(X,\om)$. Moreover, using the continuity of the
operators in Theorems \ref{T:tma1} and \ref{T:tma2}, it follows
that (\ref{e:atten}) holds for $\f$.\end{proof}

\begin{exas}
\text{ }

1) For $\chi(t)=-(-t)^p$, Corollary \ref{C:atten} shows that
$-(-\f)^p \in {\mathcal E}(X,\om)\subset DMA(X,\om)$ for all
$p<1$, although it usually does not have gradient in $L^2(X)$ when
$p\geq1/2$. Functions with less attenuated singularities can be
obtained by considering for instance $\chi(t)=t/\log(M-t)$.

2) Recall from Example \ref{E:Evans} that there are functions $\f
\notin DMA(X,\om)$ such that the current $\om_{\f}$ has support in
$\{\f=-\infty\}$. The smoothing effect of the composition with
$\chi$ is quite striking: indeed, by Corollary \ref{C:atten} and
Proposition \ref{P:wgrad},
$$
(\om+dd^c \chi \circ \f)^n=[1-\chi'(\f)]^n \om^n+
n[1-\chi'(\f)]^{n-1}\chi''(\f)\, d\f \wedge d^c \f\wedge\om^{n-1}
$$
is a measure with density in $L^1(X,\om^n)$.
\end{exas}

\section{The local vs. global domains of definition}\label{S:local}

Cegrell found in \cite{Ce04} the largest class of psh functions on
a bounded hyperconvex domain on which the Monge-Amp\`ere operator
is well defined, stable under maximum and continuous under
decreasing limits. Later, Blocki gave in \cite{Bl06} a complete
characterization of the domain of definition of the Monge-Amp\`ere
operator on {\em any} open set in $\C^n$, $n \geq 2$. For an open
subset $ U \subset \C^n$, the domain of definition $\mathcal D (U)
\subset PSH (U)$ of the Monge-Amp\`ere operator on $U$ is given by
$(n - 1)$ local boundedness conditions on weighted gradients
\cite{Bl06}. In particular, for $n = 2$, $\mathcal D (U) = PSH (U)
\cap W_{loc}^{1,2} (U)$, where $W_{loc}^{1,2}(U)$ is the Sobolev
space of functions in $L^2_{loc}(U)$ with locally square
integrable gradient \cite{Bl04}.

\par We  describe here the class of $\om$-psh
functions on $X$ which locally belong to the domain of definition
of the Monge-Amp\`ere operator. As we shall see, it is smaller
than the global domain $DMA(X,\om)$.

\begin{defi}\label{D:DMAloc} Let $DMA_{loc} (X,\om)$ be the
set of functions $\f\in PSH (X,\om)$ such that locally, on any
small open coordinate chart $U \subset X$, the psh function $\f|_U
+ \rho_U \in \mathcal D (U)$, where $\rho_U$ is a psh potential of
$\om$ on $U$.
\end{defi}

The goal of this section is to establish the following:

\begin{thm}\label{T:DMAloc} We have
$$
DMA_{loc}(X,\om)=\bigcap_{1 \leq p \leq n-1} {\mathcal E}^p(\om^p,\om).
$$
Moreover
$$
DMA_{loc}(X,\om) \subset  {\mathcal E}^1 (\om,\om) \subset
\widehat{DMA}_{\leq n}(X,\om).
$$
\end{thm}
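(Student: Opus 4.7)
The strategy is to combine Blocki's local characterization of $\mathcal{D}(U)$ from \cite{Bl06} with a partition of unity on $X$ to identify $DMA_{loc}(X,\om)$ with the stated intersection of global weighted energy classes, and then to deduce the two remaining inclusions via an integration-by-parts monotonicity together with Corollary \ref{C:econvergence}. First I would recall that, for an open $U\subset\C^n$ with $n\geq 2$, Blocki's $(n-1)$ local boundedness conditions can be reformulated (after the standard integration by parts in the local Cegrell class) as follows: $u\in PSH(U)$ lies in $\mathcal{D}(U)$ iff for every compact $K\subset U$ and every $p=1,\dots,n-1$,
\[
\sup_k\int_K(-u_k)^p\,(dd^c u_k)^{n-p}\wedge\beta^p<+\infty,
\]
where $u_k:=\max(u,-k)$ and $\beta$ is the Euclidean K\"ahler form. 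The $p=0$ condition (bounded total Monge-Amp\`ere mass) is automatic on the compact manifold $X$ since $\int_X\om_{\f_k}^n=\int_X\om^n$.

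Next I cover $X$ by finitely many coordinate charts $U_\alpha$ on which $\om=dd^c\rho_\alpha$ for bounded smooth local psh potentials $\rho_\alpha$, and I choose compact $K_\alpha\subset U_\alpha$ with $\bigcup_\alpha K_\alpha=X$. For $\f\in PSH(X,\om)$ set $\tilde\f_\alpha:=\f+\rho_\alpha\in PSH(U_\alpha)$, so by definition $\f\in DMA_{loc}(X,\om)$ iff $\tilde\f_\alpha\in\mathcal{D}(U_\alpha)$ for every $\alpha$. On $K_\alpha$ one has $dd^c\tilde\f_\alpha=\om_\f$, $\rho_\alpha$ is bounded, and $\beta\asymp\om$. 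Expanding $(-\f-\rho_\alpha)^p$ by the binomial formula and absorbing the lower-order cross terms through further integrations by parts in the local Cegrell-Blocki theory, the local criterion for $\tilde\f_\alpha$ on $K_\alpha$ becomes equivalent to the uniform finiteness of $\int_{K_\alpha}(-\f_k)^p\,\om_{\f_k}^{n-p}\wedge\om^p$, with $\f_k=\max(\f,-k)$. Summing over the finite cover and using positivity, for each $p$ the collection of local conditions (as $\alpha$ varies) is equivalent to the single global bound
\[
\sup_k\int_X(-\f_k)^p\,\om_{\f_k}^{n-p}\wedge\om^p<+\infty,
\]
that is, $\f\in\mathcal{E}^p(\om^p,\om)$. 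This establishes the identity $DMA_{loc}(X,\om)=\bigcap_{p=1}^{n-1}\mathcal{E}^p(\om^p,\om)$.

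For the chain of inclusions, $DMA_{loc}(X,\om)\subset\mathcal{E}^1(\om,\om)$ is immediate once the equality above is proved, since $\mathcal{E}^1(\om,\om)$ is the $p=1$ factor of the intersection. To see $\mathcal{E}^1(\om,\om)\subset\widehat{DMA}_{\leq n}(X,\om)$, I decompose $\om_\f^{n-k}\wedge\om^k=\om_\f^{n-k-1}\wedge\om^{k+1}+dd^c\f\wedge\om_\f^{n-k-1}\wedge\om^k$ and integrate by parts on bounded approximants $\f_j$ to obtain
\[
\int_X(-\f_j)\om_{\f_j}^{n-k}\wedge\om^k-\int_X(-\f_j)\om_{\f_j}^{n-k-1}\wedge\om^{k+1}=\int_X d\f_j\wedge d^c\f_j\wedge\om_{\f_j}^{n-k-1}\wedge\om^k\geq 0,
\]
so that $k\mapsto\int_X(-\f_j)\om_{\f_j}^{n-k}\wedge\om^k$ is non-increasing. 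Hence $\mathcal{E}^1(\om,\om)\subset\mathcal{E}^1(\om^k,\om)$ for every $k=1,\dots,n-1$, and Corollary \ref{C:econvergence} (applied with $p=n-k$) gives $\mathcal{E}^1(\om^k,\om)\subset\widehat{DMA}_{n-k+1}(X,\om)$. Varying $k$ covers all indices $\ell\in\{2,\dots,n\}$; the case $\ell=1$ is trivial because the operator $\f\mapsto\om_\f$ is linear and the required convergence on bounded test functions $u$ follows from integration by parts together with monotone convergence. Thus $\mathcal{E}^1(\om,\om)\subset\bigcap_{\ell=1}^{n}\widehat{DMA}_\ell(X,\om)=\widehat{DMA}_{\leq n}(X,\om)$. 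The main technical obstacle is the equivalence step in the second paragraph: one must verify that Blocki's local integrals, formulated with the Euclidean $\beta$ and the shifted potentials $-\f-\rho_\alpha$, really do combine into the clean global K\"ahlerian energies with $\om$ and $-\f$, which requires careful bookkeeping of the cross terms produced by expanding the binomials $(-\f-\rho_\alpha)^p$ via the standard integration-by-parts estimates in the local Cegrell-Blocki classes.
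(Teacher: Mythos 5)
Your proposal is correct and follows essentially the same route as the paper: Blocki's characterization of $\mathcal D(U)$, the iterated integration-by-parts identity converting weighted-gradient energies into the classes ${\mathcal E}^p(\om^p,\om)$, and Corollary \ref{C:econvergence} for the inclusion into $\widehat{DMA}_{\leq n}(X,\om)$. The one real divergence is precisely the step you flag as your ``main technical obstacle'': the paper never expands $(-\f-\rho_\alpha)^p$ locally. It imports \cite{Bl06} directly in the form of the \emph{global} gradient conditions
\[
\sup_j \int_X (-\f_j)^{p-1}\, d\f_j\wedge d^c\f_j\wedge \om_{\f_j}^{\,n-p-1}\wedge\om^{p}<+\infty,\qquad 1\leq p\leq n-1,
\]
(the passage from Blocki's local integrals to these is milder than what you set up, since $d\rho_\alpha\wedge d^c\rho_\alpha\leq C\om$ and $(-\f_j-\rho_\alpha)\asymp(-\f_j)$ once $\f\leq -C$, so only a Cauchy--Schwarz on the gradient term and an induction on $p$ are needed, no binomial expansion of the weight), and only afterwards converts these into $\sup_j\int_X(-\f_j)^{p}\,\om_{\f_j}^{n-p}\wedge\om^{p}<+\infty$ by integrating by parts \emph{globally} on the closed manifold, where Stokes produces no boundary terms. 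Doing that conversion on compacts $K_\alpha\subset U_\alpha$, as you propose, forces you to juggle cutoffs and cross terms simultaneously, which is exactly the bookkeeping you leave undischarged; reordering the two steps as the paper does makes it go away. Your treatment of the two inclusions is fine, and your remark that $DMA_{loc}(X,\om)\subset{\mathcal E}^1(\om,\om)$ is immediate because ${\mathcal E}^1(\om,\om)={\mathcal E}^1(\om^1,\om)$ is literally the $p=1$ factor of the intersection is a small shortcut over the paper, which routes this through the identity ${\mathcal E}^1(\om,\om)=\bigcap_{1\leq p\leq n-1}{\mathcal E}^1(\om^p,\om)$ --- an identity you also establish (via the monotonicity in $k$) and still need for the second inclusion.
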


Here ${\mathcal E}^p(\om^p,\om)$ denotes the class ${\mathcal
E}_{\chi}(T,\om)$ for $T=\om^p$ and $\chi(t)=-(-t)^p$.

\begin{proof}
We first show that $DMA_{loc}(X,\om)= \bigcap_{1 \leq p \leq n-1}
{\mathcal E}^p(\om^p,\om)$. By \cite{Bl06}, a $\om$-psh function
$\f\in DMA_{loc}(X,\om)$ if and only if
\begin{equation}\label{e:5}
\sup_j \int_X (-\f_j)^{p-1}d\f_j
\wedge d^c \f_j  \wedge \om_{\f_j}^{n - p - 1} \wedge \om^{p} < +
\infty, \; 1\le p\le n - 1,
\end{equation}
where $\{\f_j\}$ is any sequence of bounded $\om$-psh functions
decreasing to $\f$ on $X$. For our first claim we need to prove
that (\ref{e:5}) is equivalent to the following
\begin{equation}\label{e:3star}
\sup_j \int_X (-\f_j)^{p} \om_{\f_j}^{n - p} \wedge \om^{p} < +
\infty, \; 1\le p\le n - 1,
\end{equation}
 This follows by integration by parts. Indeed for $\psi \in PSH (X,\om)
\cap L^{\infty} (X)$, $\psi \leq - 1$ and $1 \leq p \leq n - 1$, we have
\begin{eqnarray*}
\int_{X} (- \psi)^{p}  \om_{\psi}^{n - p} \wedge \om^{p} & = &
\int_X (- \psi)^{p}  \om_{\psi}^{n - p - 1} \wedge \om^{ p + 1} \\
& + &  p \int_{X} (- \psi)^{p - 1}  d \psi \wedge d^c \psi  \wedge
\om_{\psi}^{n - p - 1} \wedge \om^{p}.
\end{eqnarray*}
By iterating the above formula we get
\begin{eqnarray*}
 \int_{X} (- \psi)^{ p}  \om_{\psi}^{n - p} \wedge \om^{ p}&=&
 \int_X (- \psi)^{p} \om^n \\
&+& p \sum_{k=0}^{n - p - 1} \int_{X} (- \psi)^{ p - 1} d \psi
\wedge d^c \psi \wedge \om_{\psi}^{n - p - k - 1} \wedge \om^{ p +
k}.
\end{eqnarray*}
This yields $(\ref{e:5}) \Longleftrightarrow (\ref{e:3star})$, so
$DMA_{loc}(X,\om)=\bigcap_{1 \leq p \leq n-1} {\mathcal
E}^p(\om^p,\om)$.

By Corollary \ref{C:econvergence} we have ${\mathcal
E}^1(\om^p,\om) \subset \widehat{DMA}_{n - p + 1} (X,\om)$, for
any $1 \leq p \leq n - 1$. Therefore  the class
$$
{\mathcal E}^1(\om,\om):=
\left\{ \f \in PSH(X,\om) \, / \, \sup_j \int_X |\f_j| (\om+dd^c \f_j)^{n-1} \wedge \om
<+\infty \right\}
$$
is contained in $\widehat{DMA}_{\leq n}(X,\om)$ (here
$\f_j:=\max(\f,-j)$ denote as usually the canonical approximants),
since

$$
{\mathcal E}^1(\om,\om)=
\bigcap_{1 \leq p \leq n-1} {\mathcal E}^1(\om^p,\om).
$$
This equality also shows that
$DMA_{loc} (X,\om) \subset {\mathcal E}^1(\om,\om)$.

\end{proof}

\begin{rqe}
A class similar to ${\mathcal E}^1(\om,\om)$ has been very
recently considered by Y. Xing in \cite{X}.
\end{rqe}

Note that in dimension $n=2$, the class ${\mathcal
E}^1(\om,\om)=DMA_{loc}(X,\om)$ is simply the set of $\om$-psh
function whose gradient is in $L^2(X)$. However when $n \geq 3$,
the class ${\mathcal E}^1(\om,\om)$ is strictly larger than
$DMA_{loc}(X,\om)$, as the following example shows:

\begin{exa}
Observe that ${\mathcal E}^1(X,\om) \subset {\mathcal E}^1(\om,\om)$.
We are going to exhibit an example of a function
$\f$ such that $\f \in {\mathcal E}^1(X,\om)$, but
$\f \notin L^2(\om_{\f} \wedge \om^{n-1})$.
This will show that $\f \notin DMA_{loc}(X,\om)$ when $n \geq 3$.

Assume $X = \mathbb P^{n - 1} \times \mathbb P^1$ and $\om (x,y)
:= \a (x) + \b (y),$ where $\a$ is the Fubini-Study form on
$\mathbb  P^{n - 1}$ and $\b$ is the Fubini-Study form on $\mathbb
P^1$. Fix $u \in PSH (\mathbb  P^{n - 1},\a) \cap \mathcal
C^{\infty} (\mathbb  P^{n - 1})$ and  $v \in  \mathcal E (\mathbb
P^1,\b)$. The function $\f$ defined by  $\f (x,y) := u (x) + v
(y)$ for $(x,y) \in X$ belongs to $\mathcal E (X,\om)$. Moreover
$\om_{\f} = \a_u + \b_v$ and for any $1 \leq \ell \leq n$, we have
$$\om_{\f}^{\ell} = \a_u^\ell +\ell \a_u^{\ell - 1} \wedge \b_v.$$
Therefore
$$
\f \in L^p (\om_{\f}^n) \Longleftrightarrow v \in L^p (\b_{v})
\Longleftrightarrow \f \in L^p (\om_{\f}^{\ell} \wedge \om^{n - \ell}).
$$
Thus choosing  $v \in L^1 (\om_v) \setminus L^{2} (\om_{v})$,
we obtain an example of a $\om$-psh function $\f$
such that
$\f \in \mathcal E^1 (X,\om) \subset \mathcal E^1 (\om,\om)$ but $\f \notin DMA_{loc} (X,\om).$
\end{exa}

We finally observe that there are functions in $DMA_{loc}(X,\om)$
which do not belong to the class ${\mathcal E}(X,\om)$, since the
latter cannot have positive Lelong numbers.

\begin{lem}\label{L:ELel}
Let $\f \in PSH(X,\om)$ be such that $\f \geq c \log
dist(\cdot,p)$, for some $c>0$ and $p \in X$. Then $\f \in
DMA_{loc}(X,\om)$, and
$$
\f \in {\mathcal E}(X,\om) \text{ if and only if }
\nu(\f,p)=0.
$$
\end{lem}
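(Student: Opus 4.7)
The plan is to prove the two claims of the lemma in order.

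For $\f \in DMA_{loc}(X,\om)$, I work chart by chart. On any coordinate chart $U \subset X$ with a psh potential $\rho_U$ of $\om$, the function $u_U := \f|_U + \rho_U$ is psh. If $p \notin U$, then the hypothesis forces $\f$ to be bounded on $U$, so $u_U \in \mathcal{D}(U)$ trivially. If $p \in U$, then $u_U$ is psh on $U$, locally bounded on $U \setminus \{p\}$, and has at most logarithmic growth at $p$; Demailly's extension of the Bedford--Taylor theory \cite{De93} places any such function in $\mathcal{D}(U)$. Hence $\f \in DMA_{loc}(X,\om)$.

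For the equivalence, I would first reduce $\mathcal{E}$-membership to the vanishing of a single atom. Theorem \ref{T:DMAloc} places $\f$ in $\widehat{DMA}_{\leq n}(X,\om)$, so Corollary \ref{C:C18} gives $\mu(\f) = \mathbf{1}_{\{\f > -\infty\}} \om_\f^n$. The lower bound $\f \geq c \log dist(\cdot,p)$ confines $\{\f = -\infty\}$ to $\{p\}$, and combined with $\int_X \om_\f^n = \int_X \om^n$ we obtain
\[
\mu(\f)(X) = \int_X \om^n - \om_\f^n(\{p\}).
\]
Consequently $\f \in \mathcal{E}(X,\om)$ if and only if $\om_\f^n(\{p\}) = 0$. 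The forward direction then follows at once from Proposition \ref{P:cLn}, which yields $\om_\f^n(\{p\}) \geq \nu(\f,p)^n$.

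The main obstacle is the converse implication $\nu(\f,p) = 0 \Rightarrow \om_\f^n(\{p\}) = 0$, a purely local statement about residual Monge--Amp\`ere mass at an isolated log-type singularity. In a coordinate ball about $p$, setting $u := \f + \rho$, one has $u$ psh with $u \geq c\log|z| + O(1)$, isolated singularity at $0$, and $\nu(u,0) = 0$, and one must establish $(dd^c u)^n(\{0\}) = 0$. My approach would be to exploit the log-type lower bound, which controls the singularity type and permits comparison of $u$ with the model $c\log|z|$: via Demailly's comparison for Monge--Amp\`ere masses at isolated log-type singularities, the residual mass $(dd^c u)^n(\{0\})$ is dominated by the Lelong number $\nu(u,0)$ and therefore vanishes. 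Alternatively, one can approximate $\f$ from above by $\f_\e := (1-\e)\f \in PSH(X,\om)$, which has $\nu(\f_\e,p) = 0$ and inherits a log-type lower bound, and then apply the weak continuity of the Monge--Amp\`ere operator (Proposition \ref{P:dec}) together with a uniform estimate on the atomic parts to pass the vanishing of the atom to the limit.
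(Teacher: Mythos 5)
Your reduction of the equivalence to the single identity $\mu(\f)(X)=\int_X\om^n-\om_\f^n(\{p\})$ is correct (and matches what the paper's machinery gives), the forward direction via Proposition \ref{P:cLn} is fine, and your chart-by-chart argument for $\f\in DMA_{loc}(X,\om)$ is an acceptable substitute for the paper's appeal to Proposition \ref{P:div}. But the step you yourself flag as ``the main obstacle'' --- $\nu(\f,p)=0\Rightarrow\om_\f^n(\{p\})=0$ --- is where the proposal has a genuine gap, and neither of your two sketches closes it. Demailly's comparison theorem for residual Monge--Amp\`ere masses is stated in terms of $\ell=\limsup_{z\to 0}u(z)/\log|z|$; under the hypothesis $u\geq c\log|z|+O(1)$ this gives only $(dd^cu)^n(\{0\})\leq c^n$, which does not vanish. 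The Lelong number is the \emph{liminf} of $u/\log|z|$, not the limsup, and the inequality ``residual mass $\lesssim$ Lelong number'' is precisely the nontrivial content here, not a citable black box (absent the log lower bound it is the zero-mass problem, which is far from elementary). Your alternative via $\f_\e=(1-\e)\f$ is circular: $(1-\e)\f$ has zero Lelong number and a log-type lower bound just like $\f$, so proving $\om_{\f_\e}^n(\{p\})=0$ is the same problem again.

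What the paper actually does at this point is an energy bootstrap. Take $\r_p\in PSH(X,\om)$ comparable to $c\log dist(\cdot,p)$, normalized so that $\r_p\leq\f\leq 0$. What Demailly's theory \emph{does} give directly is the mixed mass identity: $\om_\f\wedge\om_{\r_p}^{n-1}$ charges $\{p\}$ if and only if $\nu(\f,p)>0$. So when $\nu(\f,p)=0$ this measure puts no mass on $\{\r_p=-\infty\}=\{p\}$, and one can choose $\chi\in{\mathcal W}^-$ with $\int_X(-\chi)\circ\r_p\,\om_\f\wedge\om_{\r_p}^{n-1}<+\infty$. The fundamental inequality of \cite{GZ2} (Stokes, using $\r_p\leq\f$ to replace $n-1$ copies of $\om_\f$ by $\om_{\r_p}$ and the weight argument $\f$ by $\r_p$, at the cost of a factor $2^{n-1}$) then yields
$$
\int_X(-\chi)\circ\f\,\om_\f^n\leq 2^{n-1}\int_X(-\chi)\circ\r_p\,\om_\f\wedge\om_{\r_p}^{n-1}<+\infty,
$$
so $\f\in{\mathcal E}_\chi(X,\om)\subset{\mathcal E}(X,\om)$ directly --- no detour through $\widehat{DMA}$ and Corollary \ref{C:C18} is needed. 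This passage from the vanishing of the \emph{mixed} atom to control of the \emph{full} Monge--Amp\`ere mass is the idea missing from your proposal; the log-type lower bound is used exactly there, through the ordering $\r_p\leq\f$.
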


\begin{proof}
If $\r_p \in PSH(X,\om)$ is a function comparable to $c \log
dist(\cdot,p)$, then by Proposition \ref{P:div}, $\f, \r_p \in
DMA_{loc}(X,\om)$. We can assume without loss of generality that $\r_p
\leq \f \leq 0$. Note that the positive Radon measure $\om_{\f}
\wedge \om_{\r_p}^{n-1}$ is well defined on $X$ and has a Dirac
mass at $p$ if and only if $\nu(\f,p)>0$ (see \cite{De93}).

\par It follows from the proof of Proposition \ref{P:cLn} that if
$\nu(\f,p)>0$, then $\om_{\f}^n$ has a Dirac mass at $p$, hence
$\f \notin{\mathcal E}(X,\om)$. If $\nu(\f,p)=0$, then we can find
a convex increasing function $\chi:\R^- \rightarrow \R^-$ such
that $\chi(-\infty)=-\infty$ and $\int_X (-\chi) \circ \r_p \,
\om_{\f} \wedge \om_{\r_p}^{n-1} <+\infty$. Using Stokes theorem
(in the spirit of the fundamental inequality in \cite{GZ2}), it
follows that
$$
\int_X (-\chi) \circ \f \, \om_{\f}^n \leq 2^{n-1}
\int_X (-\chi) \circ \r_p \, \om_{\f} \wedge \om_{\r_p}^{n-1} <+\infty,
$$
hence $\f \in {\mathcal E}_{\chi}(X,\om) \subset {\mathcal E}(X,\om)$.
\end{proof}

\section{Sobolev classes}\label{S:Sob}

\subsection{Weighted gradients}\label{SS:Wgrad}

Let $\chi:\R\rightarrow \R$ be a convex increasing function of
class $C^2$. If $\f \in PSH(X,\om) $ is smooth then
\begin{equation}\label{e:deriv}
\om+dd^c \chi \circ \f=\chi'' \circ \f \, d\f \wedge d^c \f +\chi'
\circ \f \, \om_{\f}+(1-\chi' \circ \f)\om.
\end{equation}
So if $\chi'(-1) \leq 1$ and $\f \leq -1$, then $\chi \circ \f \in
PSH(X,\om)$. It is well-known that ($\om$-)psh functions have
gradient in $L^{2-\e}_{loc}$ for all $\e>0$ \cite{Ho}, but in
general not in $L^2_{loc}$. The previous computation indicates
that a weighted version of the gradient is in $L^2(X)$.

\par We denote by $W^{1,2}(X,\om)$ the set of functions $\f \in PSH(X,\om)$
whose gradient is square integrable. Since $\omega$ is
K\"ahler, we can in fact define, for $\f\in PSH(X,\omega)$, the
function $|\nabla\f|=|\nabla\f|_\omega$ a.e. on $X$ by
$$|\nabla\f|^2:=d\f\wedge d^c\f\wedge\omega^{n-1}/\omega^n.$$
Note that $\f\in W^{1,2}(X,\omega)$ if and only if $|\nabla\f|\in
L^2(X,\omega^n)$.

\begin{pro}\label{P:wgrad}
Let $\chi:\R \rightarrow \R$ be a convex increasing function of
class ${\mathcal C}^2$. Then for every $\f
\in PSH(X,\om)$,
$$
\int_X \chi''\circ\f\,|\nabla
\f|^2\,\om^{n}\leq\sup_X\chi'(\f)\int_X\omega^n.
$$
In particular, if $\f\leq-1$ and $0<p<1/2$, then $-(-\f)^p \in
W^{1,2}(X,\om)$.
\end{pro}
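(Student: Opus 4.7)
The plan is to prove the inequality first for bounded $\om$-psh functions via a direct integration by parts, then pass to general $\f \in PSH(X,\om)$ by monotone approximation with the canonical approximants $\f_j=\max(\f,-j)$. The Sobolev conclusion will follow by applying the main inequality to a suitable weight $\chi$.

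For $\f \in PSH(X,\om) \cap L^{\infty}(X)$, Bedford--Taylor theory gives meaning to $d\f\wedge d^c\f\wedge\om^{n-1}$ and $dd^c\f\wedge\om^{n-1}$ as Borel measures, and the chain rule $d(\chi'\circ\f)=\chi''(\f)\, d\f$ holds in the sense of currents. Stokes' formula on the closed manifold $X$ then yields
$$\int_X \chi''(\f)\, d\f\wedge d^c\f \wedge \om^{n-1} = -\int_X \chi'(\f)\, dd^c\f \wedge \om^{n-1} = \int_X \chi'(\f)\,(\om-\om_\f)\wedge\om^{n-1}.$$
Since $\chi$ is increasing, $\chi'\geq 0$, and $\om_\f\wedge\om^{n-1}$ is a positive measure, the term $\int_X \chi'(\f)\,\om_\f\wedge\om^{n-1}$ is nonnegative and may be discarded, giving
$$\int_X \chi''(\f)\,|\nabla\f|^2\,\om^n \leq \int_X \chi'(\f)\,\om^n \leq \sup_X\chi'(\f)\int_X \om^n.$$
For arbitrary $\f\in PSH(X,\om)$ I apply this to each $\f_j$ and let $j\to\infty$: on $\{\f>-j\}$ we have $\f_j=\f$ and hence $|\nabla\f_j|^2=|\nabla\f|^2$ a.e., while $\nabla\f_j=0$ a.e.\ on $\{\f<-j\}$; since $\chi''\geq 0$ the integrands $\chi''(\f_j)|\nabla\f_j|^2$ increase pointwise a.e.\ to $\chi''(\f)|\nabla\f|^2$, and because $\chi'$ is increasing and $\f_j\geq\f$ the right-hand side is dominated uniformly in $j$ by $\sup_X\chi'(\f)\int_X\om^n$. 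The monotone convergence theorem then finishes the main inequality.

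For the Sobolev assertion, I choose $\chi(t):=-(-t)^{2p}$ on $\R^-$, which is $C^2$, convex, and increasing for $0<p<1/2$, with $\chi''(t)=2p(1-2p)(-t)^{2p-2}>0$ and $\chi'(-1)=2p<1$. Since $\f\leq-1$ and $\chi'$ is increasing, $\sup_X\chi'(\f)\leq 2p$, so the main inequality reads
$$\int_X (-\f)^{2p-2}|\nabla\f|^2\,\om^n \leq \frac{1}{1-2p}\int_X \om^n < +\infty.$$
A chain-rule computation gives $|\nabla(-(-\f)^p)|^2=p^2(-\f)^{2p-2}|\nabla\f|^2$, which together with the previous estimate yields $-(-\f)^p\in W^{1,2}(X,\om)$. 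The only delicate point in the argument is justifying the Stokes/chain-rule identity for bounded but non-smooth $\f$; this is handled by approximating $\f$ from above by smooth $\om$-psh functions (Demailly regularization) and invoking continuity of Bedford--Taylor products under decreasing sequences.
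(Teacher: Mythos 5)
Your proof is correct and takes essentially the same route as the paper: both establish the bound for the canonical approximants $\f_j=\max(\f,-j)$ from the identity $\om+dd^c(\chi\circ\f)=\chi''(\f)\,d\f\wedge d^c\f+\chi'(\f)\,\om_\f+(1-\chi'(\f))\,\om$ integrated against $\om^{n-1}$ (dropping the positive term $\chi'(\f)\,\om_\f\wedge\om^{n-1}$), and then pass to the limit --- the paper via a.e.\ convergence of the gradients and Fatou's lemma, you via locality of the weak gradient on $\{\f>-j\}$ and $\{\f\le -j\}$ plus monotone convergence, which works equally well. One small slip worth fixing: from $\f_j\ge\f$ and $\chi'$ increasing you get $\chi'(\f_j)\ge\chi'(\f)$, which points the \emph{wrong} way for your uniform bound on the right-hand side; the bound nonetheless holds because $\sup_X\f_j=\sup_X\f$ once $j\ge-\sup_X\f$, so $\sup_X\chi'(\f_j)=\chi'(\sup_X\f)=\sup_X\chi'(\f)$ for all large $j$.
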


\begin{proof} Let $M=\sup_X\chi'(\f)$ and $\f^j :=
\max(\f, - j)$ for $j \in \N$. It follows from (\ref{e:deriv})
that
$$
\int_X \chi'' \circ\f^j\,|\nabla \f^j|^2\,\om^{n} =\int_X \chi''
\circ\f^j\,d\f^j\wedge d^c\f^j \wedge \om^{n-1}\leq M\int_X\om^n.
$$
This shows that the sequence $f_j :=\chi''(\varphi^j)\,|\nabla
\varphi^j|^2$ is bounded in $L^1(X,\omega^n)$. Since
$\f_j\searrow\f$, we use \cite[Theorem 4.1.8]{Ho} to conclude,
after taking a subsequence, that $f_j\to\chi''(\varphi)\,|\nabla
\varphi|^2$ a.e. on $X$. The inequality in the statement now
follows from Fatou's lemma.

\par For our second claim, set $\f_a:=-(-\f)^a$, $0<a<1$.
Applying (\ref{e:deriv}) with $\chi(t)=-(-t)^a$ yields $\f_a\in
PSH(X,\omega)$ and $$\int_X (-\f)^{a-2} d\f \wedge d^c \f \wedge
\om^{n-1} \leq \frac{1}{a(1-a)}\,\int_X\omega^n.$$ If $p=a/2$ then
$d \f_p \wedge d^c \f_p=p^2 (-\f)^{2p-2} d\f \wedge d^c \f$, hence
$$
\int_X d\f_p\wedge d^c\f_p\wedge \om^{n-1}\leq
\frac{p}{2(1-2p)}\,\int_X\omega^n.
$$
\end{proof}

\par Note that using  $\chi(t)=t/\log(M-t)$, for $M$ large enough,
one can improve the previous weighted $L^2$-bound to $$ \int_X
(-\f)^{-1}[\log(M-\f)]^{-2} |\nabla \f|^2\,\om^n <+\infty. $$
Observe also that this cannot be improved much farther: in the
case when $\om_{\f}$ is the current of integration along a
hypersurface,  $-(-\f)^{1/2}$ does not have square integrable
gradient.

Next, we show that functions from the class
$DMA_{loc}(X,\om)$ satisfy
stronger weighted gradient boundedness conditions.

\begin{pro}\label{P:WnW12} If
$\f \in DMA_{loc} (X,\om)$, $\f\leq-1$, then
$$ \int_X (- \f)^{n - 2} d \f \wedge d^c \f \wedge
\om^{n - 1} \leq \frac{1 }{n - 1} \int_X (- \f)^{n - 1} \om_{\f}
\wedge\om^{n-1}.$$ In particular, $DMA_{loc}(X,\om)\subset
W^{1,2}(X,\om)$.
\end{pro}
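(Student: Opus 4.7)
The plan is to establish the inequality first for bounded $\om$-psh functions via integration by parts, and then pass to the limit along the canonical approximants $\f^j:=\max(\f,-j)$.

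For a bounded $\om$-psh function $\p\leq -1$, I would use the pointwise identity $d[(-\p)^{n-1}]=-(n-1)(-\p)^{n-2}\,d\p$ together with Stokes' formula on the compact manifold $X$ (valid for bounded $\om$-psh functions by Bedford--Taylor theory) to obtain
$$(n-1)\int_X (-\p)^{n-2}\,d\p\wedge d^c\p\wedge\om^{n-1} = \int_X (-\p)^{n-1}\,dd^c\p\wedge\om^{n-1}.$$
Expanding $dd^c\p=\om_\p-\om$ and discarding the nonnegative term $\int_X(-\p)^{n-1}\om^n$ immediately gives the desired inequality for $\p$.

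Next I would apply this to each $\f^j$ for $j\geq 1$ and pass to the limit. By Theorem \ref{T:DMAloc}, $\f\in DMA_{loc}(X,\om)\subset\mathcal E^{n-1}(\om^{n-1},\om)$, and the weight $\chi(t):=-(-t)^{n-1}$ lies in $\mathcal W^-$. Theorem \ref{T:econvergence}, applied with $T=\om^{n-1}$ and $m=1$, then yields the convergence
$$\int_X (-\f^j)^{n-1}\,\om_{\f^j}\wedge\om^{n-1}\longrightarrow \int_X (-\f)^{n-1}\,\om_\f\wedge\om^{n-1}.$$
For the left-hand side of the approximants' inequality, I would mimic the strategy of Proposition \ref{P:wgrad}: extract a subsequence $\f^{j_k}$ such that $\nabla \f^{j_k}\to\nabla\f$ almost everywhere via \cite[Theorem 4.1.8]{Ho}; combined with the monotone pointwise convergence $(-\f^j)^{n-2}\nearrow(-\f)^{n-2}$, Fatou's lemma along this subsequence gives
$$\int_X (-\f)^{n-2}\,|\nabla\f|^2\,\om^n \;\leq\; \liminf_k \int_X (-\f^{j_k})^{n-2}\,|\nabla\f^{j_k}|^2\,\om^n,$$
and the inequality for $\f$ follows by combining the two limit statements.

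The Sobolev inclusion is then immediate: since $\f\leq -1$, one has $(-\f)^{n-2}\geq 1$ for every $n\geq 2$, hence
$$\int_X |\nabla\f|^2\,\om^n \;\leq\; \int_X (-\f)^{n-2}\,|\nabla\f|^2\,\om^n \;\leq\; \frac{1}{n-1}\int_X (-\f)^{n-1}\,\om_\f\wedge\om^{n-1} < +\infty.$$
The main technical obstacle is the limit passage in the weighted gradient integral: the measures $(-\f^j)^{n-2}\,d\f^j\wedge d^c\f^j\wedge\om^{n-1}$ are not obviously monotone in $j$, so the argument genuinely relies on the Hörmander-type almost everywhere convergence of gradients of psh functions along a subsequence, rather than on any monotonicity of the integrand.
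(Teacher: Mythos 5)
Your proof is correct and follows essentially the same route as the paper: integration by parts (Stokes) for the bounded approximants $\f^j$, a uniform $L^1$ bound on $(-\f^j)^{n-2}|\nabla\f^j|^2$, and then the a.e.\ convergence of gradients from \cite[Theorem 4.1.8]{Ho} combined with Fatou's lemma, exactly as in the proof of Proposition \ref{P:wgrad}. Your explicit appeal to Theorem \ref{T:econvergence} (with $T=\om^{n-1}$ and $\chi(t)=-(-t)^{n-1}$) to pass to the limit on the right-hand side is a detail the paper leaves implicit, and it is a welcome clarification rather than a deviation.
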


\begin{proof} If $\f_j :=\max(\f,-j)$, $j
\in \N$, then integrating by parts we get
$$
(n-1)\int_X(-\f_j)^{n-2}d\f_j\wedge d^c\f_j\wedge\om^{n - 1}
\leq\int_X (- \f_j)^{n - 1} \om_{\f_j} \wedge\om^{n-1}.
$$
The sequence of functions $f_j:=(-\f_j)^{n - 2}|\nabla \f_j|^2$
is thus uniformly bounded in
$L^1(X,\om^n)$. Since $\f_j \searrow \f$, the conclusion follows
as in the previous proof by \cite[Theorem 4.1.8]{Ho} and by
Fatou's lemma.
\end{proof}

\subsection{Blowing up and down}\label{SS:blowup}
We saw in Section \ref{S:E} that there are many functions $\f \in
DMA(X,\om)$ whose gradient does not belong to $L^2(X)$. This
condition, although the best possible in the local two-dimensional
theory, is thus not the right one from the global point of view.
We show here that this condition does not behave well under a
birational change of coordinates. For simplicity, and without loss
of generality, we restrict ourselves to the two-dimensional local
setting.

\par Let $\pi:Y\rightarrow B$ be the blow up at the origin of
a ball $B \subset \C^2$, and let $E$ be the exceptional divisor.

\begin{lem}\label{L:intblow}
If $\delta>0$ then
$\pi^\star\left(L^{2+\delta}_{loc}(B)\right)\subset L^1_{loc}(Y)$,
but $\pi^\star\left(L^2_{loc}(B)\right)\not\subset L^1_{loc}(Y)$.
\end{lem}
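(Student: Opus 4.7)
The plan is to reduce both claims to a single question: whether $|f(z)|\cdot |z|^{-2}$ is locally integrable near the origin of $B \subset \C^2$. The route is to compute the Jacobian of $\pi$ in the standard blow-up charts, then apply Hölder's inequality for the positive direction and produce an explicit logarithmic example for the negative one.

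First I would work in the two standard charts $V_1,V_2$ on $Y$ with coordinates $(z,w)$, $(z',w')$ and maps $\pi_1(z,w)=(z,zw)$, $\pi_2(z',w')=(z'w',w')$. The complex Jacobians are $z$ and $w'$ respectively, so $\pi_i^*\,dV_B=|z|^2\,dV_Y$ in $V_1$ (resp. $|w'|^2\,dV_Y$ in $V_2$). Taking the compact pieces $K_1=\{|z|\le r,\,|w|\le 1\}\subset V_1$ and $K_2=\{|z'|\le 1,\,|w'|\le r\}\subset V_2$ (which together cover a neighbourhood of $E$), a change of variables gives
$$ \int_{K_1}|f\circ\pi_1|\,dV_Y=\int_{\pi_1(K_1)}|f(u,v)|\,|u|^{-2}\,dV_B,$$
with $\pi_1(K_1)=\{|u|\le r,\,|v|\le |u|\}$, and an analogous identity in $V_2$. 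On the horn $\{|v|\le|u|\}$ one has $|u|^{-2}\asymp(|u|^2+|v|^2)^{-1}$, and similarly on $\{|u|\le|v|\}$ for $|w'|^{-2}$. Since $\pi_1(K_1)\cup\pi_2(K_2)$ contains a full neighbourhood of $0\in B$, while outside a neighbourhood of $E$ the map $\pi$ is a biholomorphism (so the question is trivial there), one obtains the equivalence
$$ \pi^* f\in L^1_{loc}(Y)\ \Longleftrightarrow\ |f(z)|\cdot|z|^{-2}\in L^1_{loc}(B)\ \text{near}\ 0. $$

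For the first inclusion, let $f\in L^{2+\delta}_{loc}(B)$ and apply Hölder with $p=2+\delta$ and $q=(2+\delta)/(1+\delta)$:
$$ \int_{|z|\le r}|f(z)|\,|z|^{-2}\,dV\le \|f\|_{L^{p}(|z|\le r)}\Bigl(\int_{|z|\le r}|z|^{-2q}\,dV\Bigr)^{1/q}.$$
In $\R^4$, $|z|^{-2q}$ is integrable near $0$ iff $2q<4$, and one checks that $q<2$ precisely when $\delta>0$. Hence $|f|\,|z|^{-2}\in L^1_{loc}(B)$ and $\pi^*f\in L^1_{loc}(Y)$.

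For the failure at the borderline exponent, take
$$ f(z):=|z|^{-2}\bigl(\log(2/|z|)\bigr)^{-1}$$
on $B$ (suitably cut off far from $0$). Using $\rho=|z|$, $dV\asymp \rho^3\,d\rho\,d\sigma$ and the substitution $s=\log(2/\rho)$, one gets
$$\int|f|^2\,dV\asymp \int_0^{1}\!\rho^{-1}(\log(2/\rho))^{-2}\,d\rho=\int_{\log 2}^{\infty}s^{-2}\,ds<+\infty,$$
so $f\in L^2_{loc}(B)$, while
$$\int|f|\,|z|^{-2}\,dV\asymp \int_0^{1}\!\rho^{-1}(\log(2/\rho))^{-1}\,d\rho=\int_{\log 2}^{\infty}s^{-1}\,ds=+\infty,$$
so $\pi^*f\notin L^1_{loc}(Y)$ by the equivalence above.

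The only delicate point is the change-of-variables accounting across the two charts: a pure power $|z|^{-a}$ cannot separate $L^2$ from the failure of $|z|^{-2}$-weighted $L^1$, which forces the use of the logarithmic correction; the rest is bookkeeping with the Jacobian.
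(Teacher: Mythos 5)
Your proof is correct and follows essentially the same route as the paper's: pull back the volume form through the blow-up chart to reduce everything to the local integrability of $|f(z)|\,\|z\|^{-2}$, apply H\"older with conjugate exponents $2+\delta$ and $(2+\delta)/(1+\delta)<2$ for the positive direction, and use the same logarithmic function $\|z\|^{-2}(\log\|z\|)^{-1}$ for the counterexample. The only cosmetic difference is that the paper works in a single chart over the horn $K=\{|y|<M|x|\}$ (where $|x|^{-2}\in L^{2-\epsilon}$) rather than assembling the two charts into a two-sided equivalence over a full neighbourhood of the origin.
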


\begin{proof} Let $z=(x,y)\in B$, and fix a coordinate chart in $Y$
given by $(s,t)\rightarrow(s,st,[1:t])\in Y$. Then
$\pi(s,t)=(x,y)$, $x=s$, $y=st$. For fixed positive constants $M$
and $a$, let
$$\Delta=\{(s,t):\,|s|<a,\;|t|<M\}\;,\;
K=\pi(\Delta)=\{(x,y):\,|x|<a,\;|y|<M|x|\}.$$

\par Let $f\in L^{2+\delta}_{loc}(B)$, and $\widetilde
f=\pi^\star f$, so $\widetilde f(s,t)=f(s,st)$. Note that the
function $z\rightarrow|x|^{-2}$ is in $L^{2-\epsilon}(K)$ for
every $\epsilon>0$. We take $\gamma=\delta/(1+\delta)$ and apply
H\"older's inequality with conjugate exponents $2+\delta$ and
$2-\gamma$:
$$\int_\Delta|\widetilde f|=\int_K\frac{|f|}{|x|^2}
\leq\left(\int_K|f|^{2+\delta}\right)^{1/(2+\delta)}
\left(\int_K\frac{1}{|x|^{4-2\gamma}}\right)^{1/(2-\gamma)}.$$
Hence $\widetilde f\in L^1_{loc}(Y)$. The function
$f(z)=1/(\|z\|^2\log\|z\|)$ is in $L^2_{loc}(B)$, but $|\widetilde
f(s,t)|\geq C|s|^{-2}(|\log|s||+1)^{-1}$ on $\Delta$, for some
$C>0$. So $\widetilde f\not\in L^1({\Delta})$.
\end{proof}

\par In dimension $n$, a similar proof shows that
$\pi^\star\left(L^{n+\delta}_{loc}(B)\right)\subset L^1_{loc}(Y)$.

\begin{exa}\label{E:w12blow}
$\pi^\star\left(PSH\cap W^{1,
4}_{loc}(B)\right)\not\subset W^{1,2}_{loc}(Y)$. Indeed, let
$$u_\alpha(z)=-(-\log\|z\|)^\alpha\;,\;0<\alpha<1.$$ One checks
easily that $u_\alpha\in W^{1,4}(B)$ if $\alpha<3/4$. Let
$\widetilde u_\alpha=\pi^\star u_\alpha$. With the notation from
the proof of Lemma \ref{L:intblow}, we have for $|s|$ small
$$\widetilde
u_\alpha(s,t)=-\left(-\log|s|-\log\sqrt{1+|t|^2}\right)^\alpha
\;,\;\left|\frac{\partial\widetilde u_\alpha}{\partial
s}(s,t)\right|\geq \frac{C}{|s|(-\log|s|)^{1-\alpha}}\;,$$ for
some constant $C>0$. So $\widetilde u_\alpha\not\in
W^{1,2}_{loc}(\Delta)$ if $\alpha\geq1/2$. Note that if $u\in
PSH\cap W^{1, 4+\delta}_{loc}(B)$, $\delta>0$, then, by the
Sobolev embedding theorem, $u$ is continuous and so is $\pi^\star
u$. Hence $\pi^\star u\in W^{1,2}_{loc}(Y)$ (see \cite{Bl04}).
\end{exa}

\par The previous example shows that the condition $\nabla \f \in
L^2(X)$ does not behave well under blow-up. We show it behaves
well under blowing down.

\begin{pro}\label{P:blowdown}
Let $T$ be a positive closed current of bidegree (1,1) on $B$, and
let $R=\pi^\star T-\nu[E]$, where $\nu=\nu(T,0)$. If $R$ has psh
potentials in $W^{1,2}_{loc}(Y)$, then $T$ has psh potentials in
$W^{1,2}_{loc}(B)$.
\end{pro}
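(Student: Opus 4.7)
The argument is local near $0\in B$ (away from the origin $\pi$ is a biholomorphism, so the conclusion is immediate). Let $u$ be a psh potential of $T$, fix a ball $B'\subset\subset B$ containing $0$, and set $Y':=\pi^{-1}(B')$. I would cover $Y'$ by finitely many coordinate charts; on each chart $U$ with holomorphic coordinates $(s,t)$ in which $E=\{s=0\}$, Siu's decomposition $\pi^\star T=R+\nu[E]$ together with the fact that any psh potential of $\nu[E]$ agrees with $\nu\log|s|$ modulo a pluriharmonic function gives $\pi^\star u=v+\nu\log|s|$ on $U$, where $v$ is a psh potential of $R$ and hence, by hypothesis, lies in $W^{1,2}_{loc}(U)$. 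Since in dimension two Blocki's characterization (see \cite{Bl04} and Theorem~\ref{T:DMAloc}) identifies $DMA_{loc}$ with $PSH\cap W^{1,2}_{loc}$, it suffices to show
\[
\sup_j\int_{B'}du_j\wedge d^c u_j\wedge\om_B<+\infty,\qquad u_j:=\max(u,-j),
\]
where $\om_B$ denotes the Euclidean K\"ahler form on $B$.

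The plan is to push these integrals upstairs via the birational change of variables
\[
\int_{B'}du_j\wedge d^c u_j\wedge\om_B=\int_{Y'}d\pi^\star u_j\wedge d^c\pi^\star u_j\wedge\pi^\star\om_B.
\]
On each chart $U$ the Bedford-Taylor measure $d\pi^\star u_j\wedge d^c\pi^\star u_j$ is concentrated on $\{\pi^\star u>-j\}\subset U\setminus E$, where $\pi^\star u_j=v+\nu\log|s|$; there it expands as
\[
dv\wedge d^c v+\nu^2 d\log|s|\wedge d^c\log|s|+\nu\bigl(dv\wedge d^c\log|s|+d\log|s|\wedge d^c v\bigr).
\]

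The first piece integrates finitely against $\pi^\star\om_B$ since $\pi^\star\om_B$ is smooth and bounded on $Y$ while $v\in W^{1,2}_{loc}(Y)$. The decisive point is the second piece: in the chart $(s,t)\mapsto(s,st)$ one computes that the $dt\wedge d\bar t$ component of $\pi^\star\om_B$ carries a factor of $|s|^2$, precisely compensating the $|s|^{-2}$ singularity of $d\log|s|\wedge d^c\log|s|$, so that $d\log|s|\wedge d^c\log|s|\wedge\pi^\star\om_B$ has a bounded smooth density and is integrable on $Y'$. For the cross term I would invoke the Cauchy-Schwarz inequality associated with the semi-positive $(1,1)$-form $\pi^\star\om_B$,
\[
\Bigl|\int dv\wedge d^c\log|s|\wedge\pi^\star\om_B\Bigr|^2\leq\Bigl(\int dv\wedge d^c v\wedge\pi^\star\om_B\Bigr)\Bigl(\int d\log|s|\wedge d^c\log|s|\wedge\pi^\star\om_B\Bigr),
\]
which is then finite by the previous two bounds. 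Summing over the finite cover of $Y'$ yields an upper bound independent of $j$, as required.

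The one delicate step is justifying the Cauchy-Schwarz inequality above for the semi-positive, degenerate form $\pi^\star\om_B$. I would handle this either by interpreting $(\alpha,\beta)\mapsto i\alpha\wedge\bar\beta\wedge\pi^\star\om_B$ as a positive semi-definite Hermitian pairing on $(1,0)$-forms and applying pointwise Cauchy-Schwarz before integrating, or by the standard regularization $\pi^\star\om_B+\e\om_Y$ with $\om_Y$ a K\"ahler form on $Y$, using the strictly positive case and letting $\e\to 0$ with the help of the explicit bounds on Terms~1 and~2.
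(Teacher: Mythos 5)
Your proposal is correct and is essentially the paper's argument in a different (upstairs) presentation: the paper writes $u(x,y)=v(x,y/x)+\nu\log|x|$ on the cones $K=\{|y|<M|x|\}$ and $K'=\{|x|<M|y|\}$, differentiates, and bounds each term of $\nabla u$ in $L^2$ by changing variables back to the chart $\Delta$, where the Jacobian factor $|s|^2$ performs exactly the cancellation you attribute to the degeneracy of $\pi^\star\om_B$ along $E$ (together with $1/x\in L^2(K)$). The only cosmetic differences are that the paper handles the cross terms by the triangle inequality in $L^2$ rather than your Cauchy--Schwarz for the semi-positive pairing, and works directly with $u$ rather than the truncations $u_j$.
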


Recall that any positive closed $(1,1)$-current in the blow up
of $B$ at the origin writes $R=\pi^*T-\nu [E]+\l[E]$, where
$T$ is a positive closed current in $B$, $\nu=\nu(T,0)$ is the Lelong
number of $T$ at the origin, and $\l \geq 0$.
Clearly $R$ cannot have potentials in $W_{loc}^{1,2}$ if $\l>0$.

\begin{proof} Let $u$ be a psh potential of $T$ on $B$.
We only have to check that the gradient of $u$ is $L^2$-integrable
in a neighborhood of the origin. Using the notation from the proof
of Lemma \ref{L:intblow}, a psh potential for $R$ on $\Delta$ is
$v(s,t)=u(s,st)-\nu\log|s|$. Hence on $K$ we have
$u(x,y)=v(x,y/x)+\nu\log|x|$. Therefore for almost all $(x,y)\in
K$
$$\frac{\partial u}{\partial x}(x,y)=\frac{\partial
v}{\partial s}(x,y/x)-\frac{y}{x^2}\,\frac{\partial v}{\partial
t}(x,y/x)+\frac{\nu}{2x}\;,\;\frac{\partial u}{\partial
y}(s,t)=\frac{1}{x}\,\frac{\partial v}{\partial t}(x,y/x).$$ Now
\begin{eqnarray*}\int_K\frac{|y|^2}{|x|^4}\left|\frac{\partial v}
{\partial t}(x,y/x)\right|^2&=&\int_\Delta|t|^2\left|
\frac{\partial v}{\partial t}(s,t)\right|^2,\\
\int_K\frac{1}{|x|^2}\left|\frac{\partial v}{\partial
t}(x,y/x)\right|^2&=&\int_\Delta\left|\frac{\partial v}{\partial
t}(s,t)\right|^2.\end{eqnarray*} Since the function
$(x,y)\rightarrow1/x$ is in $L^2(K)$, we conclude that the partial
derivatives of $u$ belong to $L^2(K)$. A similar argument shows
that the partial derivatives of $u$ are in $L^2(K')$, where
$K'=\{(x,y):\,|y|<a,\;|x|<M|y|\}$.
\end{proof}

\subsection{Compact singularities}\label{SS:compactsing}
We give here an important class of functions in $DMA_{loc}(X,\om)$. Let
$D$ be a divisor on $X$ and set
$$L_D^{\infty}(X,\om)=\{ \f \in PSH(X,\om) \,/\, \f \text{ is bounded
near } D \}.$$ Thus the singularities of $\f \in
L_D^{\infty}(X,\om)$ are constrained to a compact subset of $X
\setminus D$. When $D=H$ is a hyperplane of the complex projective
space $X=\P^n$, the set $L_D^{\infty}(X,\om)$ is in one-to-one
correspondence with the Lelong class ${\mathcal L}^+(\C^n)$ of psh
functions $u$ in $\C^n$ such that $u(z)-\log||z||$ is bounded near
infinity. So these are the $\om$-psh analogues of the psh
functions with compact singularities introduced by Sibony in
\cite{Si85} (see also \cite{De93}).

\begin{pro}\label{P:div} If $D$ is an ample divisor then
$L_D^{\infty}(X,\om)\subset DMA_{loc}(X,\om)$.
\end{pro}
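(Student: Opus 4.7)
The strategy is to verify the energy criterion from Theorem \ref{T:DMAloc}: it is enough to show that for every $1 \leq p \leq n-1$,
\begin{equation*}
\sup_j \int_X (-\f_j)^p \, \om_{\f_j}^{n-p} \wedge \om^p < +\infty,
\end{equation*}
where $\f_j := \max(\f,-j)$ denotes the canonical decreasing approximation. Normalise so that $\f \leq -1$ and $-M \leq \f \leq -1$ on an open neighbourhood $V$ of $D$.

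The first step is to exploit ampleness of $D$ to manufacture an $\om$-psh barrier with logarithmic pole along $D$. For $m$ large, choose a smooth Hermitian metric $h$ on $\mathcal{O}(mD)$ with positive curvature $m\om$, and let $s$ be the defining section of $mD$. Then $u := \tfrac{1}{m}\log|s|_h \in PSH(X,\om)$, $u \leq 0$, is smooth on $X \setminus D$ and equals $-\infty$ on $D$. Rescaling by a small $\e > 0$ (which keeps $\e u$ inside $PSH(X,\om)$, since the latter is convex), the function $\psi := \e u$ can be arranged so that $V' := \{\psi < -M-1\}$ is contained in $V$; in particular $\psi \leq \f$ on $V'$, while $\psi$ is smooth and bounded in $[-M-1,0]$ on $X \setminus V'$.

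Next, I would split the integral into contributions over $V'$ and over $X \setminus V'$. The first is immediate: for $j \geq M$ one has $\f_j = \f$ on $V'$ with $|\f_j| \leq M$, so
\begin{equation*}
\int_{V'} (-\f_j)^p \, \om_{\f_j}^{n-p} \wedge \om^p \leq M^p \int_X \om^n.
\end{equation*}
For the second contribution, I would compare $\f$ with the globally bounded $\om$-psh function $G := \max(\f,\psi)$: one checks that $G = \f$ on $V'$ and that $-M-1 \leq G \leq 0$ on $X$, so $G \in DMA_{loc}(X,\om)$ trivially with finite Monge-Amp\`ere-type energies. Using the generalised comparison principle (Theorem \ref{T:comparison}) together with integration by parts against the bounded $\om$-psh weights $\psi_k := \max(\psi,-k)$, one should bound $\int_{X \setminus V'} (-\f_j)^p \om_{\f_j}^{n-p} \wedge \om^p$ in terms of the fixed energy of $G$ and of $\|\psi\|_{L^\infty(X \setminus V')} \leq M+1$.

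The main obstacle is controlling the set $\{\f_j < \psi\} \subset X \setminus V'$, where $\f_j$ is strictly more singular than the barrier $\psi$ and the pointwise weight $(-\f_j)^p$ grows with $j$. Here the comparison principle forces the Monge-Amp\`ere mass of $\om_{\f_j}^{n-p} \wedge \om^p$ concentrated on this set to be controlled by the (finite, fixed) mass of $\om_\psi^{n-p} \wedge \om^p$. A Stokes-formula argument, with a cut-off supported away from $D$, then converts the unbounded weight $(-\f_j)^p$ into bounded integrals of $\psi$ against $\om_{\f_j}^{n-p} \wedge \om^p$, yielding the desired uniform bound.
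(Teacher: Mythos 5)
Your reduction to the energy criterion of Theorem \ref{T:DMAloc} is the right target, and the estimate over $V'$ (where $\f$ is bounded) is fine; but the heart of the matter is the uniform bound on $\int_{X\setminus V'}(-\f_j)^p\,\om_{\f_j}^{n-p}\wedge\om^p$, and your argument for it does not work. First, the comparison principle runs in the opposite direction from the one you need: Theorem \ref{T:comparison} controls $\int_{\{\f_j<\psi\}}\om_\psi^n$ by the mass of $\om_{\f_j}^n$, not the $\om_{\f_j}$-mass of $\{\f_j<\psi\}$ by that of $\om_\psi$. The Monge-Amp\`ere mass of the \emph{more singular} function on the set where it is smaller is precisely what no comparison principle can control: take $\psi\equiv0$ and $\f_j=\max(\log \mathrm{dist}(\cdot,q),-j)$ near a point $q\notin V$; essentially all of the mass of $\om_{\f_j}^n$ sits inside $\{\f_j<\psi\}$, while $\om_\psi^n$ is just $\om^n$ there. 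Second, even if mass were controlled it would not help, since mass is never the issue ($\int_X\om_{\f_j}^{n-p}\wedge\om^p=\int_X\om^n$ for every $j$ by Stokes); the difficulty lies entirely in the unbounded weight $(-\f_j)^p$ on $X\setminus V'$. Your closing sentence, that a Stokes-formula argument ``converts the unbounded weight $(-\f_j)^p$ into bounded integrals of $\psi$'', is exactly the step that needs a mechanism and has none: integrating $(-\f_j)^p\,\om_{\f_j}^{n-p}\wedge\om^p$ by parts produces the weighted gradient terms of (\ref{e:5}), which are the quantities one is trying to bound, not quantities already under control.

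The device that actually makes the proof work, and which your barrier $\psi=\e u$ with a logarithmic pole along $D$ does not replace, is a \emph{smooth semipositive representative} $\om'$ of the cohomology class of $D$ with $\om'\equiv 0$ outside $V$, so that (up to a constant) $\om=\om'+dd^c\chi$ with $\chi$ smooth and globally bounded. Each factor of $\om$ in the energy integral is then traded for $\om'$, which is supported where $\f$ is bounded and hence harmless, plus $dd^c\chi$, which is integrated by parts onto the remaining factors at the cost of $\|\chi\|_{L^\infty(X)}$, using $-dd^c\f\leq\om$ and $\chi\om_\f\geq0$. Iterating this over the factors of $\om$ and over the powers of $(-\f)$ yields $\f^p\in L^1(\om_\f^{n-p}\wedge\om^p)$ for all $1\leq p\leq n-1$. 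Note that your $\psi$ is singular exactly where $\f$ is tame (on $D$) and bounded exactly where $\f$ is wild, so it cannot absorb the weight; to exploit it you would need a Chern-Levine-Nirenberg-type estimate with $\log|s|_h$ as a plurisubharmonic weight, which is a genuinely different and more delicate route than the one you sketch.
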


\begin{proof} Fix $\f \in L_D^{\infty}(X,\om)$ and let $V$ be a small
neighborhood of $D$, so that $\f$ is bounded in $V$. We can assume
that $\f \leq 0$ and $\int_X\om^n=1$.

\par Let $\om'$ be a smooth semi-positive closed $(1,1)$ form in the
cohomology class of $D$, such that $\om' \equiv 0$ in $X \setminus
V$. Since $D$ is ample, $\om'$ is cohomologous to a K\"ahler form
$\omega_0$. For simplicity, we assume $\om_0=\om$ (otherwise we
bound $\om\leq C\om_0$ in all arguments below). Hence
$\om=\om'+dd^c \chi$, where $\chi$ is a smooth function on $X$,
chosen to be either negative or positive, as we like.

\par We assume here $\chi \geq 0$, and we first observe that $\f \in
L^1(\om_{\f} \wedge \om^{n-1})$:
\begin{eqnarray*}
\int_X (-\f)&& \! \! \! \! \! \! \! \! \! \! \! \! \! \! \!
\om_{\f} \wedge \om^{n-1} =
\int_X (-\f) \om_{\f} \wedge \om'\wedge \om^{n-2}+
\int_X (-\f) \om_{\f} \wedge dd^c \chi \wedge \om^{n-2} \\
&\leq& ||\f||_{L^{\infty}(V)} \int_X \om_{\f} \wedge \om' \wedge \om^{n-2}
+\int_X \chi \om_{\f} \wedge (-dd^c \f ) \wedge \om^{n-2}\\
&\leq& ||\f||_{L^{\infty}(V)}+||\chi||_{L^{\infty}(X)}<+\infty,
\end{eqnarray*}
since $-dd^c \f \leq \om$, $\chi \om_{\f} \geq 0$ and
$\int_X \om_{\f} \wedge \om' \wedge \om^{n-2}=\int_X \om^n=1$.

\par It follows that the positive current $\om_{\f}^2:=\om_{\f}
\wedge \om+dd^c(\f \om_{\f})$ is well defined. We can thus show by
a similar argument that $\f \in L^1(\om_{\f}^2 \wedge \om^{n-2})$,
so that $\om_{\f}^3$ is also well defined, and so on. At last, we
show that $\f \in L^1(\om_{\f}^{n-1} \wedge \om)$.

\par We now prove that $\f^2 \in L^1(\om_{\f}^{n-2} \wedge \om^2)$.
We assume here that $\chi \leq 0$. Observe that $ -dd^c \f^2=-2
d\f \wedge d^c \f -2\f dd^c \f \leq 2(-\f) \om_{\f}, $ therefore
$$
\int_X\f^2  \om_{\f}^{n-2} \wedge \om^2\leq \int_X \f^2
\om_{\f}^{n-2} \wedge \om' \wedge \om+ 2\|\chi\|_{L^{\infty}(X)}
\int_X (-\f) \om_{\f}^{n-1} \wedge \om.
$$
The integrals are finite because $\om'$ has support in $V$, where
$\f$ is bounded, and because $\f \in L^1(\om_{\f}^{n-1} \wedge
\om)$. Thus $\f^2 \in L^1(\om_{\f}^{n-2} \wedge \om^2)$. Similar
integration by parts allows us to show that $\f^3 \in
L^1(\om_{\f}^{n-3} \wedge \om^3)$, by using $\f^2 \in
L^1(\om_{\f}^{n-2} \wedge \om^2)$. Continuing like this, we see
that $\f \in DMA_{loc}(X,\om)$.
\end{proof}

\par Proposition \ref{P:div} shows that $PSH(X,\om) \cap
L^{\infty}_{loc}(X \setminus \{p\}) \subset DMA_{loc}(X,\om)$. Indeed,
if $X$ is {\it projective}, one can find for each $p \in X$ a
divisor $D \not\ni p$. In the general case, it follows from the
proof that one only needs to construct a smooth semi-positive form
$\om'$ cohomologous to a K\"ahler form, and such that $\omega'
\equiv 0$ near $p$. This can be achieved on any {\it K\"ahler}
manifold.

\section{Concluding remarks}\label{S:conclusion}

In this section we restrict our attention to the two-dimensional case. In the local setting of an
open subset $U\subset{\Bbb C}^2$, a psh function $u$ on $U$ belongs to the domain of definition
${\mathcal D}(U)$ if and only if the gradient of $u$ is locally square integrable ($W^{1,2}$) on
 $U$ \cite{Bl04}. Such characterization allows to prove important properties of
${\mathcal D}(U)$, such as convexity and stability under taking the maximum of elements of
${\mathcal D}(U)$ with arbitrary psh functions (see \cite{Bl04}).

\par Let $X$ be a compact K\"ahler surface and $\omega$ be a K\"ahler form on $X$. In order to prove
further properties of $DMA(X,\omega)$ it would be useful to obtain equivalent characterizations for
this domain. We present here the connection in certain cases between  $DMA(X,\omega)$ and certain
energy classes.

\subsection{Direct sums}\label{SS:P1P1}
Let $X={\Bbb P}^1\times{\Bbb P}^1$
and $\omega=\omega_1+\omega_2$, where
$\omega_i=\pi_i^*\omega'$ is the pull-back of the
Fubini-Study form $\om'$ of ${\Bbb P}^1$ by the projection onto the $i^{th}$ factor, $i=1,2$.

\begin{pro}\label{P:energyP1} Let $\varphi\in PSH(X,\omega)$ be of the form
$$
\varphi(x,y)=u(x)+v(y),
\text{ where }
u,v\in PSH({\Bbb P}^1,\omega').
$$
Then
\par (i) $\varphi\in{\mathcal E}^1(\omega,\omega)\Longleftrightarrow
u,v\in{\mathcal E}^1({\Bbb P}^1,\omega')\Longleftrightarrow \varphi\in{\mathcal E}^1(X,\omega)$;

\par(ii) $\varphi\in {\mathcal E}(\omega,\omega)\Longleftrightarrow
u,v\in{\mathcal E}({\Bbb P}^1,\omega')\Longleftrightarrow\varphi\in{\mathcal E}(X,\omega)$;

\par (iii) $\f \in DMA(X,\om) \Longleftrightarrow \f \in {\mathcal E}(\omega,\omega)$.
\end{pro}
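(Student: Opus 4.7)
\medskip

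\noindent\textbf{Proof strategy for Proposition \ref{P:energyP1}.} The plan is to reduce everything to one-variable computations via Fubini, exploiting the fact that $\omega' \wedge \omega' = 0$ on $\mathbb{P}^1$ for dimensional reasons. Writing $u,v$ for the pullbacks of $u(x),v(y)$ to $X$, for bounded $u,v$ we have $\omega_\f = \omega'_u + \omega'_v$, and
$$\omega_\f \wedge \omega = \omega'_u \wedge \omega_2 + \omega'_v \wedge \omega_1, \qquad \omega_\f^2 = 2\, \omega'_u \wedge \omega'_v,$$
since $\omega_i \wedge \omega_i = 0$ and $\omega'_u \wedge \omega_1 = 0$, $\omega'_v \wedge \omega_2 = 0$. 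For unbounded $u,v$ these identities still hold for the canonical approximants $u_j = \max(u,-j)$, $v_j = \max(v,-j)$, and pass to the limit by the one-variable continuity of $\omega'_{u_j} \to \omega'_u$ (weak convergence of measures on $\mathbb{P}^1$), tested against tensor products.

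\medskip

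For part (i), assuming $u,v \leq 0$, Fubini gives
$$\int_X (-\f)\, \omega'_u \wedge \omega_2 = \int_{\mathbb{P}^1}(-u)\, \omega'_u \cdot \int_{\mathbb{P}^1}\omega' + \int_{\mathbb{P}^1}\omega'_u \cdot \int_{\mathbb{P}^1}(-v)\,\omega',$$
so $\int_X(-\f)\,\omega_\f \wedge \omega$ and $\int_X(-\f)\,\omega_\f^2$ are both finite linear combinations of $\int(-u)\omega'_u$, $\int(-v)\omega'_v$ and $\int(-u)\omega'$, $\int(-v)\omega'$. The last two are automatically finite since $\om'$-psh functions are integrable. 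This yields the three equivalences of (i) simultaneously.

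\medskip

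For part (ii), the pluripolar set $\{\f = -\infty\}$ decomposes as $(\{u=-\infty\}\times \mathbb{P}^1) \cup (\mathbb{P}^1 \times \{v=-\infty\})$. Applying Fubini to each piece of $\omega_\f \wedge \omega$ and $\omega_\f^2$ shows that the mass of $\omega_\f \wedge \omega$ (respectively $\omega_\f^2$) on $\{\f=-\infty\}$ equals (up to nonzero constants) $\omega'_u(\{u=-\infty\}) + \omega'_v(\{v=-\infty\})$. Thus both $\f \in \mathcal E(\omega,\omega)$ and $\f \in \mathcal E(X,\omega)$ are equivalent to $u,v \in \mathcal E(\mathbb{P}^1,\omega')$, i.e. to $u,v$ having zero Lelong numbers everywhere.

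\medskip

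For part (iii), the implication $\mathcal E(\omega,\omega) \Rightarrow DMA(X,\omega)$ is immediate since by (ii) we then have $\f \in \mathcal E(X,\omega) \subset DMA(X,\omega)$. The hard part is the converse: if $\f \in DMA(X,\omega)$, we must force $u,v \in \mathcal E(\mathbb{P}^1,\omega')$. For this I would argue by contradiction. If, say, $u \notin \mathcal E(\mathbb{P}^1,\omega')$ then there exists $x_0$ with $\nu(u,x_0)>0$, which means the current $\omega'_u$ (viewed on $X$) has Lelong number $\nu(u,x_0)$ at every point of the fiber $\{x_0\}\times \mathbb{P}^1$, and hence so does $\omega_\f = \omega'_u+\omega'_v$. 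This produces an uncountable subset of $E^+(\f)$, contradicting the countability statement of Proposition \ref{P:cLn}. Hence $u,v \in \mathcal E(\mathbb{P}^1,\omega')$ and we conclude by (ii).

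\medskip

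The main obstacle is the rigorous justification of the product-current formulas $\omega_\f \wedge \omega$ and $\omega_\f^2$ when $u,v$ are unbounded: one must show that an arbitrary decreasing sequence $\f_k \searrow \f$ of bounded $\om$-psh functions on $X$ (not necessarily of product form) produces the same limiting measures. This is settled however by the fact that we already know $\f \in \mathcal E(X,\om) \subset \widehat{DMA}(X,\om)$ in the easy direction, so all decreasing approximations converge to the same Monge--Ampère measure $2\omega'_u \wedge \omega'_v$.
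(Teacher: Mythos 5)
Your parts (i) and the first equivalence of (ii) follow the same Fubini-type decomposition as the paper and are essentially fine (modulo a small circularity in (ii): you invoke the formula $\omega_\f^2=2\,\omega'_u\wedge\omega'_v$ to compute the mass on $\{\f=-\infty\}$, but that formula presupposes $\f$ already has a well-defined Monge--Amp\`ere measure; the paper instead works with the always-defined non-pluripolar part $\mu(\f)$ via the plurifine locality identity ${\bf 1}_{E_j}\omega_{\varphi_{2j}}^2=2\cdot{\bf 1}_{E_j}\omega_{u_j}\wedge\omega_{v_j}$ on $E_j=\{u>-j\}\cap\{v>-j\}$, which avoids this). The genuine gap is in the hard direction of (iii). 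You reduce ``$u\notin{\mathcal E}(\P^1,\omega')$'' to ``$u$ has a positive Lelong number at some point,'' but this characterization is false in dimension one: ${\mathcal E}(\P^1,\omega')$ consists of the $\omega'$-psh functions whose Riesz measure does not charge the polar set $\{u=-\infty\}$, and a measure can be concentrated on a polar set while having no atoms at all. The paper's own Example \ref{E:Evans} is built on exactly such a function (the Evans potential of a polar Cantor set, whose potential has zero Lelong numbers everywhere yet whose measure lives on $\{u=-\infty\}$). Consequently your appeal to the countability of $E^+(\f)$ from Proposition \ref{P:cLn} only rules out the atomic case and says nothing about the general one.

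For the genuinely hard case the paper has to produce, by hand, a decreasing sequence in $DMA(X,\omega)$ that is \emph{not} of product form: $\Phi_j(z,w)=\max\bigl(U_j(z)+V(w),\log|z-\zeta|-j\bigr)$, which lies in $DMA(X,\omega)$ by the compact-singularities criterion (Proposition \ref{P:div}), and then a test-function computation showing $\int_{\{z=\zeta\}\times K}\omega_\f^2\geq a>0$ for \emph{every} $\zeta\in\C$, contradicting finiteness of the total mass. This construction is the essential content of (iii) and is entirely absent from your proposal; your closing remark that all approximations agree because $\f\in{\mathcal E}(X,\omega)\subset\widehat{DMA}(X,\omega)$ only applies in the easy direction, whereas in the hard direction $\f$ is precisely \emph{not} known (and in fact fails) to be in ${\mathcal E}(X,\omega)$.
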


\begin{proof} $(i)$ Note that
\begin{eqnarray*}
\int\varphi\,\omega_\varphi\wedge\omega& = &
\int u\,\omega_{1,u}\wedge\omega_2+\int v\,\omega_1\wedge\omega_{2,v}+ \\
&&\int u\,\omega_1\wedge\omega_{2,v}+
\int v\,\omega_{1,u}\wedge\omega_2,
\end{eqnarray*}
where the integrals $\int u\,\omega_1\wedge\omega_{2,v}$ and
$\int v\,\omega_{1,u}\wedge\omega_2$ are always finite by Fubini's theorem.
We use here the obvious notations $\om_{1,u}:=(\om_1+dd^c u)(x)$
and $\om_{2,v}:=(\om_2+dd^c v)(y)$.
This shows that
$\varphi\in{\mathcal E}^1(\omega,\omega)$ if and only if $u,v\in{\mathcal E}^1({\Bbb P}^1,\omega')$.

\par If $u,v\in{\mathcal E}^1({\Bbb P}^1,\omega')$ then $\varphi\in W^{1,2}$, hence
$\varphi\in\widehat{DMA}(X,\omega)$ and $\omega_\varphi^2=2\omega_{1,u}\wedge\omega_{2,v}$.
By Fubini's theorem
$$\int\varphi\,\omega_\varphi^2=2\int_{{\Bbb P}^1}u\,\omega_{1,u}+2\int_{{\Bbb P}^1}v\,\omega_{2,v},$$ so
$\omega_\varphi^2(\{\varphi=-\infty\})=0$. We conclude that $\varphi\in{\mathcal E}(X,\omega)$, hence
$\varphi\in{\mathcal E}^1(X,\omega)$. This formula also shows that $\varphi\in{\mathcal E}^1(X,\omega)$
implies that $u,v\in{\mathcal E}^1({\Bbb P}^1,\omega')$.
\vskip.2cm

\par $(ii)$ {\em and} $(iii)$.
The equivalence  $\varphi\in{\mathcal E}(\omega,\omega)\Longleftrightarrow
u,v\in{\mathcal E}({\Bbb P}^1,\omega')$ is a direct consequence of the following equality:
$$\int_{\{\varphi=-\infty\}}\omega_\varphi\wedge\omega=
\int_{\{u=-\infty\}}\omega_{1,u}+\int_{\{v=-\infty\}}\omega_{2,v}.$$

\par We show next that $u,v\in{\mathcal E}({\Bbb P}^1,\omega')\Longrightarrow
\varphi\in{\mathcal E}(X,\omega)$. Let $\varphi_j,u_j,v_j$ be the canonical approximants
and set $E_j=\{u>-j\}\cap\{v>-j\}$. Since the bounded $\omega$-psh functions
$\varphi_{2j}$ and $u_j+v_j$ coincide
on the plurifine open set $E_j$, we have by \cite{BT87} that
$${\bf 1}_{E_j}\omega_{\varphi_{2j}}^2=2\cdot{\bf 1}_{E_j}\omega_{u_j}\wedge\omega_{v_j}=
2\cdot{\bf 1}_{\{u>-j\}}\omega_{u_j}\wedge{\bf 1}_{\{v>-j\}}\omega_{v_j}.$$ Note that the product measure
$\omega_u\wedge\omega_v$ puts full mass 2 on the set $\{u>-\infty\}\cap\{v>-\infty\}$, and that
${\bf 1}_{\{\varphi>-2j\}}\omega_{\varphi_{2j}}^2\geq{\bf 1}_{E_j}\omega_{\varphi_{2j}}^2$. This shows
that the sequence of measures ${\bf 1}_{\{\varphi>-2j\}}\omega_{\varphi_{2j}}^2$ increases to a measure
with total mass 2, hence $\varphi\in{\mathcal E}(X,\omega)$.

\par We conclude the proof by showing that $\varphi\in DMA(X,\omega)$ implies that
$u,v\in{\mathcal E}({\Bbb P}^1,\omega')$. Assume for a contradiction that
$v\not\in{\mathcal E}({\Bbb P}^1,\omega')$. We may assume that there exists a compact
$K\subset\{v=-\infty\}\cap\{[1:w]:\,w\in{\Bbb C}\}$ so that
$$
\int_Kdd^c V=a>0\;,\;\;V(w):=\log\sqrt{1+|w|^2}+v([1:w]).
$$
We use here (bi)homogeneous coordinates $[z_0:z_1], [w_0:w_1]$ on
$X$. Let $u_j$ be smooth $\omega'$-psh functions decreasing to $u$
on ${\Bbb P}^1$, and set
\begin{eqnarray*}
U_j(z)&=&\log\sqrt{1+|z|^2}+u_j([1:z]),\\
\Phi_j(z,w)&=&\max(U_j(z)+V(w),\log|z-\zeta|-j),\end{eqnarray*}
where $\zeta\in{\Bbb C}$. The functions $\Phi_j$ yield functions
$\varphi_j\in DMA(X,\omega)$ decreasing to $\varphi$, hence
$(dd^c\Phi_j)^2\to\omega_\varphi^2$ on ${\Bbb C}^2\subset X$. We
will show that
$$\int_{\{z=\zeta\}\times K}\omega_\varphi^2\geq a.$$ Since $\zeta$ is arbitrary, we get a contradiction.

\par For $r>0$ let $\chi_1\geq0$ be a smooth function such that $\chi_1(z)=1$ in the closed disc $E_r$ of
radius $r$ centered at $\zeta$ and $\chi_1$ is supported in the disc $D_r$ of radius $2r$ centered at
$\zeta$. For fixed $j$ let $N$ be an open neighborhood of $K$ so that
$V(w)<\log r-j-\max_{D_r}U_j$ for $w\in N$, and let $\chi_2\geq0$ be a smooth function supported in $N$
such that $\chi_2(w)=1$ on $K$. Let $\chi(z,w)=\chi_1(z)\chi_2(w)$. Since $dd^c\chi\wedge dd^c\Phi_j$ is
supported on the open set $\{U_j(z)+V(w)<\log|z-\zeta|-j\}$ it follows that
\begin{eqnarray*}
\int\chi\,(dd^c\Phi_j)^2&=&\int(U_j+V)\,dd^c\chi\wedge dd^c\Phi_j\\
&\geq&\int\chi\,dd^cV\wedge dd^c\Phi_j=\int\Phi_j\chi_2\,dd^c\chi_1(z)\wedge dd^cV(w).
\end{eqnarray*}
Note that
$$
U_j(z)+V(w)<\log r-j<\log|z-\zeta|-j
$$
on the support of $\chi_2dd^c\chi_1$, thus
\begin{eqnarray*}
\int\chi\,(dd^c\Phi_j)^2&\geq&\int(\log|z-\zeta|-j)\chi_2\,dd^c\chi_1\wedge dd^cV\\
&=&\int\chi\,dd^c\log|z-\zeta|\wedge dd^cV\geq\int_K dd^cV=a.
\end{eqnarray*}
We conclude that $$\int_{E_r\times K}\omega_\varphi^2\geq
\limsup_{j\to\infty}\int_{E_r\times K}(dd^c\Phi_j)^2\geq a,$$ and as $r\rightarrow0$, that
$\int_{\{z=\zeta\}\times K}\omega_\varphi^2\geq a.$
\end{proof}

\subsection{The case $X={\Bbb P}^2$.}\label{SS:P2}

We produce now similar examples in the case of $X=\P^2$ with
$\om$ the Fubini-Study form.
Let $[t:z:w]$ denote the homogeneous coordinates and $\f$ be a $\om$-psh
function with Lelong number 1 at point $p=[1:0:0]$.
It is easy to see that $\f$ can be written as
\begin{equation} \label{e:P2example}
\f[t:z:w]=\frac{1}{2} \log \frac{|z|^2+|w|^2}{|t|^2+|z|^2+|w|^2} +u[z:w]
\end{equation}
where $u$ is a $\om'$-psh function on $\{t=0\}\simeq \P^1$.

\begin{pro}
If $u \notin {\mathcal E}(\P^1,\om)$ then $\f \notin DMA(\P^2,\om)$.
\end{pro}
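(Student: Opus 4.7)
The plan is to mimic the proof of Proposition \ref{P:energyP1}(iii), transported from the product setting $\P^1 \times \P^1$ to the blown-down setting $\P^2$. Argue by contradiction, assuming $\f \in DMA(\P^2,\om)$ and $u \notin \mathcal{E}(\P^1,\om')$, where $\om'$ is the Fubini--Study form on $\P^1$. The failure $u \notin \mathcal{E}(\P^1,\om')$ means that $\om'_u$ charges $\{u = -\infty\}$, so one can extract a compact set $K$ in a suitable affine chart $\mu \in \C$ of $\P^1$, with $K \subset \{u = -\infty\}$, $0 \notin K$ (after translation if needed), and
\[
a := \int_K dd^c U > 0, \qquad U(\mu) := \tfrac{1}{2}\log(1+|\mu|^2) + u([\mu:1]).
\]

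Work in the affine chart $\{t = 1\}$ of $\P^2$ with coordinates $(z,w) = (t_1/t, t_2/t)$, so that $p = (0,0)$ and $\pi(z,w) = z/w$ is defined where $w \neq 0$. For every $\zeta \in \C^*$, the line $\ell_\zeta := \{z=\zeta\}$ avoids $p$, and
\[
\ell_\zeta \cap \pi^{-1}(K) \; = \; \{(\zeta, \zeta/\lambda) : \lambda \in K\}
\]
is a copy of $K$ sitting on $\ell_\zeta$; crucially, these compact sets are pairwise disjoint as $\zeta$ varies over $\C^*$. Pick smooth $\om'$-psh approximants $u_j \searrow u$ on $\P^1$ (see \cite{BK07}), let $\f^{(j)}$ be obtained from formula (\ref{e:P2example}) by substituting $u_j$ for $u$, and let $g_{\ell_\zeta}$ be an $\om$-psh potential with $\om + dd^c g_{\ell_\zeta} = [\ell_\zeta]$. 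Set
\[
\Phi_j^\zeta := \max\bigl(\f^{(j)}, g_{\ell_\zeta} - j\bigr) \; \searrow \; \f.
\]
Since the only $(-\infty)$-locus of $\f^{(j)}$ is $\{p\}$ and $g_{\ell_\zeta}$ is smooth near $p$, the max equals $g_{\ell_\zeta} - j$ near $p$, so for each $j$ the function $\Phi_j^\zeta$ is bounded $\om$-psh on $\P^2$, hence in $DMA(\P^2,\om)$. The assumption $\f \in DMA(\P^2,\om)$ then forces $(\om + dd^c \Phi_j^\zeta)^2 \to \om_\f^2$ weakly, with a $\zeta$-independent limit.

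The core step, to be extracted exactly as in Proposition \ref{P:energyP1}(iii), is the uniform lower bound: for any smooth $\chi \geq 0$ supported in a small neighborhood $V_\zeta$ of $\ell_\zeta \cap \pi^{-1}(K)$ and equal to $1$ there,
\[
\liminf_{j \to \infty} \int_{\P^2} \chi \,(\om+dd^c\Phi_j^\zeta)^2 \;\geq\; a.
\]
Indeed, in the chart $w \neq 0$ a local $\om$-psh potential of $\f^{(j)}$ is $U_j(z/w) + \log|w|$, and one for $g_{\ell_\zeta}$ is $\log|z-\zeta|$ modulo a smooth term; the local potential of $\Phi_j^\zeta$ is the max of these two, structurally identical to the max $\max(U_j(z)+V(w),\log|z-\zeta|-j)$ used in the product case. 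The contact-set / integration-by-parts calculation of Proposition \ref{P:energyP1}(iii) then produces the lower bound $\int \chi\, dd^c U(z/w) \wedge dd^c \log|z-\zeta| = \int_K dd^c U \geq a$ by slicing in $\{z=\zeta\}$. Passing to the weak limit gives $\om_\f^2(V_\zeta) \geq a$ for every $\zeta \in \C^*$; shrinking the $V_\zeta$ to be pairwise disjoint, the uncountable disjoint family $\{V_\zeta\}$ contradicts the finite mass $\om_\f^2(\P^2) = \int_{\P^2}\om^2 = 1$.

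The main obstacle is rigorously transporting the contact-set Monge--Amp\`ere calculation from $\P^1 \times \P^1$ to the $\P^2$ chart. The additional $\log|w|$ term in the local potential of $\f^{(j)}$, absent from the product setting and responsible for the Lelong number $1$ of $\f$ at $p$, must not spoil the lower bound. Because $0 \notin K$, the sets $\ell_\zeta \cap \pi^{-1}(K)$ lie inside $\{w \neq 0\}$, where $\log|w|$ is smooth and contributes only lower-order correction terms to every integration by parts. With this observation the remainder of the argument is a direct transcription of the product-case proof.
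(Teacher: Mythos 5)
Your reduction to the lower bound $\liminf_j\int\chi\,(\om+dd^c\Phi_j^\zeta)^2\geq a$ is exactly where the argument breaks, and the reason is that you have smoothed the singular function $u$ \emph{inside} the max. In the proof of Proposition \ref{P:energyP1}(iii) the two entries of the max play asymmetric roles: the harmless factor is replaced by smooth approximants $U_j$, but the singular potential $V$ --- the one carrying the mass $a=\int_K dd^cV$ on $K\subset\{V=-\infty\}$ --- is kept as it is. This is what makes it possible to choose, for each $j$, a neighbourhood $N$ of $K$ on which $V<\log r-j-\max_{D_r}U_j$, so that $\Phi_j$ coincides with the branch $\log|z-\zeta|-j$ on an annulus times $N$ and the contact--set computation produces the intersection mass $[z=\zeta]\wedge dd^cV$. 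In your $\max\bigl(U_j(z/w)+\log|w|,\log|z-\zeta|-j\bigr)$ the analogue of $V$ is $U$, and you have replaced it by the smooth $U_j$: the measure $dd^cU$, which is supposed to supply the mass $a$, does not occur anywhere in the formula for $\Phi_j^\zeta$, and $U_j$ is bounded below near $K$ by a finite constant that need not be $\ll -j$. Your dictionary ``$\log|w|\leftrightarrow V(w)$'' pairs a pluriharmonic function with the singular potential of the product case, so the claimed structural identity is only apparent; the $\log|w|$ term is indeed harmless, but it was never the issue.

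Here is a concrete failure of the core step. Take $\om'_u=\delta_{\mu_0}$ with $\mu_0\neq0$, so that $u\notin{\mathcal E}(\P^1,\om')$, $K=\{\mu_0\}$, $a=1$ and $\om_\f=[L_0]$ with $L_0=\pi^{-1}(\mu_0)$, and take $U_j=\frac12\log\bigl(|\mu-\mu_0|^2+e^{-2j}\bigr)$ plus the usual smooth normalization. Up to bounded smooth terms the two branches are $A_j=\frac12\log\bigl(|z-\mu_0w|^2+e^{-2j}|w|^2\bigr)$ and $B_j=\log|z-\zeta|-j$. On a ball $B(q_\zeta,\rho)$ around $q_\zeta=\ell_\zeta\cap\pi^{-1}(K)=(\zeta,\zeta/\mu_0)$ one has $|w|\geq|\zeta|/(2|\mu_0|)$, hence $A_j\geq -j+\log\bigl(|\zeta|/(2|\mu_0|)\bigr)$ while $B_j\leq\log\rho-j$; for $\rho$ small enough (absorbing the bounded corrections) this gives $A_j>B_j$ on the whole ball \emph{for every} $j$. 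Thus $\Phi_j^\zeta\equiv\f^{(j)}$ there, and $(\om+dd^c\f^{(j)})^2$ vanishes off $p$ because its local potential $U_j(z/w)+\log|w|$ is a pullback from $\P^1$ plus a pluriharmonic function. So $\int_{B(q_\zeta,\rho)}(\om+dd^c\Phi_j^\zeta)^2=0$ for all $j$: no mass ever reaches $\ell_\zeta\cap\pi^{-1}(K)$; in this example it spreads along $L_0$ instead. The paper avoids this by never approximating $u$ inside the max: it first notes that the sequence built from \emph{bounded} $u_j\searrow u$ has Monge--Amp\`ere measure $\delta_p$, forcing $\om_\f^2=\delta_p$, and then uses the single sequence $\p_j=\max(\f,g-j)$ with $g$ a potential of the line at infinity $\{t=0\}$, on which $\f$ restricts to $u$, to get $\om_{\p_j}^2(K)\geq a$ for a compact $K\subset\{t=0\}\cap\{u=-\infty\}$ --- a contradiction since $p\notin\{t=0\}$. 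Your pencil of lines $\ell_\zeta$ and the disjointness trick could be salvaged, but only with $\max(\f,g_{\ell_\zeta}-j)$, the singular $u$ retained, not with $\max(\f^{(j)},g_{\ell_\zeta}-j)$.
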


\begin{proof}
Suppose $\f \in DMA(\P^2,\om)$ and let $\f_j$ be functions defined
by (\ref{e:P2example}) with $u$ replaced by $u_j$, where $u_j$ are
bounded $\om$-psh on $\P^1$ decreasing to $u$. Then $\f_j$
decreases to $\f$ and $\om_{\f_j}^2=\d_p$ is the Dirac mass at
$p$, hence $\om_\f^2=\d_p$.

On the other other hand, we are going to construct another sequence of
functions $\p_j \in DMA(\P^2,\om)$ decreasing to $\f$ such that
$\om_{\p_j}^2$ does not converge to $\d_p$.
Let $K\subset\{t=0\}$ be a compact so that $u=-\infty$ on $K$ and $\om_u(K)=a>0$, and let
$$
\p_j([t:z:w])=\max\left(\f([t:z:w]),\log|t|-\frac{1}{2}\,\log(|t|^2+|z|^2+|w|^2)-j\right).
$$
Then $\p_j\in DMA_{loc}(\P^2,\om)$ by Proposition \ref{P:div}. One
can show, as in the proof of Proposition \ref{P:energyP1}, that
$\om_{\p_j}^2(K)\geq a$, for all $j$. This contradicts that
$\om_\f^2=\d_p$.
\end{proof}

For functions $\f$ as in (\ref{e:P2example}), it is easy to show
that $\f \in DMA_{loc}(\P^2,\om)$ if and only if $u \in
W^{1,2}(\P^1)$. It is an interesting question whether $\f \in
DMA(\P^2,\om)$ if $u \in {\mathcal E}(\P^1,\om')$. A concrete
example is
$$
\f_{\a}[t:z:w]:=\frac{1}{2} \log \frac{|z|^2+|w|^2}{|t|^2+|z|^2+|w|^2}
-\left[ 1- \frac{1}{2} \log \frac{|z|^2}{|z|^2+|w|^2} \right]^{\a},
$$
where $0<\a<1$.
Then $\f_{\a} \notin {\mathcal E}(\P^2,\om)$ since it has positive Lelong number
at $p$, and $\f_{\a} \notin DMA_{loc}(\P^2,\om)$ if $\a \geq 1/2$.
It would be of interest to know if $\f_{\a} \in DMA(\P^2,\om)$
for some $\a \in [1/2,1]$.

\subsection{A candidate?}

Previous examples indicate that in dimension $n=2$, the class
$$
{\mathcal E}(\om,\om):=\left\{ \f \in PSH(X,\om) \, / \, (\om+dd^c
\f)(\{\f=-\infty\})=0 \right\}
$$
plays a central role. Note that it enjoys several interesting properties:
\begin{itemize}
\item ${\mathcal E}(\om,\om)$ is convex and stable under maximum;
\item ${\mathcal E}(\om,\om)={\mathcal E}(\om',\om') \cap PSH(X,\om)$ whenever $\om \leq \om'$;
\item $DMA_{loc}(X,\om) \subset {\mathcal E}^1(\om,\om) \subset {\mathcal E}(\om,\om)$;
\item ${\mathcal E}(X,\om) \subset {\mathcal E}(\om,\om)$.
\end{itemize}

Together with the special examples analyzed in sections
\ref{SS:P1P1} and \ref{SS:P2}, this motivates the following:

\begin{ques}
Assume $n=\dim_{\C} X=2$.

\noindent Do we have $DMA(X,\om) \subset {\mathcal E}(\om,\om)$
and/or ${\mathcal E}(\om,\om) \subset DMA(X,\om)$?
\end{ques}


\begin{thebibliography}{XXXX}

\bibitem[BT1]{BT76} E. BEDFORD and B. A. TAYLOR: The Dirichlet problem
for a complex Monge-Amp\`ere equation. Invent. Math. {\bf 37}
(1976), no. 1, 1--44.

\bibitem[BT2]{BT78} E. BEDFORD and B. A. TAYLOR: Variational properties
of the complex Monge-Amp\`ere equation. I. Dirichlet principle.
Duke Math. J. {\bf 45} (1978), no. 2, 375--403.

\bibitem[BT3]{BT82} E. BEDFORD and B. A. TAYLOR: A new capacity for
plurisubharmonic functions. Acta Math. {\bf 149} (1982), no. 1-2,
1--40.

\bibitem[BT4]{BT87} E. BEDFORD and B. A. TAYLOR: Fine topology,
\v{S}ilov boundary, and $(dd^c)^n$. J. Func. Anal. {\bf 72}
(1987), no. 2, 225--251.

\bibitem[Bl1]{Bl04} Z. BLOCKI: On the definition of the Monge-Amp\`ere
operator in $\C^2$. Math. Ann. {\bf 328} (2004), no 3, 415-423.

\bibitem[Bl2]{Bl06} Z. BLOCKI: The domain of definition of the complex
Monge-Amp\`ere operator. Amer. J. Math. {\bf 128} (2006), no. 2,
519--530.

\bibitem[BK]{BK07} Z. BLOCKI and S. KOLODZIEJ: On regularization
of plurisubharmonic functions on manifolds. Proc. Amer. Math. Soc.
{\bf 135} (2007), 2089--2093.

\bibitem[C1]{Ce98} U. CEGRELL: Pluricomplex energy. Acta Math. {\bf 180}
(1998), no. 2, 187--217.

\bibitem[C2]{Ce04} U. CEGRELL: The general definition of the complex
Monge-Amp\`ere operator. Ann. Inst. Fourier (Grenoble)  {\bf 54}
(2004),  no. 1, 159--179.

\bibitem[D1]{De92} J.-P. DEMAILLY: Regularization of closed positive
currents and intersection theory. J. Algebraic Geom.  {\bf 1}
(1992),  no. 3, 361--409.

\bibitem[D2]{De93} J.-P. DEMAILLY: Monge-Amp\`ere operators, Lelong
numbers and intersection theory. Complex analysis and geometry,
115--193, Univ. Ser. Math., Plenum, New York (1993).

\bibitem[EGZ]{EGZ} P. EYSSIDIEUX, V. GUEDJ and A. ZERIAHI: Singular
K\"ahler-Einstein metrics. Preprint arXiv math.AG/0603431.

\bibitem[FS]{FS95} J. E. FORN\AE SS and N. SIBONY: Oka's
inequality for currents and  applications. Math. Ann. {\bf 301}
(1995), no. 3, 399-419.

\bibitem[GZ1]{GZ05} V. GUEDJ and A. ZERIAHI: Intrinsic capacities on
compact K\"ahler manifolds. J. Geom. Anal. {\bf 15} (2005), no. 4,
607-639.

\bibitem[GZ2]{GZ2} V. GUEDJ and A. ZERIAHI: The weighted Monge-Amp\`ere
energy of quasiplurisubharmonic functions. J. Func. Anal. (2007), to
appear.

\bibitem[H\"o]{Ho} L. H\"ORMANDER: Notions of convexity. Progress in
Mathematics, 127. Birkh\"auser Boston, Inc., Boston, MA, (1994)
viii+414 pp.

\bibitem[K1]{Ko98} S. KOLODZIEJ: The complex Monge-Amp\`ere equation.
Acta Math. {\bf 180} (1998), no. 1, 69--117.

\bibitem[K2]{Ko03} S. KOLODZIEJ: The Monge-Amp\`ere equation on compact
K\"ahler manifolds. Indiana Univ. Math. J.  {\bf 52}  (2003),  no.
3, 667--686.

\bibitem[Sib]{Si85} N. SIBONY: Quelques probl\`emes de prolongement de
courants en analyse complexe. Duke Math. J.  {\bf 52}  (1985),
no. 1, 157--197.

\bibitem[Siu]{Siu74} Y. T. SIU: Analyticity of sets associated to
Lelong numbers and the extension of closed positive currents.
Invent. Math. {\bf 27} (1974), 53--156.

\bibitem[T]{Ti} G. TIAN: Canonical metrics in K\"ahler geometry.
Lectures in Mathematics ETH Z\"urich. Birkh\"auser Verlag, Basel
(2000).

\bibitem[X]{X} Y.XING: The general definition of the complex
Monge-Amp\`ere operator on compact K\"ahler manifolds. Preprint
arXiv:0705.2099.

\bibitem[Y]{Yau} S. T. YAU: On the Ricci curvature of a compact K\"ahler
manifold and the complex Monge-Amp\`ere equation. I. Comm. Pure
Appl. Math. {\bf 31} (1978), no. 3, 339--411.

\end{thebibliography}
\end{document}